\newtheorem{proposition}{Proposition}[section]
\newtheorem{theorem}{Theorem}[section]
\newtheorem{lemma}[proposition]{Lemma}
\newtheorem{remark}{Remark}[section]
\newtheorem{corollary}[theorem]{Corollary}
\numberwithin{equation}{section}
\title{
Uniqueness and nondegeneracy of ground states to nonlinear scalar field equations 
involving the Sobolev critical exponent in their nonlinearities for high frequencies
}
\author{Takafumi Akahori, Slim Ibrahim, Norihisa Ikoma, Hiroaki Kikuchi, Hayato Nawa}
\date{\today}
\newcommand{\Go}{\mathcal{G}_\omega}
\newcommand{\Gon}{\mathcal{G}_{\omega_n}}
\newcommand{\TGo}{\widetilde{\mathcal{G}}_\omega}
\newcommand{\TGon}{\widetilde{\mathcal{G}}_{\omega_n}}
\newcommand{\R}{\mathbb{R}}
\newcommand{\Rd}{\mathbb{R}^d}
\newcommand{\Pho}{\Phi_\omega}
\newcommand{\Phon}{\Phi_{\omega_n}}
\newcommand{\TPho}{\widetilde{\Phi}_\omega}
\newcommand{\TPhon}{\widetilde{\Phi}_{\omega_n}}
\newcommand{\TPhnone}{\widetilde{\Phi}_{n,1}}
\newcommand{\TPhntwo}{\widetilde{\Phi}_{n,2}}
\newcommand{\dH}{\dot{H}^1}
\newcommand{\dHRd}{\dot{H}^1(\Rd)}
\newcommand{\e}{\varepsilon}
\def\wt#1{\widetilde{#1}}
\newcommand{\Hrad}{H^1_{\rm rad}}
\begin{document}
\maketitle

\begin{abstract}
The study of the uniqueness and nondegeneracy of ground state solutions to 
semilinear elliptic equations is of great importance because of the resulting energy landscape and its implications 
for the various dynamics. In \cite{AIKN3}, semilinear elliptic equations with combined power-type nonlinearities involving the Sobolev critical exponent are studied. There, it is shown that if the dimension is four or higher, 
and the frequency is sufficiently small, then the positive radial ground state is unique and nondegenerate. 
In this paper, we extend these results to the case of high frequencies when the dimension is five and higher.
After suitably rescaling the equation, we demonstrate that the main behavior of the solutions is given by the Sobolev critical part for which the ground states are explicit, and their degeneracy is well characterized. Our result is a key step towards the study of the different dynamics of solutions of the corresponding nonlinear Schr\"odinger and Klein-Gordon equations with energies above the energy of the ground state. Our restriction on the dimension is mainly due to the existence of resonances in dimension three and four. 
\end{abstract}

\section{Introduction}\label{section:1}

In this paper, we investigate the uniqueness and nondegeneracy of ground state 
to the semilinear elliptic equation of the form 
\begin{equation}\label{eq:1.1} 
  - \Delta u + \omega u
=
|u|^{p-1}u+|u|^{\frac{4}{d-2}}u 
\quad 
\mbox{in $\mathbb{R}^{d}$}, \quad u \in H^1(\Rd) :=H^1(\Rd, \mathbb{C})
\end{equation}
where $d\ge 3$, $\omega >0$ and $1< p < \frac{d+2}{d-2} = : 2^\ast - 1$. 
Here we call $u$ a \emph{ground state} to \eqref{eq:1.1} 
provided $u$ is a nontrivial solution to \eqref{eq:1.1} satisfying 
	\[
		\mathcal{S}_\omega (u) 
		= \inf \left\{ \mathcal{S}_\omega(v) : 
		\text{$v$ is a nontrivial solution to \eqref{eq:1.1}} \right\}
	\]
where the action $S_\omega$ is defined by
\begin{equation}\label{eq:1.2}
\mathcal{S}_{\omega}(u)
:=\frac{1}{2}\|\nabla u \|_{L^{2}}^{2}
+
\frac{\omega}{2}\|u\|_{L^{2}}^{2}
-
\frac{1}{p+1}\|u\|_{L^{p+1}}^{p+1}
-
\frac{1}{2^{*}}\|u\|_{L^{2^{*}}}^{2^{*}}.
\end{equation}
Observe that critical points of $\mathcal{S}_\omega$ solve \eqref{eq:1.1}. 
In addition, a solution $u$ to \eqref{eq:1.1} is said to be \emph{nondegenerate in $\Hrad(\Rd)$} 
when the linearized equation of \eqref{eq:1.1} at $u$ 
	\[
		-\Delta \varphi + \omega \varphi = 
		p |u|^{p-1} \varphi + \frac{d+2}{d-2} |u|^{\frac{4}{d-2}} \varphi 
		\quad {\rm in}  \ \Rd, \quad \varphi \in 
		\Hrad (\Rd) := \{ u \in H^1(\Rd) : \text{$u$ is radial} \} 
	\]
has the trivial function $\varphi\equiv0$ as its unique solution. That is
	\[
		{\rm Ker}\, L_u|_{\Hrad(\Rd)} = \left\{ 0 \right\}
	\]
where
$L_u : H^1(\Rd) \to (H^1(\Rd))^\ast $ is defined by 
	\begin{equation}\label{eq:1.3}
				L_u := -\Delta + \omega 
				- p |u|^{p-1} - \frac{d+2}{d-2} |u|^{\frac{4}{d-2}}. 
	\end{equation}
Equation \eqref{eq:1.1} appears in studying standing wave solutions 
for the following nonlinear Schr\"odinger equation and Klein-Gordon equation:
\begin{align}
\label{eq:1.4} 
i \frac{\partial \psi}{\partial t}  - \Delta \psi 
=
|\psi|^{p-1}\psi+|\psi|^{\frac{4}{d-2}}\psi ,
\\[6pt]
\label{eq:1.5} 
\frac{\partial^{2} \psi}{\partial t^{2}}  - \Delta \psi + m \psi
=
|\psi|^{p-1}\psi+|\psi|^{\frac{4}{d-2}}\psi
. 
\end{align}
More precisely, when we look for solutions of the form $\psi(t,x) = e^{-i \lambda t} u(x)$ 
($\lambda \in \R$), we observe that \eqref{eq:1.4} or \eqref{eq:1.5} is equivalent 
to solving \eqref{eq:1.1} with the choices $\lambda=\omega$ and $\lambda = \pm \sqrt{m-\omega}$, $\omega<m$. 

Due to the presence of multiple powers in \eqref{eq:1.1},  \eqref{eq:1.4} or \eqref{eq:1.5}, these equations loose their scaling invariances and thus a main interest in studying them is to explore the implications of such perturbations, in particular the emergence of ground state solitary waves, the resulting energy landscape, 
and its role for the global dynamics.

Recently, the dynamics for \eqref{eq:1.4} and \eqref{eq:1.5} were intensively studied. When the energy of initial data is less than the ground state energy, only two scenarios can happen: finite time blow-up or scattering. For example, we refer to  \cite{Akahori-Ibrahim-Kikuchi-Nawa1, AIKN3, Kenig-Merle, Killip-Oh-Poco-Visan, MXZ}. However, when the energy of initial data is slightly greater than the ground state energy, the dynamic is much more complicated, and the combination of finite time blow-up, scattering and non-dispersion behaviors are shown in forward or backward in time. We refer to \cite{AIKN3,Nakanishi-Schlag1, Nakanishi-Schlag2} for more details.
In studying the dynamics around the ground state, 
basic properties of ground state such as the uniqueness and nondegeneracy 
play a crucial role. 
This is our main motivation for the present paper.

	On the other hand, the uniqueness and nondegeneracy of positive solutions have 
applications to the study of nonlinear elliptic equations. 
For instance, let us consider the following singular perturbation problem
	\begin{equation}\label{eq:1.6}
		-\e^2 \Delta v + V(x) v = f(v) \quad {\rm in} \ \Rd
	\end{equation}
where $V(x): \Rd \to \R$ and $f: \R \to \R$ are given functions and 
$0 < \e \ll 1$ a parameter. When we try to find spike solutions to \eqref{eq:1.6} 
concentrating at some point $x_0 \in \Rd$, the uniqueness and nondegeneracy of positive solutions to 
	\[
		-\Delta u + V(x_0) u = f(u) \quad {\rm in} \ \Rd
	\]
are keys in order to apply the Lyapunov--Schmidt reduction method. 
For instance, see \cite{FW-86,O-88,AM}. 
Since it suffices to consider positive ground states due to Proposition \ref{proposition:1.2} for \eqref{eq:1.1}, 
the uniqueness and nondegeneracy of ground states to \eqref{eq:1.1} 
is regarded as a step toward those of positive solutions to \eqref{eq:1.1}. 
Therefore, to study those properties is fundamental and important.

In the case of a single power nonlinearity, and in his celebrated paper \cite{Kwong}, Kwong proved 
the uniqueness and nondegeneracy of positive radial solutions to 
	\begin{equation}\label{eq:1.7}
		-\Delta u + u = u^p \quad {\rm in} \ \Rd, \quad u(x) \to 0 \quad {\rm as} \ |x| \to \infty
	\end{equation}
where $d \geq 1$ and $ 1 < p < 2^\ast - 1$. 
For generalizations and related results, 
we refer to \cite{CL-91,C,KZ91,MS,PS, ST} and references therein. 
Here it is important to mention that it is not simple to apply those results for \eqref{eq:1.1} except for 
some particular cases. 
In fact, to the best of our knowledge, Pucci and Serrin \cite{PS} studied 
the uniqueness of radial positive solutions to 
	\[
		\Delta u + f(u) = 0 \quad \mbox{in $\mathbb{R}^{d}$}, 
		\quad u(x) \to 0 \quad {\rm as} \ |x| \to \infty
	\]
and treated a general nonlinearity $f(u)$. 
We will see in Appendix \ref{section:C} that 
the result of \cite{PS} can be applied to \eqref{eq:1.1} 
when $3 \leq d \leq 6$ and $2^\ast - 2 \leq p < 2^\ast - 1$ with $1<p$, and 
unfortunately, not in the case $d \geq 7$ and $\omega \gg 1$. 
See Proposition \ref{proposition:C.1} and Remark \ref{remark:C.1}, 
and for other cases, we do not know whether or not 
the result of \cite{PS} can be applied. 
Furthermore, the nondegeneracy of radial solutions is not treated in \cite{PS}.

	In addition, the uniqueness of radial positive solutions to \eqref{eq:1.1} is delicate according to \cite{DdPG}. 
In \cite{DdPG}, D\'avila, del Pino and Guerra gave a numerical result 
which shows that the uniqueness of positive solutions to \eqref{eq:1.1} fails 
for $d=3$, $1<p<2^\ast - 2$ and $\omega \ll 1$.

	About the uniqueness and nondegeneracy of ground states to \eqref{eq:1.1}, 
these properties were proved in \cite{AIKN3} 
under the assumptions $d\ge 4$, $1+\frac{4}{d}<p<2^\ast - 1$ and 
$\omega \ll 1$. 
We also mention that the papers \cite{Grossi,GLP} studied  
the uniqueness and nondegeneracy of ground states 
to equations in bounded domains with single power type nonlinearity 
whose exponent is the critical one or close to it.

	Recently, Coles and Gustafson \cite{ColesGustafson} showed the uniqueness of 
the ground state to \eqref{eq:1.1} when $d=3$, $3<p<5$ and $\omega \gg 1$. 
For more precise statement, see Remark \ref{remark:1.2}. 
Here we also note that they also study the dynamics 
of the perturbed critical nonlinear Schr\"odinger equation.

From the above observations, our aim in this paper is to address the uniqueness and nondegeneracy of ground state 
to \eqref{eq:1.1} for $\omega \gg 1$ in the higher dimensional case. To state our result more precisely, 
we first recall the existence of ground state to \eqref{eq:1.1}:

\begin{proposition}[\cite{ASM, Zhang-Zou} (cf. \cite{Brezis-Nirenberg})]\label{proposition:1.1}
Assume either $d = 3$ and $3<p<5$ or else $d\ge4$ and $1<p<\frac{d+2}{d-2}$. 
Then, for any $\omega >0$ there exists a ground state to \eqref{eq:1.1}. 
\end{proposition}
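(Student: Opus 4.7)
My plan is to obtain the ground state as a minimizer on the Nehari manifold
\[
\mathcal{N}_\omega := \bigl\{ u \in H^1(\Rd) \setminus \{0\} : \langle \mathcal{S}_\omega'(u), u \rangle = 0 \bigr\},
\]
and show that the minimum
\[
m_\omega := \inf_{u \in \mathcal{N}_\omega} \mathcal{S}_\omega(u)
\]
is attained. Standard manipulations give $m_\omega > 0$ and show that any critical point of $\mathcal{S}_\omega|_{\mathcal{N}_\omega}$ is in fact a critical point of $\mathcal{S}_\omega$ on $H^1(\Rd)$, so a minimizer would be a ground state in the sense of the paper. By the classical Schwarz symmetrization argument applied to a minimizing sequence, I may restrict the minimization to nonnegative radial functions, which compactifies the action of translations and yields boundedness in $H^1(\Rd)$ for any minimizing sequence.

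The core difficulty is the loss of compactness caused by the Sobolev critical term $|u|^{4/(d-2)}u$: even restricted to radial functions, concentration at the origin is not prevented by symmetry. The standard remedy, which I would follow here, is the Brezis--Nirenberg threshold argument: one shows
\[
0 < m_\omega < \frac{1}{d} S^{d/2},
\]
where $S$ is the sharp Sobolev constant in $\dHRd$, by plugging the appropriately truncated Aubin--Talenti bubble $U_\varepsilon(x) = \varepsilon^{-(d-2)/2} U(x/\varepsilon)$ (with $U$ the Talenti extremizer) into $\mathcal{S}_\omega$ along the Nehari scaling, and expanding as $\varepsilon \to 0$. The leading order produces $\frac{1}{d} S^{d/2}$; the critical negative contribution comes from the subcritical term $-\frac{1}{p+1}\|U_\varepsilon\|_{L^{p+1}}^{p+1}$, while the $L^2$ term contributes a positive error. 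The strict inequality requires this subcritical gain to dominate, and this is where the hypotheses on $(d,p)$ enter: for $d \geq 4$ and every admissible $p$ the bound holds, while for $d=3$ the slow decay of $U$ forces the restriction $3 < p < 5$ to ensure $\|U_\varepsilon\|_{L^{p+1}}^{p+1}$ beats the $L^2$ error of order $\varepsilon$ as $\varepsilon \to 0$. These are, in essence, the computations of Brezis--Nirenberg and of \cite{ASM, Zhang-Zou}.

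Having the level estimate in hand, I would conclude as follows. Take a radial, nonnegative minimizing sequence $\{u_n\} \subset \mathcal{N}_\omega$; it is bounded in $H^1(\Rd)$ and, up to extraction, converges weakly to some $u_\infty \in \Hrad(\Rd)$. The strict sublevel bound together with Brezis--Lieb / Struwe-type profile decomposition shows that no bubble can split off: if $u_n - u_\infty$ carried nontrivial $\dH$-mass, the resulting action would exceed $\frac{1}{d} S^{d/2} > m_\omega$. Therefore $u_\infty \neq 0$ and $u_n \to u_\infty$ strongly modulo a rescaled bubble, which is excluded, so $u_\infty$ is a nontrivial radial nonnegative solution with $\mathcal{S}_\omega(u_\infty) = m_\omega$. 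Finally one checks that the Nehari minimum coincides with the infimum of $\mathcal{S}_\omega$ over all nontrivial solutions, so $u_\infty$ is a ground state. The main obstacle throughout is exactly the level estimate in the borderline dimension $d=3$; everything else is routine once that estimate is secured.
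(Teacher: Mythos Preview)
Your proposal is correct and follows essentially the same route as the paper's Appendix~\ref{section:A}: Nehari minimization, Schwarz symmetrization of a minimizing sequence, the Brezis--Nirenberg threshold $m_\omega < \frac{1}{d}\sigma^{d/2}$ established via truncated Aubin--Talenti bubbles (with the $(d,p)$ hypotheses entering exactly where you say), and then a Brezis--Lieb splitting argument to rule out loss of compactness below that level. The only cosmetic difference is that the paper works with the auxiliary positive functional $\mathcal{I}_\omega = \mathcal{S}_\omega - \frac{1}{p+1}\mathcal{N}_\omega$ and the relaxed constraint $\mathcal{N}_\omega(u)\le 0$, which slightly streamlines the verification that the weak limit lies on the Nehari manifold; your profile-decomposition phrasing achieves the same end.
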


For the sake of clarity and self-content,  a sketch of the proof of Proposition \ref{proposition:1.1} will be given 
	in Appendix \ref{section:A}, using simpler arguments than those in \cite{ASM, Zhang-Zou}. 

\begin{remark}\label{remark:1.1}
In \cite[Theorem 1.2]{Akahori-Ibrahim-Kikuchi-Nawa1}, 
 the nonexistence of ground state to \eqref{eq:1.1} was mistakenly claimed for $d=3$, 
$1+\frac{4}{d}< p < 2^\ast - 1$ and sufficiently large $\omega$.  Indeed, in the proof of that Theorem, \emph{(3.18)} was overlooked and now the above Proposition \ref{eq:1.1} fixes that. 
\end{remark}

Using a standard argument for semilinear elliptic equation (see \cite{AIKN3, GNN, Lieb-Loss}), 
we can derive the following properties of the ground states to \eqref{eq:1.1}:

\begin{proposition}\label{proposition:1.2}
Assume either $d = 3$ and $3<p<5$ or else $d\ge4$ and $1<p<\frac{d+2}{d-2}$. 
Then, for any $\omega >0$ and any ground state $Q_{\omega}$ to \eqref{eq:1.1}, the following properties hold:
\begin{enumerate}
\item[\emph{(i)}] $Q_\omega \in C^2(\Rd,\mathbb{C})$. 
\item[\emph{(ii)}] There exist $y \in \Rd$, $\theta \in \R$ and a radial positive function 
$\Phi_\omega$ such that $Q_\omega (x) = e^{i \theta} \Phi_\omega (x-y)$. 
In particular, $\Phi_\omega$ is a radial positive ground state to \eqref{eq:1.1}. 
\item[\emph{(iii)}] Each radial positive ground state to \eqref{eq:1.1} is 
strictly decreasing in the radial direction.
\end{enumerate}
\end{proposition}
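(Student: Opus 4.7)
The plan is to address (i), (ii), (iii) in order via three standard tools: elliptic bootstrap regularity, a Nehari-manifold argument combined with Kato's inequality to reduce to a real positive ground state, and the Gidas--Ni--Nirenberg moving plane method for radial symmetry and strict monotonicity.

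For (i), I would recast \eqref{eq:1.1} as the linear Schr\"odinger equation $-\Delta Q_\omega + \omega Q_\omega = V(x)\, Q_\omega$ with potential $V := |Q_\omega|^{p-1} + |Q_\omega|^{4/(d-2)}$. Sobolev embedding places $Q_\omega \in L^{2^\ast}(\Rd)$, so $V \in L^{d/2}(\Rd)$. A Brezis--Kato iteration then upgrades integrability to $Q_\omega \in L^q(\Rd)$ for every $q < \infty$. Local $W^{2,q}$ elliptic estimates, combined with Sobolev--Morrey embedding, finally yield $Q_\omega \in C^{2,\alpha}_{\rm loc} \subset C^{2}(\Rd, \mathbb{C})$.

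For (ii), I use the fact that ground states coincide with minimizers of $\mathcal{S}_\omega$ on the Nehari manifold $\mathcal{N}_\omega := \{v \in H^1(\Rd) \setminus \{0\} : \langle \mathcal{S}_\omega'(v), v\rangle = 0\}$. Given a ground state $Q_\omega$, set $\rho := |Q_\omega|$. Kato's inequality gives $|\nabla \rho| \leq |\nabla Q_\omega|$ a.e., while $\|\rho\|_r = \|Q_\omega\|_r$ for $r \in \{2, p+1, 2^\ast\}$; hence $\mathcal{S}_\omega(\rho) \leq \mathcal{S}_\omega(Q_\omega)$ and $\langle \mathcal{S}_\omega'(\rho), \rho\rangle \leq 0$. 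Rescaling so that $t\rho \in \mathcal{N}_\omega$ for some $t \in (0, 1]$, and using that $t \mapsto \mathcal{S}_\omega(tv)$ attains its maximum along the ray exactly at the Nehari point, one deduces $\mathcal{S}_\omega(t\rho) \leq \mathcal{S}_\omega(Q_\omega)$; the minimality of $Q_\omega$ on $\mathcal{N}_\omega$ then forces equalities throughout. In particular one has $\|\nabla \rho\|_{L^2} = \|\nabla Q_\omega\|_{L^2}$, which is the equality case of Kato's inequality and pins down $Q_\omega = e^{i\theta(x)}\rho$ with $\nabla \theta \equiv 0$ on $\{\rho > 0\}$. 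Since $\rho \geq 0$ satisfies the same equation, the strong maximum principle yields $\rho > 0$ throughout $\Rd$, so $\theta$ is a single global constant. Thus $\Phi_\omega := \rho$ is a real positive ground state. Because $\Phi_\omega \in C^2$ is positive and decays at infinity (an elementary consequence of $\Phi_\omega \in H^1$ and elliptic regularity), the moving plane method of Gidas--Ni--Nirenberg delivers radial symmetry of $\Phi_\omega$ about some center $y \in \Rd$.

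For (iii), the strict radial decrease is a further output of the same moving plane analysis: after centering at $y$, comparing $\Phi_\omega$ with its reflection across each hyperplane $\{x_1 = \lambda\}$ for $\lambda > 0$ and invoking the Hopf boundary point lemma produces $\partial_{x_1} \Phi_\omega(x) < 0$ for $x_1 > 0$, which in radial coordinates is exactly $\partial_r \Phi_\omega(r) < 0$ for $r > 0$. The two delicate points are the phase-rigidity step, which uses the strong maximum principle to globalize the local constancy of $\theta$, and the Brezis--Kato bootstrap, whose input requires the critical-exponent potential to be controlled in $L^{d/2}$; both are by now standard, so no genuinely new obstacle is expected.
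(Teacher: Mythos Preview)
Your proposal is correct and follows the standard route. The paper itself does not supply a proof of this proposition; it simply states that it follows from ``a standard argument for semilinear elliptic equation'' and cites \cite{AIKN3, GNN, Lieb-Loss}. Your outline is precisely the content of those references: Brezis--Kato bootstrap for (i), the Nehari/diamagnetic-inequality argument (as in \cite{Lieb-Loss}) to pass to a positive real ground state, and the Gidas--Ni--Nirenberg moving plane method \cite{GNN} for radial symmetry and strict monotonicity. One minor remark: your claim that $V \in L^{d/2}(\Rd)$ is not literally true for the subcritical piece $|Q_\omega|^{p-1}$ when $(p-1)\frac{d}{2} < 2$, but this is harmless since that piece lies in $L^{d/2} + L^\infty$ (split at level $|Q_\omega|=1$), which is all Brezis--Kato needs.
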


From Proposition \ref{proposition:1.2}, it suffices to study 
radial positive ground states to \eqref{eq:1.1}. 
Now, we state our main result: 

\begin{theorem}\label{theorem:1.1}
Assume $d\ge 5$ and $1 <p < \frac{d+2}{d-2}$. 
Then, there exists an $\omega_{*}>0$ such that for any $\omega>\omega_{*}$, 
 the (radial) positive ground state to \eqref{eq:1.1} is unique and 
nondegenerate in $\Hrad(\Rd)$. 
\end{theorem}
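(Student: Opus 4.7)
The approach I would take is guided by the high-frequency limit: ground states should concentrate, and a critical-invariant rescaling absorbs their peak height into a profile governed by the Sobolev-critical part. Writing $\mu_\omega:=\Pho(0)=\|\Pho\|_{L^\infty}$ (which makes sense by Proposition~\ref{proposition:1.2}(iii)), I introduce
\[
\TPho(y):=\mu_\omega^{-1}\Pho\bigl(\mu_\omega^{-2/(d-2)}y\bigr),
\]
so that $\TPho(0)=1$, $\|\TPho\|_{\dHRd}=\|\Pho\|_{\dHRd}$, and
\[
-\Delta \TPho+\omega\mu_\omega^{-4/(d-2)}\TPho
=\mu_\omega^{-\sigma}\TPho^{\,p}+\TPho^{(d+2)/(d-2)},\qquad
\sigma:=\tfrac{(d+2)-(d-2)p}{d-2}>0.
\]

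The first preparatory step is to show $\mu_\omega\to\infty$ and $\omega\mu_\omega^{-4/(d-2)}\to 0$ as $\omega\to\infty$, via the Pohozaev and Nehari identities for \eqref{eq:1.1} together with the convergence $\mathcal S_\omega(\Pho)\to\tfrac{1}{d}S^{d/2}$ to the Talenti level. The second step is to upgrade this to strong convergence $\TPho\to W^{*}$ in $\dHRd\cap C^2_{\mathrm{loc}}(\Rd)$, where $W^{*}$ is the Aubin--Talenti bubble normalized by $W^{*}(0)=1$; here the subcritical term is damped by the small coefficient $\mu_\omega^{-\sigma}$, radial compactness via the Strauss inequality handles the critical nonlinearity, and elliptic regularity together with the classification of positive radial solutions of $-\Delta W=W^{(d+2)/(d-2)}$ pins down the limit uniquely.

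Both nondegeneracy and uniqueness I would then prove by contradiction using the linearization and the classical radial-kernel identity $\mathrm{Ker}\,L_{W^{*}}|_{\dot H^{1}_{\mathrm{rad}}(\Rd)}=\R\Lambda W^{*}$ with $\Lambda W^{*}:=\tfrac{d-2}{2}W^{*}+x\cdot\nabla W^{*}$. For nondegeneracy, suppose a nontrivial $\varphi_n\in\mathrm{Ker}\,L_{\Phon}|_{\Hrad(\Rd)}$ exists for some $\omega_n\to\infty$, rescale to $\wt\varphi_n(y):=\mu_n^{-1}\varphi_n(\mu_n^{-2/(d-2)}y)$ with $\mu_n:=\mu_{\omega_n}$, and normalize $\|\wt\varphi_n\|_{\dHRd}=1$. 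After extraction, the limit $\wt\varphi_\infty$ solves $L_{W^{*}}\wt\varphi_\infty=0$, hence $\wt\varphi_\infty=c\Lambda W^{*}$. Applying the generator $\Lambda$ to the equation satisfied by $\Pho$ and pairing with $\varphi_n$ yields a Pohozaev-type orthogonality
\[
\omega_n\!\int\!\varphi_n\Phon\,dx
=\tfrac{(d-2)\sigma}{4}\!\int\!\varphi_n\Phon^{\,p}\,dx,
\]
which, after rescaling and passing to the limit in the dominant small parameter between $\omega_n\mu_n^{-4/(d-2)}$ and $\mu_n^{-\sigma}$, forces $c=0$ because the relevant integral ($\int(W^{*})^2$ or $\int(W^{*})^{p+1}$) is finite and nonzero---crucially, $\int(W^{*})^2<\infty$ requires $d\ge 5$. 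Strong radial compactness in $\dHRd$ then gives $\wt\varphi_n\to 0$, contradicting the normalization. Uniqueness follows from the same template applied to the normalized difference of two radial positive ground states after a common rescaling, which is legitimate once both $\mu_\omega^{(j)}$ are shown to share the same leading asymptotics.

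The principal obstacle, and the source of the dimensional restriction $d\ge 5$, is exactly the resonance flagged in the abstract: $\Lambda W^{*}\sim|x|^{2-d}$ at infinity, so $\Lambda W^{*}\in L^{2}(\Rd)$ precisely when $d\ge 5$. In lower dimensions the scaling kernel sits only in $\dHRd$ but not in $L^{2}$, the orthogonalities above degenerate, and the Fredholm-type limit of the linearization fails to close. A secondary technical difficulty, uniform control of the subcritical term as $p\downarrow 1$, is handled by the smallness factor $\mu_\omega^{-\sigma}$ together with Strauss radial decay.
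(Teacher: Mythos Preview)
Your overall architecture matches the paper's: rescale by the maximum, show convergence of $\TPho$ to the Talenti bubble, and run a contradiction argument in which the limit of the normalized kernel element (or normalized difference) lands in $\R\Lambda W$, then eliminate both alternatives. However, there is a genuine gap that would prevent the argument from closing.

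The missing ingredient is a \emph{uniform spatial decay estimate} of the form $\TPho(x)\le C(1+|x|)^{-(d-2)}$, valid for all large $\omega$ and all $\TPho\in\TGo$. In the paper this is Proposition~\ref{proposition:3.1}, obtained by applying the Kelvin transform to $\TPho$ and running Moser iteration on the transformed equation (the subcritical term becomes $\beta_\omega|x|^{-\gamma}K[\TPho]^{p-1}$, and one must check it lies in $L^{d/2}$ and then in some $L^{q_0}$ with $q_0>d/2$). You invoke only Strauss decay, but the radial $\dot H^1$ (or $H^1$) Strauss bound gives at best $|x|^{-(d-1)/2}$ or $|x|^{-(d-2)/2}$, neither of which places $\TPho$ uniformly in $L^2$; and the a~priori bound coming from \eqref{eq:2.8} yields only $\|\TPho\|_{L^2}^2\lesssim \alpha_\omega^{-1}\to\infty$. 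Without the uniform decay you cannot upgrade $\dot H^1$-convergence to $L^2$-convergence of $\TPho$ (Corollary~\ref{theorem:3.1}), and the pairing $\int\tilde\varphi_n\TPhon$ in your Pohozaev orthogonality is not even known to be bounded, let alone convergent to $c\int W\,\Lambda W$.

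This feeds a second gap: your phrase ``the dominant small parameter between $\omega_n\mu_n^{-4/(d-2)}$ and $\mu_n^{-\sigma}$'' is misleading, because neither dominates. The scaled Pohozaev--Nehari relation \eqref{eq:2.11} forces $\alpha_\omega\|\TPho\|_{L^2}^2$ and $\beta_\omega\|\TPho\|_{L^{p+1}}^{p+1}$ to be exact multiples of each other, so once $\|\TPho\|_{L^2}\to\|W\|_{L^2}$ and $\|\TPho\|_{L^{p+1}}\to\|W\|_{L^{p+1}}$ (both requiring the uniform decay), one gets a \emph{specific finite nonzero} limit for $\beta_\omega/\alpha_\omega$ (Lemma~\ref{proposition:4.1}). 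The contradiction for $c\ne0$ then comes not from a single vanishing integral but from equating two nonzero multiples of $\|W\|_{L^2}^2$ with different coefficients, namely $1$ versus $\tfrac{4-(d-2)(p-1)}{4}$. If you only knew that one parameter dominated, the surviving identity in the balanced case would be $\int W^{(d+2)/(d-2)}\Lambda W=0$, which is automatically true by scale invariance and yields nothing. Finally, your dismissal of the case $c=0$ via ``strong radial compactness in $\dot H^1$'' is not standard; the paper sidesteps this by normalizing in $L^\infty$ and using $C^2_{\mathrm{loc}}$-convergence, which immediately rules out a zero limit.
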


\begin{remark}\label{remark:1.2}
{\rm (i)} When $3 \leq d \leq 6$ and $\frac{4}{d-2} \leq p < \frac{d+2}{d-2}$ with $1<p$, 
by Propositions \ref{proposition:1.1}, \ref{proposition:1.2} and \ref{proposition:C.1}, equation
\eqref{eq:1.1} admits a unique radial positive solution for any $\omega > 0$. 
Furthermore, combining this fact with Theorem \ref{theorem:1.1}, 
when $d=5,6$, $\frac{4}{d-2} \leq p < \frac{d+2}{d-2}$ with $1<p$ and $\omega > \omega_\ast$, 
we find that 
the unique positive solution to \eqref{eq:1.1} is nondegenerate in $\Hrad(\Rd)$. 

\noindent
{\rm (ii)}
It follows from \cite[Section 5]{Ni-Takagi} and Theorem \ref{theorem:1.1} that for any $d\ge 5$, any $\omega > \omega_\ast$ and the radial positive ground state $\Phi_\omega$ to \eqref{eq:1.1}, 
we can prove
	\[
		{\rm Ker}\, L_{\Pho} 
		= \left\{ \sum_{j=1}^d a_j \frac{\partial \Phi_\omega}{\partial x_j} : 
		a_j \in \mathbb{C} 
		 \right\}. 
	\]
See \eqref{eq:1.3} for the definition of $L_{\Pho}$. 

\noindent
{\rm (iii)} In \cite{ColesGustafson}, Coles and Gustafson established the uniqueness of the ground state to 
	$$
		 -\Delta u + \hat{\omega}(\e) u =\varepsilon |u|^{p-1}u+|u|^{\frac{4}{d-2}}u \quad {\rm in} \ \R^3
	$$
where $3<p<5$, $0<\e \ll 1$, $\hat{\omega}(\e) = \omega_1 \e^2 + o(\e^2)$ and $\omega_1>0$. 
By scaling, the equation can be rewritten as  
	\[
		-\Delta u + \omega_{\e} u  = |u|^{p-1} u + |u|^{\frac{4}{d-2}} u \quad {\rm in} \ \R^3, \quad 
		\omega_{\e} \to \infty \quad {\rm as} \ \e \to 0.
	\]
Therefore, Theorem \ref{theorem:1.1} is a higher-dimensional counterpart of the results 
in \cite{ColesGustafson}, however, 
our argument is different from the one in \cite{ColesGustafson}. 
\end{remark}

Next, we describe the difficulties and ideas of the proof of Theorem \ref{theorem:1.1} 
	as well as its comparison with the case $\omega \ll 1$ and the result of \cite{Grossi,GLP}.  
Set 
	\[
		\Go := \left\{ \Pho  : 
		\text{$\Pho$ is a radial positive ground state to \eqref{eq:1.1}}. \right\}
	\]
	
Our aim is to show that $\Go$ is a singleton and 
${\rm Ker}\, L_{\Pho}|_{\Hrad (\Rd)}  = \{0\}$ for $\Pho \in \Go$.

In \cite{AIKN3}, for $\omega \ll 1$ and  $\Pho \in \Go$, 
we use the following rescaling 
corresponding to the subcritical power $p$: 
\[
T_{\omega}[\Phi_{\omega}] 
:=\omega^{-\frac{1}{p-1}}\Phi_{\omega} \left( \omega^{-\frac{1}{2}} x  \right),
\]
so that $T_{\omega} [ \Pho ]$ solves 
\[
-\Delta u + u  
-
|u|^{p-1}u
- 
\omega^{\frac{2^{*}-(p+1)}{p-1}}|u|^{\frac{4}{d-2}}u 
=0.
\]
Next, we showed that $T_\omega [\Pho] \to U$ 
strongly in $H^1(\Rd)$ as $\omega \to 0$ where 
$U$ is a unique radial positive solution to \eqref{eq:1.7}. 
By \cite{Kwong}, we know that $U$ is nondegenerate in $\Hrad(\Rd)  $, that is, 
\[
{\rm Ker}\, L^{\dagger}_{U} |_{\Hrad (\Rd)} = \left\{ 0 \right\}, \quad 
L^{\dagger}_{U} := -\Delta + 1 - p U^{p-1}.
\]
Hence, from the implicit function theorem, we observe that 
for $\omega \ll 1$, 
\eqref{eq:1.1} admits a unique radial positive ground state to \eqref{eq:1.1} 
which is nondegenerate in $\Hrad (\Rd)$.

On the other hand, when $\omega \gg 1$, 
the Sobolev critical term becomes dominant and 
we use the following rescaling (cf. \cite{Han, Moroz-Muratov, Pan-Wang, PS, ST}): 
\begin{equation}\label{eq:1.8}
\widetilde{\Phi}_{\omega}(x)=M_{\omega}^{-1}\Phi_{\omega} \left(M_{\omega}^{-\frac{2}{d-2}}x\right), 
\quad M_\omega := \Pho (0) = \| \Pho \|_{L^\infty(\Rd)}. 
\end{equation}
Then, we can  verify that
\begin{equation}\label{eq:1.9}
-
\Delta \widetilde{\Phi}_{\omega}
+
\alpha_{\omega}  \widetilde{\Phi}_{\omega}
-
\beta_{\omega} \widetilde{\Phi}_{\omega}^{p}
-
\widetilde{\Phi}_{\omega}^{\frac{d+2}{d-2}}
=
0, \quad 
\alpha_{\omega}:=\omega M_{\omega}^{-\frac{4}{d-2}}, \quad 
\beta_{\omega}:= M_{\omega}^{p-1-\frac{4}{d-2}}. 
\end{equation}
Next, we prove $\alpha_\omega, \beta_\omega \to 0$ and 
$\TPho \to W$ strongly in $\dot{H}^1(\Rd)$ as $\omega \to \infty$ 
where 
	\[
		\dot{H}^1(\Rd) := \left\{ u \in L^{2^\ast} (\Rd) : 
		\| u \|_{\dot{H}^1} :=   \| \nabla u \|_{L^2(\Rd)} < \infty \right\}
	\]
and $W$ is \emph{the Talenti function} with $W(0)=1$, that is,  
\begin{equation}\label{eq:1.10}
W(x)
:=\left(1+\frac{|x|^{2}}{d(d-2)} \right)^{-\frac{d-2}{2}}, \quad 
-\Delta W = W^{\frac{d+2}{d-2}} \quad {\rm in} \ \Rd. 
\end{equation}
Remark that the convergence is proved in $\dH(\Rd)$, 
which is different from $H^1(\Rd)$. 
Moreover, in contrast to $U$,  $W$ is degenerate in $\dH_{\rm rad}(\Rd)$, and 
\begin{equation}\label{eq:1.11}
{\rm Ker}\, L^{\ddagger}_W|_{\dH_{\rm rad}(\Rd)} = \left\{ \Lambda W \right\}  
\end{equation}
where 
\begin{equation}\label{11111}
L^{\ddagger}_W := -\Delta + \frac{d+2}{d-2} W^{\frac{4}{d-2}}, \quad 
\Lambda W := \frac{d-2}{2}W+x\cdot \nabla W 
= \frac{d}{d \lambda} \Big|_{\lambda =1}
\lambda^{\frac{d-2}{2}} W( \lambda x ).
\end{equation}
From these facts, we need more delicate analysis to show the uniqueness 
and nondegeneracy for $\omega \gg 1$.

To overcome the above difficulties, we use arguments inspired 
by \cite{Grossi,GLP}. 
We argue indirectly and suppose that Theorem \ref{theorem:1.1} fails to hold. 
To derive a contradiction, 
key ingredients consist of a uniform decay estimate of elements of $\TGo$ and 
Pohozaev's identity corresponding to \eqref{eq:1.9} where 
		\[
		\TGo := 
		\{ \TPho \colon
		\Phi_\omega \in \Go
		\}.
	\]

For the uniform spatial decay, we use the Kelvin transform to reduce the problem to a  ball and apply Moser's iteration scheme. One of differences from \cite{Grossi,GLP} 
is the presence of the subcritical term $\beta_\omega \TPho^p$ and 
we have to show that this term is harmless to employ the Moser iteration.

After showing the uniform decay, we derive a contradiction. 
In \cite{Grossi,GLP}, contradictions are derived through 
the information on boundary data. 
In our case, we investigate the behaviors of 
$\alpha_\omega \TPho$ and $\beta_\omega \TPho^p$ in \eqref{eq:1.9} 
with Pohozaev's identity. 
To this end, we need not only 
the convergence of $\widetilde{\Phi}_{\omega}$ in $\dot{H}^{1}(\mathbb{R}^{d})$ 
but also in $L^{2}(\mathbb{R}^{d})$ which can be obtained from the uniform decay. 
This requires us to assume $d \geq 5$.

Now, we introduce the notation used in this paper. 
By $B_R$ we denote the open ball in $\mathbb{R}^{d}$ of center $0$ and radius $R$, namely, $B_{R}:=\{x\in \mathbb{R}^{d} : |x| < R \}$. 
For given positive quantities $a$ and $b$, the notation $a \lesssim b$ means 
the inequality $a\le C b$ for some positive constant $C$ depending only on $d$ and $p$.

This paper is organized as follows. 
In Section \ref{section:2}, we prove the convergence results of elements of $\TGo$ as $\omega \to \infty$. 
Section \ref{section:3} is devoted to deriving the uniform decay estimate for elements of $\TGo$. 
Finally, in Section \ref{section:4}, we give a proof of Theorem \ref{theorem:1.1}. 
For readers' convenience, we include an Appendix where 
we prove Proposition \ref{proposition:1.1} in Section \ref{section:A}, 
state a result of the Moser iteration technique in Section \ref{section:B}, 
and discuss when the result of \cite{PS} is applicable to \eqref{eq:1.1} in Section \ref{section:C}.

%%%%%%%%%%%%%%%%%%%%%%%%%%%%%%%%%%%%%%%%%%%%%%%%%%%%%%%%%%%%%%%%%%%%%%%%%%%%%%%%
\section{Convergence as $\boldsymbol{\omega\to \infty}$}\label{section:2}

Our aim in this section is to prove the following convergence result of rescaled ground states:
\begin{proposition}\label{proposition:2.1}
	Assume $d\ge 3$, $1<p<\frac{d+2}{d-2}$ and $\Go \neq \emptyset$. Then, we have
	\[
	\lim_{\omega \to \infty} 
	\sup_{\TPho \in \TGo} 
	\big\| \TPho - W  \big\|_{\dH} = 0
	\]
	where $\TPho$ and $W$ are defined in \eqref{eq:1.8} and \eqref{eq:1.10}.
\end{proposition}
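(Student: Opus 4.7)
The plan is to argue by contradiction: assume the conclusion fails, so that along some sequence $\omega_{n}\to\infty$ there exist $\Phon\in\Gon$ with $\|\TPhon-W\|_{\dH}\ge\delta>0$. Testing \eqref{eq:1.1} against $\Pho$ and substituting into the definition of $\mathcal S_{\omega}$ gives the action identity
\[
m_{\omega}:=\mathcal S_{\omega}(\Pho)=\frac{p-1}{2(p+1)}\|\Pho\|_{L^{p+1}}^{p+1}+\frac{1}{d}\|\Pho\|_{L^{2^{*}}}^{2^{*}}.
\]
Combined with the standard Aubin--Talenti upper bound $m_{\omega}\le m_{\infty}:=\frac{1}{d}\|W\|_{L^{2^{*}}}^{2^{*}}$ (from a rescaled cut-off of $W$ used as test function; cf.\ the proof of Proposition \ref{proposition:1.1}), this yields $\|\Pho\|_{L^{p+1}}^{p+1}+\|\Pho\|_{L^{2^{*}}}^{2^{*}}\le C$, and, via the equation tested against $\Pho$, $\|\nabla\Pho\|_{L^{2}}\le C$. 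Since the rescaling \eqref{eq:1.8} preserves the $\dH$- and $L^{2^{*}}$-norms, $\|\nabla\TPho\|_{L^{2}}$ and $\|\TPho\|_{L^{2^{*}}}$ are uniformly bounded. Pohozaev's identity for \eqref{eq:1.1} combined with the equation yields $\omega\|\Pho\|_{L^{2}}^{2}=c_{p}\|\Pho\|_{L^{p+1}}^{p+1}$ with $c_{p}:=\frac{(d-2)(2^{*}-1-p)}{2(p+1)}>0$, and the elementary interpolation $\|\Pho\|_{L^{p+1}}^{p+1}\le M_{\omega}^{p-1}\|\Pho\|_{L^{2}}^{2}$ forces $\omega\le c_{p}M_{\omega}^{p-1}$. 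Since $p-1<4/(d-2)$, this implies $M_{\omega}\to\infty$ together with $\alpha_{\omega},\beta_{\omega}\to 0$.

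Next I identify the limit. Passing to a subsequence, $\TPhon\rightharpoonup\widetilde{\Phi}_{\infty}$ weakly in $\dHRd$ and almost everywhere on $\Rd$. By Proposition \ref{proposition:1.2} and the normalization $M_{\omega}=\Pho(0)$, each $\TPhon$ is radial, positive, radially decreasing, with $\TPhon(0)=1$, hence $0\le\TPhon\le 1$. Then the right-hand side of \eqref{eq:1.9} is bounded uniformly in $n$, and Schauder theory upgrades the convergence to $C^{1}_{\rm loc}(\Rd)$; in particular $\widetilde{\Phi}_{\infty}(0)=1$, and $\widetilde{\Phi}_{\infty}\in\dHRd$ is radial and nonnegative. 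Passing to the distributional limit in \eqref{eq:1.9}, noting that $\alpha_{\omega_{n}}\TPhon+\beta_{\omega_{n}}\TPhon^{p}\to 0$ uniformly on compact sets, yields $-\Delta\widetilde{\Phi}_{\infty}=\widetilde{\Phi}_{\infty}^{(d+2)/(d-2)}$ classically. Because $\widetilde{\Phi}_{\infty}$ is nonnegative, superharmonic, and equal to $1$ at the origin, the strong minimum principle gives $\widetilde{\Phi}_{\infty}>0$, and the classification of positive radial $\dH$-solutions of the Yamabe equation with value $1$ at the origin identifies $\widetilde{\Phi}_{\infty}=W$.

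It remains to upgrade to strong $\dH$-convergence. The action identity and $m_{\omega_{n}}\le m_{\infty}$ give
\[
\|\TPhon\|_{L^{2^{*}}}^{2^{*}}=\|\Phon\|_{L^{2^{*}}}^{2^{*}}\le dm_{\omega_{n}}\le\|W\|_{L^{2^{*}}}^{2^{*}},
\]
so Brezis--Lieb applied to $\TPhon\to W$ a.e.\ forces $\TPhon\to W$ strongly in $L^{2^{*}}(\Rd)$, hence $\|\TPhon\|_{L^{2^{*}}}^{2^{*}}\to\|W\|_{L^{2^{*}}}^{2^{*}}=dm_{\infty}$. Substituting this back into the action identity yields $\|\Phon\|_{L^{p+1}}^{p+1}\to 0$. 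Combining Pohozaev's identity for \eqref{eq:1.9} with the equation tested against $\TPhon$ gives the key algebraic relation
\[
\|\nabla\TPhon\|_{L^{2}}^{2}-\|\TPhon\|_{L^{2^{*}}}^{2^{*}}=\frac{d(p-1)}{2(p+1)}\beta_{\omega_{n}}\|\TPhon\|_{L^{p+1}}^{p+1}=\frac{d(p-1)}{2(p+1)}\|\Phon\|_{L^{p+1}}^{p+1}\longrightarrow 0,
\]
so $\|\nabla\TPhon\|_{L^{2}}^{2}\to\|W\|_{L^{2^{*}}}^{2^{*}}=\|\nabla W\|_{L^{2}}^{2}$. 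Weak convergence together with convergence of norms in the Hilbert space $\dHRd$ then implies $\TPhon\to W$ strongly in $\dHRd$, contradicting the standing assumption.

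The main subtle point is that the subcritical term $\beta_{\omega}\TPho^{p}$ in \eqref{eq:1.9} is \emph{not} manifestly small: the rescaling \eqref{eq:1.8} cancels the decay of $\beta_{\omega}$ against the growth of $\|\TPho\|_{L^{p+1}}^{p+1}$, so that $\beta_{\omega}\|\TPho\|_{L^{p+1}}^{p+1}=\|\Pho\|_{L^{p+1}}^{p+1}$ is only a priori bounded. Its smallness has to be extracted indirectly through the energy chain ``$m_{\omega}\le m_{\infty}$, Brezis--Lieb with a.e.\ convergence, and the action identity,'' after which the rescaled Pohozaev identity converts it into the required $\dH$-norm convergence. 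All estimates above are uniform in $\Phon\in\Gon$, yielding the supremum statement.
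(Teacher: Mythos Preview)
Your proof is correct and follows the same overall skeleton as the paper: argue by contradiction, establish boundedness and $\alpha_\omega,\beta_\omega\to 0$, extract a local limit, identify it as $W$ via the classification of radial positive solutions, and upgrade weak to strong $\dH$-convergence through norm convergence. Two technical choices differ from the paper and are worth noting.

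First, for $\alpha_\omega\to 0$ the paper proceeds by contradiction: assuming $\alpha_{\omega_n}\to\alpha>0$, it passes to a limit solving $-\Delta\tilde\Phi+\alpha\tilde\Phi=\tilde\Phi^{(d+2)/(d-2)}$ with $\tilde\Phi(0)=1$, and kills it via Pohozaev. Your route---combining the Pohozaev/Nehari identity $\omega\|\Pho\|_{L^2}^2=c_p\|\Pho\|_{L^{p+1}}^{p+1}$ with the pointwise interpolation $\|\Pho\|_{L^{p+1}}^{p+1}\le M_\omega^{p-1}\|\Pho\|_{L^2}^2$ to obtain $\omega\le c_pM_\omega^{p-1}$, hence $\alpha_\omega\le c_p\beta_\omega\to 0$---is more direct and avoids that detour.

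Second, for the strong $\dH$-convergence the paper is more economical. From Pohozaev alone one has $\mathcal S_\omega(\Pho)=\tfrac1d\|\nabla\Pho\|_{L^2}^2$ (this is the paper's identity $\mathcal S_\omega-\mathcal P_\omega=\tfrac1d\|\nabla u\|_{L^2}^2$), so $m_\omega\le m_\infty$ immediately yields $\|\nabla\TPhon\|_{L^2}\le\|\nabla W\|_{L^2}$; weak lower semicontinuity then gives norm convergence in one line. Your argument instead uses the Nehari-based identity $m_\omega=\tfrac{p-1}{2(p+1)}\|\Pho\|_{L^{p+1}}^{p+1}+\tfrac1d\|\Pho\|_{L^{2^*}}^{2^*}$, which only bounds the $L^{2^*}$ norm, forcing the Brezis--Lieb step, the deduction $\|\Phon\|_{L^{p+1}}^{p+1}\to 0$, and the second Pohozaev relation $\|\nabla\TPhon\|_{L^2}^2-\|\TPhon\|_{L^{2^*}}^{2^*}=\tfrac{d(p-1)}{2(p+1)}\|\Phon\|_{L^{p+1}}^{p+1}$. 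This works, but had you used the Pohozaev-based action identity from the start the whole paragraph on ``the main subtle point'' would disappear.
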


In order to prove Proposition \ref{proposition:2.1}, we  introduce Nehari's and Pohozaev's functionals %one $\mathcal{P}_{\omega}$ 
(associated to equation \eqref{eq:1.1}) defined by:  
\begin{align}
\mathcal{N}_{\omega}(u)
&:=
\|\nabla u \|_{L^{2}}^{2}
+
\omega 
\|u \|_{L^{2}}^{2}
-
\|u\|_{L^{p+1}}^{p+1}
-
\|u\|_{L^{2^{*}}}^{2^{*}},
\label{eq:2.1}
\end{align}
and
%\\[6pt]

\begin{align}
\mathcal{P}_{\omega}(u)
&:=
\frac{1}{2^{*}} \|\nabla u \|_{L^{2}}^{2}
+
\frac{\omega}{2} 
\|u \|_{L^{2}}^{2}
-
\frac{1}{p+1}\|u\|_{L^{p+1}}^{p+1}
-
\frac{1}{2^{*}}\|u\|_{L^{2^{*}}}^{2^{*}}
\label{eq:2.2}
\end{align}
respectively.
Recalling \eqref{eq:1.2}, 
the following linear combinations are useful in the study of ground states to \eqref{eq:1.1}: 
\begin{align}
\mathcal{S}_{\omega}(u)- \mathcal{P}_{\omega}(u)
&=
\frac{1}{d}\|\nabla u \|_{L^{2}}^{2},
\label{eq:2.3}
\\[6pt]
\label{eq:2.4}
\mathcal{P}_{\omega}(u)-\frac{1}{2^{*}}\mathcal{N}(u)
&=
\frac{\omega}{d} \|u \|_{L^{2}}^{2}
-
\frac{2^{*}-(p+1)}{2^{*}(p+1)}\|u\|_{L^{p+1}}^{p+1},
\\[6pt]
\label{eq:2.5}
\mathcal{P}_{\omega}(u)-\frac{1}{p+1}\mathcal{N}(u)
&=
\frac{p-1}{2(p+1)} \omega \|u \|_{L^{2}}^{2}
-
\frac{2^{*}-(p+1)}{2^{*}(p+1)}
\Big\{
\|\nabla u\|_{L^{2}}^{2}
-
\|u\|_{L^{2^{*}}}^{2^{*}}
\Big\}
.
\end{align}

We record the following basic properties of solutions to \eqref{eq:1.1}: 
\begin{lemma}\label{proposition:2.2}
Assume $d\ge 3$, $1<p<\frac{d+2}{d-2}$ and $\omega>0$. Then, the following hold:
\begin{enumerate}
\item[{\rm (i)}] If $u$ is an $H^{1}$-solution to \eqref{eq:1.1}, then 
\begin{equation}\label{eq:2.6}
\mathcal{N}_{\omega}(u)=\mathcal{P}_{\omega}(u)
=0, 
\qquad  S_\omega (u) = \frac{1}{d} \| \nabla u \|_{L^{2}}^{2}. 
\end{equation}
\item[{\rm (ii)}] If $\Pho \in \Go$, then 
\begin{align}
\label{eq:2.7}
\|\nabla \Pho \|_{L^{2}}
&\le 
\|\nabla W \|_{L^{2}},
\\[6pt]
\label{eq:2.8}
\|\Pho \|_{L^{2}}
&\lesssim 
\omega^{-\frac{1}{2}},
\\[6pt]
\label{eq:2.9}
|\Pho(x)|
&\lesssim  
\omega^{-\frac{1}{4}} |x|^{-\frac{d-1}{2}} \quad 
{\rm for\ all} \ x \in \Rd \setminus \{0\}
.
\end{align}
\end{enumerate}
\end{lemma}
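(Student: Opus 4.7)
The plan is to dispatch (i) with the usual Nehari and Pohozaev multipliers, and then to obtain the three estimates in (ii) in the order stated: the gradient bound from the ground state characterization, the $L^{2}$-bound from the Pohozaev--Nehari combination \eqref{eq:2.5}, and the pointwise decay from the radial Sobolev inequality.

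For part (i), I would derive $\mathcal{N}_\omega(u)=0$ by multiplying \eqref{eq:1.1} by $\overline{u}$ and integrating (integrability of the nonlinearities against $u$ follows from $u\in H^1(\Rd)$ and Sobolev embedding). For $\mathcal{P}_\omega(u)=0$, I would use the scaling multiplier $x\cdot\nabla u$, justified by a standard cutoff argument together with the $C^2$-regularity of $H^1$-solutions obtained from elliptic bootstrap. The action identity $\mathcal{S}_\omega(u)=\tfrac{1}{d}\|\nabla u\|_{L^{2}}^{2}$ then follows at once from \eqref{eq:2.3}.

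For the gradient bound \eqref{eq:2.7}, the fact that $\Pho$ is a ground state gives $\mathcal{S}_\omega(\Pho)\le \mathcal{S}_\omega(v)$ for every nontrivial solution $v$; combined with the action identity of (i), the inequality reduces to showing $\mathcal{S}_\omega(\Pho)\le \tfrac{1}{d}\|\nabla W\|_{L^{2}}^{2}$, i.e.\ the Brezis--Nirenberg-type threshold estimate on the ground state level. I would obtain it by testing against a truncated, rescaled Talenti bubble $u_\epsilon(x)=\chi(x)\epsilon^{-(d-2)/2}W(x/\epsilon)$ and showing $\max_{t>0}\mathcal{S}_\omega(tu_\epsilon)\le \tfrac{1}{d}\|\nabla W\|_{L^{2}}^{2}$ for $\epsilon$ small, which combined with the Nehari/mountain-pass characterization of $\mathcal{S}_\omega(\Pho)$ yields \eqref{eq:2.7}. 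This is the main technical ingredient, but the construction is the same one used in the proof of Proposition~\ref{proposition:1.1} sketched in Appendix~\ref{section:A}.

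Granted \eqref{eq:2.7}, the remaining two estimates are short. Since $\mathcal{P}_\omega(\Pho)=\mathcal{N}_\omega(\Pho)=0$ by (i), identity \eqref{eq:2.5} rearranges to
\[
\frac{p-1}{2(p+1)}\,\omega\,\|\Pho\|_{L^{2}}^{2}
=\frac{2^{*}-(p+1)}{2^{*}(p+1)}\bigl(\|\nabla\Pho\|_{L^{2}}^{2}-\|\Pho\|_{L^{2^{*}}}^{2^{*}}\bigr)
\le \frac{2^{*}-(p+1)}{2^{*}(p+1)}\,\|\nabla W\|_{L^{2}}^{2},
\]
where the final inequality uses \eqref{eq:2.7}, giving \eqref{eq:2.8}. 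Finally, \eqref{eq:2.9} follows from the radial Sobolev (Strauss) inequality $|u(x)|\lesssim |x|^{-(d-1)/2}\|u\|_{L^{2}}^{1/2}\|\nabla u\|_{L^{2}}^{1/2}$ applied to the radial positive ground state $\Pho$ (cf.\ Proposition~\ref{proposition:1.2}) together with \eqref{eq:2.7} and \eqref{eq:2.8}.
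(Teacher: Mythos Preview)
Your proposal is correct and follows essentially the same route as the paper. For (i) the paper simply cites Berestycki--Lions; for \eqref{eq:2.7} the paper invokes \eqref{eq:A.5} (the Brezis--Nirenberg threshold via the truncated Talenti bubble, exactly your test function $u_\varepsilon$); for \eqref{eq:2.8} the paper uses \eqref{eq:2.5}, \eqref{eq:2.6} and \eqref{eq:2.7} just as you do; and for \eqref{eq:2.9} the paper writes out explicitly the fundamental-theorem-of-calculus/H\"older/Hardy computation that yields $|x|^{d-1}|\Pho(x)|^{2}\lesssim\|\nabla\Pho\|_{L^{2}}\|\Pho\|_{L^{2}}$, which is precisely the Strauss inequality you cite.
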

\begin{proof}
See \cite{Berestycki-Lions} for the proof of the identities in \eqref{eq:2.6}. 
The inequality \eqref{eq:2.7} follows from \eqref{eq:2.6} and \eqref{eq:A.5}. 
The inequality \eqref{eq:2.8} follows from \eqref{eq:2.5}, \eqref{eq:2.6} and \eqref{eq:2.7}.  Finally, we prove \eqref{eq:2.9}. Using the fundamental theorem of calculus, H\"older's inequality and Hardy's inequality, we see that 
\begin{equation*}
\begin{aligned}
|x|^{d-1} |\Phi_{\omega}(x)|^{2}
&\le 
\int_{0}^{|x|} 
\left\{ 
(d-1)r^{d-1} \frac{|\Phi_{\omega}(r)|}{r} |\Phi_{\omega}(r)|
+
2r^{d-1} |\Phi_{\omega}'(r)| |\Phi_{\omega}(r)|
\right\}
\,dr 
\\[6pt]
&\lesssim 
\left(
\int_{0}^{|x|} 
\left\{
\frac{|\Phi_{\omega}(r)|^{2}}{r^{2}} 
+
|\Phi_{\omega}'(r)|^{2} \right\} r^{d-1} \,dr
\right)^{\frac{1}{2}}
\left(
\int_{0}^{|x|} 
|\Phi_{\omega}(r)|^{2} r^{d-1} \,dr
\right)^{\frac{1}{2}}
\\[6pt]
&\lesssim  
\| \nabla \Phi_{\omega}\|_{L^{2}} \|\Phi_{\omega}\|_{L^{2}}
.
\end{aligned}
\end{equation*}
Furthermore, applying the inequalities \eqref{eq:2.7} and \eqref{eq:2.8} to the right-hand side above, we obtain the desired result \eqref{eq:2.9}.  
\end{proof}

Next, we consider the rescaled ground states. Let $\widetilde{\Phi}_{\omega} \in \TGo$. Then, $\widetilde{\Phi}_{\omega}$ satisfies equation \eqref{eq:1.9} and 
\begin{equation}\label{eq:2.10}
\|\widetilde{\Phi}_{\omega}\|_{L^{\infty}}=\widetilde{\Phi}_{\omega}(0)=1=\|W\|_{L^{\infty}}=W(0).
\end{equation}
Moreover, we see from \eqref{eq:1.9}, \eqref{eq:2.6} and \eqref{eq:2.4} that  
\begin{align}
\alpha_{\omega}\|\widetilde{\Phi}_{\omega}\|_{L^{2}}^{2}
+
\|\nabla \widetilde{\Phi}_{\omega}\|_{L^{2}}^{2}
-
\beta_{\omega}\|\widetilde{\Phi}_{\omega}\|_{L^{p+1}}^{p+1}
-
\|\widetilde{\Phi}_{\omega}\|_{L^{2^{*}}}^{2^{*}}
&=0, 
\nonumber
\end{align}and
\begin{align}
\label{eq:2.11}
\frac{\alpha_{\omega}}{d} 
\|\widetilde{\Phi}_{\omega} \|_{L^{2}}^{2}
-
\frac{2^{*}-(p+1)}{2^{*} (p+1)}
\beta_{\omega} 
\|\widetilde{\Phi}_{\omega} \|_{L^{p+1}}^{p+1}
&
=0
.
\end{align}

The following lemma tells us the asymptotic behavior of $M_{\omega}$ and $\alpha_\omega$ as $\omega \to \infty$: 
\begin{lemma}\label{proposition:2.3}
Assume $d\ge 3$, $1<p<\frac{d+2}{d-2}$ and $\Go \neq \emptyset$. Then 
\begin{align}
\label{eq:2.12}
\lim_{\omega \to \infty} \inf_{\Pho \in \Go } M_{\omega} &= \infty,
\\
\label{eq:2.13}
\lim_{\omega \to \infty} \sup_{\Pho \in \Go} \alpha_\omega &=0.
\end{align} 
\end{lemma}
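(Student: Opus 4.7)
The plan is to extract both limits directly from the normalization $\TPho(0)=\sup \TPho = 1$ together with the Pohozaev--Nehari identity already recorded in \eqref{eq:2.11}.

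For \eqref{eq:2.12} I would argue by contradiction and rely on a pointwise evaluation of \eqref{eq:1.1} at $x=0$. By Proposition \ref{proposition:1.2}, every $\Pho \in \Go$ is $C^2$, radial, positive, and strictly decreasing, so $x=0$ is its global maximum and in particular $\Pho'(0)=0$ and $\Pho''(0)\le 0$, which gives $-\Delta \Pho(0)\ge 0$. Reading \eqref{eq:1.1} at the origin then yields a bound of the form
\[
\omega M_\omega \;\le\; M_\omega^{p} + M_\omega^{\frac{d+2}{d-2}},
\qquad \text{i.e.,} \qquad
\omega \;\le\; M_\omega^{\,p-1} + M_\omega^{\frac{4}{d-2}}.
\]
If $\inf_{\Pho \in \Gon} M_{\omega_n}$ were bounded along some sequence $\omega_n\to\infty$, one could choose $\Phi_{\omega_n}\in\Gon$ along which the right-hand side stays bounded, forcing $\omega_n$ bounded and giving a contradiction.

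For \eqref{eq:2.13} I would exploit \eqref{eq:2.11}. The crucial observation is that, by the very definition \eqref{eq:1.8} and Proposition \ref{proposition:1.2}, every $\TPho \in \TGo$ is a continuous positive radial function with global maximum $1$ at the origin, so $0 < \TPho \le 1$ pointwise on $\Rd$. Since $p>1$, this pointwise bound gives $\TPho^{p+1}\le \TPho^{2}$ and therefore
\[
\|\TPho\|_{L^{p+1}}^{p+1} \;\le\; \|\TPho\|_{L^{2}}^{2}.
\]
The quantity $\|\TPho\|_{L^2}^2$ is finite (because $\Pho\in H^1$) and strictly positive (because $\TPho(0)=1$ and $\TPho$ is continuous), so dividing \eqref{eq:2.11} by it produces a bound of the form $\alpha_\omega \lesssim \beta_\omega$ with constant depending only on $d$ and $p$. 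Finally, $\beta_\omega = M_\omega^{\,p-1-\frac{4}{d-2}}$ has a strictly negative exponent since $p<2^\ast-1$, so the already-established \eqref{eq:2.12} upgrades $\beta_\omega\to 0$ into uniform convergence over $\TGo$, yielding \eqref{eq:2.13}.

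Neither step presents serious analytic difficulty; the main conceptual point is recognizing that the normalization $\TPho\le 1$ is exactly what converts the identity \eqref{eq:2.11} into the one-sided comparison $\alpha_\omega\lesssim\beta_\omega$. Uniformity across $\Go$ is automatic, since every constant appearing in the argument depends only on $d$ and $p$ and every pointwise or integral bound used is one satisfied by every element of the ground-state family.
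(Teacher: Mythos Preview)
Your proof of \eqref{eq:2.12} coincides with the paper's: both evaluate \eqref{eq:1.1} at the origin and use that $0$ is a maximum point to obtain $\omega \le M_\omega^{\,p-1}+M_\omega^{4/(d-2)}$.

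For \eqref{eq:2.13}, your argument is correct and genuinely different from the paper's, and in fact more elementary. The paper proceeds by contradiction: assuming $\limsup\alpha_\omega=\alpha>0$, it uses \eqref{eq:2.7}--\eqref{eq:2.8} to bound a subsequence $\{\TPhon\}$ in $H^1(\Rd)$, then invokes the $W^{2,q}$ and Schauder estimates to extract a $C^2_{\rm loc}$-limit $\widetilde\Phi$ solving $-\Delta\widetilde\Phi+\alpha\widetilde\Phi=\widetilde\Phi^{(d+2)/(d-2)}$ with $\widetilde\Phi(0)=1$, and finally appeals to Pohozaev's identity to force $\widetilde\Phi\equiv 0$, a contradiction. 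Your route bypasses all of this: the pointwise bound $0<\TPho\le 1$ gives $\|\TPho\|_{L^{p+1}}^{p+1}\le \|\TPho\|_{L^2}^2$, and dividing \eqref{eq:2.11} through by the (finite, positive) quantity $\|\TPho\|_{L^2}^2$ yields directly
\[
\alpha_\omega \;\le\; \frac{d\,(2^*-(p+1))}{2^*(p+1)}\,\beta_\omega,
\]
after which \eqref{eq:2.12} and the negativity of $p-1-\tfrac{4}{d-2}$ finish the job. What your approach buys is a clean quantitative inequality and no use of elliptic regularity or compactness; what the paper's approach buys is a template (limiting equation plus Pohozaev) that it reuses elsewhere, e.g.\ in Proposition \ref{proposition:2.1}.
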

\begin{proof}
First, we prove \eqref{eq:2.12}. Let $\omega>0$ and $\Pho \in \Go$. 
Since $0$ is a maximum point of $\Pho$ by Proposition \ref{proposition:1.2}, 
we see $\Delta \TPho (0) \leq 0$. 
Recalling $\TPho(0)=1$, we see from \eqref{eq:1.9} that
	\begin{equation}\label{eq:2.14}
		\omega - M_{\omega}^{p-1} - M_{\omega}^{\frac{4}{d-2}} \le 0,
	\end{equation}
which implies \eqref{eq:2.12}. 
\par 
Next, we prove \eqref{eq:2.13}. From \eqref{eq:2.12} and \eqref{eq:2.14}, we may assume $M_\omega \geq 1$ and $\omega \leq 2 M_\omega^{\frac{4}{d-2}}$. Furthermore, we see from the definition of $\alpha_\omega$ that 
	\[
		0 \leq \alpha := \limsup_{\omega \to \infty} \sup_{\Pho \in \Go} \alpha_\omega 
		\leq 2. 
	\]

	We prove \eqref{eq:2.13} by contradiction and suppose $0<\alpha\leq 2$. 
Then, we can take sequences $\{\omega_{n}\}$ and $\{\Phon \}$ 
such that $\lim_{n\to \infty}\omega_{n}=\infty$, $\Phon \in \Gon$ for each $n\ge 1$ and 
$\lim_{n\to \infty}\alpha_{\omega_{n}}=\alpha$. 
Since $\alpha>0$, we may also assume that 
	\[
		\inf_{n\ge 1} \alpha_{\omega_{n}} \ge \frac{\alpha}{2} .
	\]
Combining the above inequality 
with the definition of $\TPhon$, \eqref{eq:2.7} and \eqref{eq:2.8}, 
 we see that 
\begin{equation*}
\begin{split}
\alpha \| \TPhon \|_{L^{2}}^{2}
+
\|\nabla  \TPhon \|_{L^{2}}^{2}
&
\le
 2\alpha_{n} \| \TPhon \|_{L^{2}}^{2}
+
\|\nabla  \TPhon \|_{L^{2}}^{2}
\\[6pt]
&\lesssim 
\omega_{n}\| \Phi_{\omega_{n}} \|_{L^{2}}^{2}
+
\|\nabla \Phi_{\omega_{n}} \|_{L^{2}}^{2}
\lesssim
1+ 
\|\nabla W\|_{L^{2}}^{2}. 
\end{split} 
\end{equation*}
Hence, $\{\TPhon \}$ is bounded in $H^{1}(\mathbb{R}^{d})$. We also have $\|\TPhon\|_{L^{\infty}}=\TPhon(0)=1$ (see \eqref{eq:2.10}).

	Since $\TPhon$ satisfies equation \eqref{eq:1.9} with $\omega = \omega_n$ 
and $\beta_{\omega_n} \to 0$ as $n \to \infty$ 
due to \eqref{eq:2.12}, 
we find from the $W^{2,q}$ estimate and Schauder's estimate (see \cite{GT}) that 
there exists a subsequence of $\{\TPhon \}$ (still denoted by the same symbol) 
and a radial function $\widetilde{\Phi} \in H^{1}(\mathbb{R}^{d})$ such that 
	\begin{equation*}
		\lim_{n\to \infty}\TPhon = \widetilde{\Phi} \quad 
		\text{weakly in $H^1(\Rd)$ and strongly in 
		$C^2_{\rm loc}(\Rd)$}
\end{equation*}
and 
\begin{equation}\label{eq:2.15}
\left\{ \begin{array}{l}
-\Delta \widetilde{\Phi} + \alpha \widetilde{\Phi} 
= \widetilde{\Phi}^{\frac{d+2}{d-2}} \quad {\rm in} \ \Rd,
\\[6pt]
\widetilde{\Phi}(0)=1.
\end{array} \right. 
\end{equation}
On the other hand, Pohozaev's identity associated with the equation in \eqref{eq:2.15} implies that if $\alpha>0$, then  $\widetilde{\Phi} \equiv 0$ 
(see, e.g., \cite[Section 2.2]{Berestycki-Lions}). This is a contradiction. Thus, $\alpha=0$ and we have completed the proof of the lemma. 
\end{proof}
Now, we are ready to prove Proposition \ref{proposition:2.1}.

\begin{proof}[Proof of Proposition \ref{proposition:2.1}]
By contradiction, assume that there exist a constant $\varepsilon_{0}>0$,  
a sequence $\{ \omega_n \}$ in $(0,\infty)$ and 
a sequence $\{\TPhon \}$ such that 
$\lim_{n\to \infty}\omega_{n}=\infty$, $\TPhon \in \TGon$ and 
\begin{equation}\label{eq:2.16}
\lim_{n\to \infty}\| \TPhon - W \|_{\dH}
\ge \varepsilon_{0}.
\end{equation}
We remark that $\{\TPhon\}$ is bounded in $\dHRd$, 
$\|\TPhon\|_{L^{\infty}}=\TPhon(0)=1$, 
$\TPhon$ is a positive solution to \eqref{eq:1.9} with $\omega=\omega_{n}$ 
and $\lim_{n\to \infty}\alpha_{\omega_{n}}=\lim_{n\to \infty}\beta_{\omega_{n}}= 0$.  
Hence, as in the proof of Lemma \ref{proposition:2.3}, we can verify that 
there exist a subsequence of $\{\TPhon\}$ (still denoted by the same symbol) and a radial function $\widetilde{\Phi} \in \dot{H}^{1}(\mathbb{R}^{d})$ such that 
\begin{equation*}
\lim_{n\to \infty}\TPhon = \widetilde{\Phi} 
\quad 
\mbox{weakly in $\dHRd$ and strongly in $C^2_{\rm loc}(\Rd)$} 
\end{equation*}
and 
\begin{equation}\label{eq:2.17}
\left\{ \begin{array}{l}
\Delta \widetilde{\Phi}+\widetilde{\Phi}^{\frac{d+2}{d-2}}=0 \quad {\rm in} \ \Rd,
\\[6pt]
\widetilde{\Phi} (0) = 1
.
\end{array} \right.  
\end{equation}
From the uniqueness of radial solutions to the problem \eqref{eq:2.17} 
(see \cite{CGS}), it follows that $\widetilde{\Phi} = W$.

 On the other hand, we see from the weak lower semicontinuity of the $\dot{H}^{1}$-norm and Lemma \ref{proposition:2.2} that 
\begin{equation*}
\| W \|_{\dH} = \| \widetilde{\Phi} \|_{\dH} 
\le 
\liminf_{n\to \infty}\| \TPhon \|_{\dH}
\le \| W \|_{\dH}
\end{equation*}  
and therefore 
\begin{equation}\label{eq:2.18}
\lim_{n\to \infty}\| \TPhon \|_{\dH}= \| \widetilde{\Phi} \|_{\dH}=\|W \|_{\dH}
.
\end{equation}  
Combining the weak convergence in $\dot{H}^{1}(\mathbb{R}^{d})$, $\widetilde{\Phi}=W$ and \eqref{eq:2.18}, we find that 
\begin{equation*}
\lim_{n\to \infty} \TPhon = W
\quad 
\mbox{strongly in $\dHRd$}.
\end{equation*}  
However, this contradicts \eqref{eq:2.16} and we have completed the proof. 
\end{proof}

%%%%%%%%%%%%%%%%%%%%%%%%%%%%%%%%%%%%%%%%%%%%%%%%%%%%%%%%%%%%%%%%%%%%%%%%%%%%%%

\section{Uniform decay estimate}\label{section:3}

In this section, we discuss uniform decay properties of the rescaled ground states. 
In particular, we aim to derive the following crucial uniform decay estimate:    
\begin{proposition}\label{proposition:3.1}
Assume $d\ge 3$, $1<p<\frac{d+2}{d-2}$ and $\Go \neq \emptyset$. 
Then, there exist two constants $\omega_{dec}>0$ and  $C_{dec}>0$ such that for any $\omega >\omega_{dec}$ and any 
$x\in \mathbb{R}^{d}$, 
\begin{equation*}
\sup_{\TPho \in \TGo} \widetilde{\Phi}_{\omega}(x) \le C_{dec} \left(1 +|x| \right)^{-(d-2)}
. 
\end{equation*}
\end{proposition}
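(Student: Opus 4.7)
The plan is to convert the requested decay at infinity into an interior $L^\infty$ estimate via the Kelvin transform, and then to run a Moser-type iteration on a fixed ball. For each $\TPho \in \TGo$, I would define
\[
v_\omega(x) := |x|^{-(d-2)} \TPho\!\left( \frac{x}{|x|^2} \right), \quad x \in \Rd \setminus \{0\}.
\]
Using the standard conformal identity $\Delta\!\left( |x|^{2-d} u(x/|x|^2) \right) = |x|^{-(d+2)} (\Delta u)(x/|x|^2)$ together with equation \eqref{eq:1.9}, one checks that $v_\omega$ satisfies
\[
-\Delta v_\omega + \alpha_\omega |x|^{-4} v_\omega = \beta_\omega |x|^{(d-2)p - (d+2)} v_\omega^p + v_\omega^{\frac{d+2}{d-2}} \quad \text{on } \Rd \setminus \{0\}.
\]
The change of variables $y = x/|x|^2$ has Jacobian $|y|^{-2d}$, which cancels exactly against the weight $|x|^{-(d-2)\cdot 2^\ast}$, so the Kelvin transform is an $L^{2^\ast}$-isometry. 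Combined with Proposition \ref{proposition:2.1} and the Sobolev embedding $\dHRd \hookrightarrow L^{2^\ast}(\Rd)$, this gives $\sup_{\TPho \in \TGo}\|v_\omega\|_{L^{2^\ast}(\Rd)} < \infty$ for $\omega$ large.

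Since $\alpha_\omega > 0$, $v_\omega$ is a positive subsolution on $B_1$:
\[
-\Delta v_\omega \leq v_\omega^{\frac{d+2}{d-2}} + \beta_\omega |x|^{-\sigma} v_\omega^p, \qquad \sigma := (d+2) - (d-2)p \in (0, 4).
\]
I would then apply the Moser iteration of Appendix \ref{section:B} to derive $\sup_{\omega > \omega_{dec}} \sup_{\TPho \in \TGo} \|v_\omega\|_{L^\infty(B_{1/2})} \leq C$. The critical term $v_\omega^{(d+2)/(d-2)}$ is handled by the standard Brezis--Kato strategy: write the coefficient as $v_\omega^{4/(d-2)} \in L^{d/2}(B_1)$ and feed the uniform $L^{2^\ast}$ bound into the iteration. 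For the weighted subcritical term, the crucial point is that $\beta_\omega = M_\omega^{p-1-4/(d-2)} \to 0$ as $\omega \to \infty$ (because $p - 1 < \frac{4}{d-2}$ and $M_\omega \to \infty$ by Lemma \ref{proposition:2.3}), which is what should allow one to absorb this term as a small perturbation.

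Once the uniform $L^\infty$ bound on $v_\omega$ near the origin is in hand, the conclusion is routine. For $|y| \geq 2$ the point $y/|y|^2$ lies in $B_{1/2}$, so
\[
\TPho(y) = |y|^{-(d-2)} v_\omega\!\left( \frac{y}{|y|^2} \right) \leq C\, |y|^{-(d-2)}.
\]
For $|y| \leq 2$, the monotonicity in Proposition \ref{proposition:1.2}(iii) yields $\TPho(y) \leq \TPho(0) = 1$. Combining the two ranges gives the bound $\TPho(y) \leq C_{dec}(1 + |y|)^{-(d-2)}$.

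The main technical obstacle lies in the Moser iteration, specifically in the weighted subcritical contribution. The singular factor $|x|^{-\sigma}$ may have exponent $\sigma \in [2, 4)$, in which case $|x|^{-\sigma} \notin L^{d/2}_{\rm loc}$, placing this term outside the scope of the standard Brezis--Kato/Moser framework where coefficients are assumed to lie in $L^{d/2}$. Arranging the H\"older decomposition in the iteration step so that the vanishing of $\beta_\omega$ compensates for the singularity of $|x|^{-\sigma}$ uniformly in the iteration index (or alternatively first separating a neighborhood of the origin from an outer region in which the weight is bounded) is the hard part, and this is precisely where the paper's claim that ``this term is harmless'' needs to be substantiated.
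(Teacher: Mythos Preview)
Your overall strategy---Kelvin transform followed by Moser iteration on a fixed ball---is exactly the paper's approach, and you have correctly isolated the real difficulty: when $\sigma = 4-(d-2)(p-1) \in [2,4)$ the weight $|x|^{-\sigma}$ fails to lie in $L^{d/2}_{\rm loc}$, so the subcritical coefficient $\beta_\omega |x|^{-\sigma} v_\omega^{p-1}$ is not covered by the standard Brezis--Kato framework. However, your proposed fix is incomplete. You suggest that ``the vanishing of $\beta_\omega$ compensates for the singularity of $|x|^{-\sigma}$,'' but this cannot work by itself: for any fixed $\omega$ the integral $\int_{B_r} |x|^{-\sigma d/2}\,dx$ is infinite once $\sigma \ge 2$, so multiplying by a small constant $\beta_\omega^{d/2}$ still gives infinity. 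The smallness of $\beta_\omega$ is only useful \emph{away} from the origin; near the origin you need additional decay of the solution $v_\omega$ itself.

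The missing ingredient is an \emph{a priori} exponential decay of $\TPho$ at infinity (the paper's Lemma~\ref{proposition:3.2}): from \eqref{eq:3.2} one has $\TPho(x) \lesssim \alpha_\omega^{-1/4}|x|^{-(d-1)/2}$, and a comparison argument on $\{|x| \ge L_0 \alpha_\omega^{-1/2}\}$ upgrades this to $\TPho(x) \lesssim \alpha_\omega^{(d-2)/4} e^{-\sqrt{\alpha_\omega}\,|x|/2}$. Under the Kelvin transform this becomes $v_\omega(x) \lesssim \alpha_\omega^{(d-2)/4}|x|^{-(d-2)} e^{-\sqrt{\alpha_\omega}/(2|x|)}$ for $|x| \le \sqrt{\alpha_\omega}/L_0$, i.e.\ $v_\omega$ vanishes faster than any power at the origin. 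Splitting the ball at radius $\sim \sqrt{\alpha_\omega}$, the exponential factor annihilates the singular weight on the inner piece, while on the outer piece H\"older's inequality together with the relation $\beta_\omega^{d/2}\alpha_\omega^{-d\sigma/8} = \omega^{-d\sigma/8} \to 0$ handles the rest (this is Lemma~\ref{proposition:3.4}). The paper then runs a two-stage iteration: first Proposition~\ref{proposition:B.1}(i) to get $\|v_\omega^q\|_{H^1(B_1)}$ bounded for every $q$, then a Strauss-type bound to obtain $v_\omega(x) \lesssim |x|^{-(d-2)/(2q)}$ near $0$, and finally Proposition~\ref{proposition:B.1}(ii) with $b \in L^{q_0}$ for some $q_0 > d/2$ (which, in the range $\sigma \ge 2$, again requires the exponential decay on the inner region). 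Without the exponential decay input your argument stalls at precisely the point you flagged.
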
  
A proof of Proposition \ref{proposition:3.1} will be given in Section \ref{subsection:3.3}.  First, we derive the following convergence result from Propositions \ref{proposition:2.1} and \ref{proposition:3.1}:
\begin{corollary}\label{theorem:3.1}
Assume $d\ge 3$, $1<p<\frac{d+2}{d-2}$ and $\Go \neq \emptyset$. 
Then, for any $q> \frac{d}{d-2}$, we have 
\begin{equation*}
\lim_{\omega \to \infty} \sup_{\TPho \in \TGo} \| \widetilde{\Phi}_{\omega} -W \|_{L^{q}}=0
.
\end{equation*}
\end{corollary}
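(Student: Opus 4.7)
The plan is to combine the $\dot{H}^1$--convergence of Proposition \ref{proposition:2.1} with the uniform pointwise decay of Proposition \ref{proposition:3.1} and conclude via a Lebesgue dominated convergence argument. I would argue by contradiction: suppose there exist $q > \frac{d}{d-2}$, $\varepsilon_0 > 0$, a sequence $\omega_n \to \infty$ and $\TPhon \in \TGon$ with
\[
\|\TPhon - W\|_{L^q} \ge \varepsilon_0 \qquad \text{for every } n.
\]
For $\omega_n > \omega_{dec}$, Proposition \ref{proposition:3.1} combined with the explicit form \eqref{eq:1.10} of $W$ (which itself satisfies $W(x) \lesssim (1+|x|)^{-(d-2)}$) yields a constant $C$ independent of $n$ such that $|\TPhon(x) - W(x)| \le C(1+|x|)^{-(d-2)}$ for every $x \in \Rd$. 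Since $q(d-2) > d$, the majorant $(1+|x|)^{-(d-2)}$ belongs to $L^q(\Rd)$; this is exactly why the restriction $q > \frac{d}{d-2}$ is imposed.

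Next I would upgrade the sequence $\{\TPhon\}$ from $\dot{H}^1$--convergence to pointwise convergence. Proceeding as in the proof of Proposition \ref{proposition:2.1}, the equation \eqref{eq:1.9}, the uniform bound $\|\TPhon\|_{L^\infty} = 1$, and the asymptotics $\alpha_{\omega_n}, \beta_{\omega_n} \to 0$ provided by Lemma \ref{proposition:2.3} together with \eqref{eq:2.12} allow $W^{2,q}_{\rm loc}$ and Schauder estimates to extract a subsequence (still denoted $\{\TPhon\}$) converging to some radial positive $\widetilde{\Phi} \in \dHRd$ in $C^2_{\rm loc}(\Rd)$, and the CGS classification identifies the limit as $W$. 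In particular, $\TPhon(x) \to W(x)$ for every $x \in \Rd$.

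Finally, the pointwise convergence together with the uniform $L^q$--integrable majorant $C(1+|x|)^{-(d-2)}$ allows Lebesgue's dominated convergence theorem to give $\|\TPhon - W\|_{L^q} \to 0$, contradicting the standing assumption $\|\TPhon - W\|_{L^q} \ge \varepsilon_0$. The substantive step in this argument is the construction of the uniform $L^q$ majorant, which is precisely the content of Proposition \ref{proposition:3.1}; once the decay estimate is in hand, the remainder is a routine Vitali/dominated convergence contradiction together with the extraction of a $C^2_{\rm loc}$--convergent subsequence already available from Section \ref{section:2}.
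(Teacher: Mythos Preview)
Your argument is correct, but it follows a different route from the paper's proof. The paper proceeds directly, without contradiction: first, Sobolev's inequality applied to Proposition~\ref{proposition:2.1} gives uniform convergence in $L^{2^*}$; for $q>2^*$ the $L^\infty$ bound $\|\TPho\|_{L^\infty}=\|W\|_{L^\infty}=1$ immediately upgrades $L^{2^*}$ convergence to $L^q$ convergence; and for $\frac{d}{d-2}<q<2^*$ the paper fixes $q_0\in(\frac{d}{d-2},q)$, uses Proposition~\ref{proposition:3.1} to obtain a uniform $L^{q_0}$ bound, and interpolates via H\"older between $L^{q_0}$ and $L^{2^*}$. Your approach instead uses Proposition~\ref{proposition:3.1} to produce a single $L^q$ dominating function $C(1+|x|)^{-(d-2)}$, re-extracts a $C^2_{\rm loc}$--convergent subsequence (already available from Section~\ref{section:2}), and concludes by dominated convergence. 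The advantage of your approach is that it handles all $q>\frac{d}{d-2}$ in one stroke; the advantage of the paper's approach is that it avoids re-invoking elliptic regularity and the CGS classification, working entirely with norm inequalities already established. Both arguments rest on Proposition~\ref{proposition:3.1} as the substantive input.
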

\begin{proof}
Note first that Proposition \ref{proposition:2.1} together with Sobolev's inequality gives us
\begin{equation}\label{eq:3.1}
\lim_{\omega \to \infty}\sup_{\TPho \in \TGo} \| \TPho - W \|_{L^{2^\ast}} = 0
.
\end{equation}
Moreover, it follows from \eqref{eq:2.10} that  for each $\omega>0$ and $q > 2^\ast$, 
	\[
		\| \TPho - W \|_{L^q}^q \leq \int_{\Rd} 2^{q-2^\ast} 
		| \TPho - W |^{2^\ast} d x = 2^{q-2^\ast} \| \TPho - W \|_{L^{2^\ast}}^{2^\ast}.
	\]
Hence, we find that for any $q > 2^\ast $, 
\begin{equation*}
		\lim_{\omega \to \infty}
\sup_{\TPho \in \TGo} \| \TPho -W \|_{L^{q}} 
=0.
\end{equation*}

	Next, assume $\frac{d}{d-2}<q<2^{*}$ and fix $q_{0} \in (\frac{d}{d-2}, q)$. 
From Proposition \ref{proposition:3.1} we see that for any sufficiently large $\omega$, 
	\[ 
		\sup_{\TPho \in \TGo} \| \TPho \|_{L^{q_0}} \lesssim 1,
	\]
where the implicit constant depends on $q_{0}$. 
Furthermore, by H\"older's inequality and \eqref{eq:3.1}, we get 
	\begin{equation*}
		\lim_{\omega \to \infty} \sup_{\TPho \in \TGo} \| \TPho - W \|_{L^{q}} 
\lesssim
\lim_{\omega \to \infty} \sup_{\TPho \in \TGo} 
\| \TPho - W \|_{L^{2^{*}}}^{\frac{2^{*}(q-q_{0})}{q(2^{*}-q)}} 
= 0.
	\end{equation*}
Thus, we have completed the proof of Corollary \ref{theorem:3.1}. 
\end{proof}

%%%%%%%%%%%%%%%%%%%%%%%%%%%%%%%%%%%%%%%%%%%%%%%%%%%%%%%%%%%%%%%%%%%%%%%%%%%%%%%%

\subsection{Exponential decay estimate}\label{subsection:3.1}

In this subsection, we derive an exponential decay estimate which we need 
 in the proof of Proposition \ref{proposition:3.1}. Let us begin with rephrasing  the estimate \eqref{eq:2.9} 
 in terms of the rescaled ground state: 
 for every $\omega>0$, $\TPho \in \TGo$ and $x \in \mathbb{R}^{d}\setminus \{0\}$, 
\begin{equation}\label{eq:3.2}
\begin{split}
\widetilde{\Phi}_{\omega}(x)
&=
M_{\omega}^{-1}\Phi_{\omega}(M_{\omega}^{-\frac{2}{d-2}}x)
\\[6pt]
&\lesssim
M_{\omega}^{-1} \omega^{-\frac{1}{4}}( M_{\omega}^{-\frac{2}{d-2}}|x|)^{-\frac{d-1}{2}}
= \alpha_\omega^{-\frac{1}{4}}|x|^{-\frac{d-1}{2}}.
\end{split}
\end{equation}

Next, we state the main result in this subsection:  
\begin{lemma}\label{proposition:3.2}
Assume $d\ge 3$, $1<p<\frac{d+2}{d-2}$ and $\Go \neq \emptyset$. 
Then, there exist constants $L_{0}>0$ and $C_0>0$ such that for any $ \omega >0$ and $|x| \geq L_{0} \alpha_\omega^{-\frac{1}{2}}$, 
\begin{equation}\label{eq:3.3}
\sup_{\TPho \in \TGo} \widetilde{\Phi}_{\omega}(x)
\leq 
C_{0} \alpha_\omega^{\frac{d-2}{4}} e^{-\frac{\sqrt{\alpha_\omega}}{2}|x|}.
\end{equation}
\end{lemma}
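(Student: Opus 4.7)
My plan is to obtain the exponential bound via a comparison argument in the exterior region $\{|x| \geq R_\omega\}$, where $R_\omega := L_0 \alpha_\omega^{-1/2}$. I would rewrite equation \eqref{eq:1.9} in the Schr\"odinger form $-\Delta \TPho + V_\omega \TPho = 0$ with effective potential
$$V_\omega(x) := \alpha_\omega - \beta_\omega \TPho^{p-1}(x) - \TPho^{\frac{4}{d-2}}(x),$$
and use as a supersolution the radial barrier $\psi(x) := C_0 \alpha_\omega^{(d-2)/4} e^{-\mu |x|}$ with $\mu := \sqrt{\alpha_\omega}/2$.

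The first task is to show that $V_\omega(x) \geq \alpha_\omega/4$ throughout $\{|x| \geq R_\omega\}$ once $L_0$ and $\omega$ are sufficiently large. This uses the pointwise bound \eqref{eq:3.2}, which in the rescaled variable reads $\TPho(x) \lesssim \alpha_\omega^{-1/4}|x|^{-(d-1)/2}$. Substituting $|x| \geq R_\omega$ gives $\TPho^{4/(d-2)}(x) \lesssim L_0^{-2(d-1)/(d-2)}\alpha_\omega$, which is absorbed into $\alpha_\omega/8$ by choosing $L_0$ large. A parallel estimate together with the identities $\alpha_\omega = \omega M_\omega^{-4/(d-2)}$ and $\beta_\omega = M_\omega^{p-1-4/(d-2)}$ yields $\beta_\omega \TPho^{p-1}(x) \lesssim L_0^{-(p-1)(d-1)/2}\,\omega^{-(1-(p-1)(d-2)/4)}\alpha_\omega$, which is likewise $\leq \alpha_\omega/8$ for $\omega$ large, thanks to the strict subcriticality $p - 1 < 4/(d-2)$.

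A radial calculation then shows that on $\{|x| \geq R_\omega\}$,
$$-\Delta \psi + V_\omega \psi = \Bigl(V_\omega - \mu^2 + \frac{(d-1)\mu}{|x|}\Bigr)\psi \geq \Bigl(\frac{\alpha_\omega}{4} - \mu^2\Bigr)\psi = 0,$$
so $\psi$ is a classical supersolution of the linearized equation. For the boundary comparison on $|x| = R_\omega$, estimate \eqref{eq:3.2} gives $\TPho \lesssim L_0^{-(d-1)/2}\alpha_\omega^{(d-2)/4}$ while $\psi = C_0 \alpha_\omega^{(d-2)/4} e^{-L_0/2}$, and the inequality $\TPho \leq \psi$ on this sphere is secured by choosing $C_0$ proportional to $L_0^{-(d-1)/2} e^{L_0/2}$.

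Since both $\TPho$ and $\psi$ tend to zero at infinity and the operator $-\Delta + V_\omega$ has a positive coefficient on the exterior domain, I would apply the weak maximum principle to $w := \TPho - \psi$ on $\{|x| > R_\omega\}$ to conclude $\TPho \leq \psi$ everywhere there, which is the claim. The step I expect to be most delicate is the uniform control of the subcritical contribution $\beta_\omega \TPho^{p-1}$: unlike the critical term, its smallness relative to $\alpha_\omega$ is not automatic from \eqref{eq:3.2} alone and requires combining \eqref{eq:3.2} with the scaling relations linking $\alpha_\omega$, $\beta_\omega$ and $M_\omega$.
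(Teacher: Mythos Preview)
Your approach is essentially the same as the paper's: both establish $-\Delta \TPho + c\,\alpha_\omega \TPho \le 0$ on the exterior $\{|x|\ge L_0\alpha_\omega^{-1/2}\}$ via the pointwise bound \eqref{eq:3.2} and the scaling relations among $\alpha_\omega,\beta_\omega,\omega$, and then conclude by comparison with an exponential barrier. The only technical difference is in the barrier itself: the paper works on the bounded annulus $L_0\alpha_\omega^{-1/2}\le |x|\le R$ with the two-sided function $\psi_R(r)=e^{-\frac{\sqrt{\alpha_\omega}}{2}(r-L_0\alpha_\omega^{-1/2})}+e^{\frac{\sqrt{\alpha_\omega}}{2}(r-R)}$ and lets $R\to\infty$, which forces the extra constraint $L_0\ge 2(d-1)$ to control the first-order radial term (since $\psi_R'$ changes sign); your single decaying exponential has $\psi'\le 0$, so the term $-\tfrac{d-1}{r}\psi'$ is nonnegative and this constraint is avoided, at the cost of invoking the maximum principle on an unbounded domain (which is justified here since $\TPho-\psi\to 0$ at infinity, or equivalently by the same annulus-limit argument).
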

\begin{proof}
Let $\omega>0$, $\TPho \in \TGo $ and $L_{0}>0$ be a large number to be specified later. 
Since $\TPho$ is strictly decreasing in the radial direction by Proposition \ref{proposition:1.2}, 
we see from \eqref{eq:3.2} that if $|x| \geq L_{0} \alpha_{\omega}^{-\frac{1}{2}}$, then 
\begin{equation}\label{eq:3.4}
\TPho(x) \le \TPho ( L_{0} \alpha_\omega^{-\frac{1}{2}} ) 
\lesssim L_{0}^{-\frac{d-1}{2}} 
		\alpha_\omega^{\frac{d-2}{4}}.
\end{equation}

	Next, we rewrite \eqref{eq:1.9} as
	\begin{equation}\label{eq:3.5}
		-\Delta \TPho + \Big( \alpha_\omega - \beta_\omega \TPho^{p-1} 
		- \TPho^{\frac{4}{d-2}} \Big) \TPho = 0 \quad 
		{\rm in} \ \Rd. 
	\end{equation}
We see from \eqref{eq:3.4} and the definitions of $\alpha_{\omega}$ and $\beta_{\omega}$ that  if $|x| \geq L_{0} \alpha_{\omega}^{-\frac{1}{2}}$, then 
	\begin{align} 
\label{eq:3.6}
\beta_{\omega} \TPho^{p-1} (x) 
&\lesssim 
L_{0}^{-\frac{(d-1)(p-1)}{2}}
\alpha_{\omega}^{\frac{(d-2)(p-1)}{4}} 
\beta_{\omega} 
= 
L_{0}^{-\frac{(d-1)(p-1)}{2}} \omega^{-\frac{4-(d-2)(p-1)}{4}}
\alpha_{\omega} 
,
\\[6pt]		
\label{eq:3.7}
\TPho^{\frac{4}{d-2}}(x) 
&\lesssim L_{0}^{-\frac{2(d-1)}{d-2}} \alpha_{\omega}.  
\end{align}
Furthermore, it follows from \eqref{eq:3.5}, \eqref{eq:3.6}, \eqref{eq:3.7} and the assumption $p<\frac{d+2}{d-2}=1+\frac{4}{d-2}$ that 
if we choose a sufficiently large $L_0$ depending only on $d$ and $p$, then 
	\begin{equation}\label{eq:3.8}
		-\Delta \TPho(x) + \frac{\alpha_\omega}{2} \TPho(x) \leq 0  
	\end{equation}
for all $|x| \geq L_{0} \alpha_\omega^{-\frac{1}{2}}$. 
\par 
Now, we shall derive \eqref{eq:3.3} by using the comparison  principle. 
To this end, let $R> L_{0} \alpha_\omega^{-\frac{1}{2}}$ and introduce a positive function $\psi_{R}$ on $\mathbb{R}$ as 
	\begin{equation*}
		\psi_R(r) := \exp \Big( - \frac{\alpha_{\omega}^{\frac{1}{2}}}{2} \big( r - L_{0} \alpha_\omega^{-\frac{1}{2}} \big) \Big) 
		+ 
\exp \Big( \frac{\alpha_{\omega}^{\frac{1}{2}}}{2}  ( r - R ) \Big).
	\end{equation*}
It is easy to verify that  
	\begin{equation}\label{eq:3.9}
		\left|\psi_{R}'(r)\right| \leq \frac{\alpha_{\omega}^{\frac{1}{2}}}{2} \psi_R(r), 
		\quad \psi_{R}''(r) = \frac{\alpha_\omega}{4} \psi_R(r). 
	\end{equation}
We use the same symbol $\psi_{R}$ to denote the radial function $\psi_{R}(|x|)$  on $\mathbb{R}^{d}$. 
Then, we see from \eqref{eq:3.9} that if $L_{0} \ge  2(d-1)$ and $L_{0}\alpha_\omega^{-\frac{1}{2}} < r < R$, then 
	\begin{equation}\label{eq:3.10}
-\Delta \psi_{R}+\frac{\alpha_{\omega}}{2}\psi_{R}
=
-\psi_{R}''-\frac{d-1}{r}\psi_{R}'+\frac{\alpha_{\omega}}{2}\psi_{R} 
\geq 0 .
	\end{equation}
Furthermore, it follows from \eqref{eq:3.4}, $\psi_R( L_{0} \alpha_\omega^{-\frac{1}{2}} ) \geq 1$ and $\psi_R(R)\ge 1$ that 
	\begin{equation}\label{eq:3.11}
		\TPho (R) \le \TPho (L_{0}\alpha_{\omega}^{-\frac{1}{2}})  
		\lesssim 
L_{0}^{-\frac{d-1}{2}} \alpha_\omega^{\frac{d-2}{4}} 
		\min 
\big\{ 
\psi_R( L_{0}\alpha^{-\frac{1}{2}} ), \ \psi_R(R)  
\big\}. 
	\end{equation}
Hence, the comparison principle together with \eqref{eq:3.8}, \eqref{eq:3.10} and \eqref{eq:3.11} implies that 
if $L_{0}\alpha_\omega^{-\frac{1}{2}} \leq |x| \leq R$, then  
	\[
		\TPho (x) \lesssim 
L_{0}^{-\frac{d-1}{2}} \alpha_{\omega}^{\frac{d-2}{4}} \psi_R(|x|).
	\]
Since $R > L_{0}\alpha_{\omega}^{-\frac{1}{2}}$ is arbitrary, 
taking $R \to \infty$, we find that  
	\[
					\TPho (x) 
			\lesssim  L_{0}^{-\frac{d-1}{2}} \alpha_\omega^{\frac{d-2}{4}} 
			\exp \Big( - \frac{\alpha_{\omega}^{\frac{1}{2}}}{2} 
				\big( |x| - L_{0} \alpha_\omega^{-\frac{1}{2}} 
\big) 
\Big) 
			= L_{0}^{-\frac{d-1}{2}} e^{\frac{L_{0}}{2}} 
\alpha_\omega^{\frac{d-2}{4}}  
e^{- \frac{ \sqrt{\alpha_{\omega}}}{2} |x|}
	\]
for all $|x| \geq L_{0} \alpha_\omega^{-\frac{1}{2}}$, which is the desired estimate \eqref{eq:3.3}. 
\end{proof}

%%%%%%%%%%%%%%%%%%%%%%%%%%%%%%%%%%%%%%%%%%%%%%%%%%%%%%%%%%%%%%%%%%%%%%%%%%%%%%%%
\subsection{Kelvin transforms of rescaled ground states}\label{subsection:3.2}

In this subsection, we consider the Kelvin transform of elements in $\TGo$. 
We use $K[u]$ to denote the Kelvin transform of a function $u$, that is, 
\begin{equation*}
K[u](x):= |x|^{-(d-2)} u \left(\frac{x}{|x|^{2}}\right)
.
\end{equation*}
Remark that $\|K[\TPho]\|_{L^{\infty}(B_{1})} \lesssim 1$ implies
\begin{equation*}
\sup_{|x|\ge 1}\TPho(x) \lesssim |x|^{-(d-2)}
.
\end{equation*}
Thus, to prove Proposition \ref{proposition:3.1}, 
it suffices to show that there exists $\omega_{dec}>0$ such that 
\begin{equation}\label{eq:3.12}
\sup_{\omega>\omega_{dec}} \sup_{\TPho \in \TGo} \| K[\TPho]\|_{L^{\infty}(B_{1})} <\infty.
\end{equation}

It is easy to verify that $K[\TPho]$ satisfies 
\begin{equation}\label{eq:3.13}
-\Delta K[\TPho]
+ 
\alpha_{\omega}\frac{1}{|x|^{4}}
K[\TPho]
=
\beta_{\omega}\frac{1}{|x|^{\gamma}}
K[\TPho]^{p}
+
K[\TPho]^{\frac{d+2}{d-2}}
,
\end{equation}
where 
\begin{equation}\label{eq:3.14}
\gamma:=  4 - (d-2)(p-1) > 0.
\end{equation}  
We also see from Lemma \ref{proposition:3.2} that if $|x|\le \alpha_{\omega}^{\frac{1}{2}}  /L_{0}$, then 
\begin{equation}\label{eq:3.15}
K[\TPho](x) \lesssim \alpha_{\omega}^{\frac{d-2}{4}} 
|x|^{-(d-2)} e^{-\frac{\sqrt{\alpha_{\omega}}}{2} |x|^{-1}}
. 
\end{equation}
Furthermore, since the Kelvin transform is linear and preserves the $\dHRd$ norm, we have 
\begin{equation}\label{eq:3.16}
\| K[u] - K[v] \|_{\dH} = \| u - v \|_{\dH}
\end{equation}  
for any $u,v\in \dot{H}^{1}(\mathbb{R}^{d})$. Hence, Proposition \ref{proposition:2.1} leads us to the following result:

\begin{lemma}\label{proposition:3.3}
Assume $d\ge 3$, $1<p<\frac{d+2}{d-2}$ and $\Go \neq \emptyset$. Then, it holds that 
\begin{equation*}
\lim_{\omega\to \infty}\sup_{\TPho \in \TGo} 
\| K [\TPho] - K[W] \|_{\dot{H}^{1}}=0.
\end{equation*}
\end{lemma}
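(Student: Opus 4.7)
The plan is to reduce the claim directly to Proposition \ref{proposition:2.1} by exploiting the two properties of the Kelvin transform that are already recorded in this subsection: its linearity and its isometry property on $\dHRd$.

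First, I would note that $K$ acts linearly on functions, since the definition $K[u](x) = |x|^{-(d-2)} u(x/|x|^{2})$ is linear in $u$. Therefore, for every $\omega > 0$ and every $\TPho \in \TGo$,
\[
K[\TPho] - K[W] = K[\TPho - W].
\]
Next, I would invoke the isometry identity \eqref{eq:3.16} applied to the pair $u = \TPho$, $v = W$, giving
\[
\big\| K[\TPho] - K[W] \big\|_{\dH} = \big\| K[\TPho - W] \big\|_{\dH} = \big\| \TPho - W \big\|_{\dH}.
\]

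Taking the supremum over $\TPho \in \TGo$ and then the limit $\omega \to \infty$, the right-hand side tends to zero by Proposition \ref{proposition:2.1}, which yields the desired conclusion.

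There is no real obstacle here: the lemma is essentially a formal consequence of Proposition \ref{proposition:2.1} together with the conformal invariance of the $\dot{H}^1$-norm under $K$. The only minor point worth mentioning in the write-up is that $\TPho - W \in \dHRd$ (so that \eqref{eq:3.16} applies), which is clear because both $\TPho$ and $W$ belong to $\dHRd$ individually.
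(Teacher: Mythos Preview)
Your proposal is correct and matches the paper's approach exactly: the paper does not even write out a separate proof, but simply notes that the Kelvin transform is linear and preserves the $\dot{H}^1$-norm (recording this as \eqref{eq:3.16}) and then states the lemma as an immediate consequence of Proposition~\ref{proposition:2.1}.
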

In order to use Moser's iteration 
(see Proposition \ref{proposition:B.1}) for \eqref{eq:3.13}, 
we need the following lemma:

\begin{lemma}\label{proposition:3.4}
Assume $d\ge 3$, $1<p<\frac{d+2}{d-2}$ and $\Go \neq \emptyset$. Then, it holds that 
\begin{equation}\label{eq:3.17}
\lim_{\omega \to \infty} \sup_{\TPho \in \TGo} \int_{|x|\le 4}
\Big| \frac{\beta_{\omega}}{|x|^{\gamma}}
K[\TPho]^{p-1}(x)
\Big|^{\frac{d}{2}}dx
=
0.
\end{equation}
\end{lemma}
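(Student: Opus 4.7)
The plan is to reduce the integral to one in $\TPho$-coordinates via the Kelvin involution $y = x/|x|^{2}$. Under this change of variables, $dx = |y|^{-2d}\,dy$, $K[\TPho](x) = |y|^{d-2}\TPho(y)$, and $\{|x|\le 4\}$ maps to $\{|y|\ge 1/4\}$. The total exponent of $|y|$ in the integrand becomes $\gamma d/2 + (d-2)(p-1)d/2 - 2d = 0$, since $\gamma + (d-2)(p-1) = 4$ by \eqref{eq:3.14}. Therefore the left-hand side of \eqref{eq:3.17} equals
\[
\beta_{\omega}^{d/2}\int_{|y|\ge 1/4}\TPho(y)^{(p-1)d/2}\,dy,
\]
and the singular weight $|x|^{-\gamma}$ disappears entirely.

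Next I would bound this tail by splitting at the natural scale $|y| = L_{0}\alpha_{\omega}^{-1/2}$ supplied by Lemma \ref{proposition:3.2}. On the inner annulus $\{1/4\le |y|\le L_{0}\alpha_{\omega}^{-1/2}\}$, apply H\"older's inequality with conjugate exponents $r = 2^{*}/[(p-1)d/2]$ and $r' = 4/\gamma$; admissibility $1/r+1/r'=1$ reduces to $(p-1)(d-2)/4 + \gamma/4 = 1$, another incarnation of \eqref{eq:3.14}. The uniform $L^{2^{*}}$-bound on $\TPho$ from Proposition \ref{proposition:2.1} and the volume estimate $\lesssim \alpha_{\omega}^{-d/2}$ yield a contribution of order $\alpha_{\omega}^{-d\gamma/8}$. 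On the outer region $\{|y|\ge L_{0}\alpha_{\omega}^{-1/2}\}$, rewrite Lemma \ref{proposition:3.2} in $\TPho$-form as $\TPho(y)\lesssim \alpha_{\omega}^{(d-2)/4}e^{-\sqrt{\alpha_{\omega}}|y|/2}$, and after substituting $s = \sqrt{\alpha_{\omega}}|y|$ extract the same order $\alpha_{\omega}^{-d\gamma/8}$.

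Combining the two regions gives $\int_{|y|\ge 1/4}\TPho^{(p-1)d/2}\,dy\lesssim \alpha_{\omega}^{-d\gamma/8}$. Unpacking the definitions of $\alpha_{\omega}$ and $\beta_{\omega}$ from \eqref{eq:1.9} yields
\[
\beta_{\omega}^{d/2}\alpha_{\omega}^{-d\gamma/8}
= M_{\omega}^{\frac{d}{2}\left(p-1-\frac{4}{d-2}\right) + \frac{d\gamma}{2(d-2)}}\,\omega^{-d\gamma/8}
= \omega^{-d\gamma/8},
\]
where the exponent of $M_{\omega}$ collapses to $0$ after one more use of $\gamma = 4-(d-2)(p-1)$. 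Since $\gamma > 0$, the right-hand side tends to $0$ as $\omega\to\infty$, proving the lemma. The main subtlety to watch is that both the H\"older admissibility identity and the cancellation of $M_{\omega}$ in the final product are the single relation $\gamma+(d-2)(p-1)=4$ in disguise; without this tight balance the crude bound $\TPho\le 1$ on the inner annulus would leave a factor $(\beta_{\omega}/\alpha_{\omega})^{d/2}$ that need not vanish, and no $L^{2}$-bound on $\TPho$ is yet available at this point in the paper.
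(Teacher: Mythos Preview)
Your proof is correct and follows essentially the same strategy as the paper's: split at the scale $L_{0}\alpha_{\omega}^{-1/2}$, apply H\"older with exponents dictated by the identity $\gamma+(d-2)(p-1)=4$ on the near region (using the uniform $L^{2^*}$ bound), and invoke the exponential decay of Lemma~\ref{proposition:3.2} on the far region, arriving in both cases at the bound $\omega^{-d\gamma/8}$. The only cosmetic difference is that you first undo the Kelvin transform so the singular weight $|x|^{-\gamma}$ disappears and the H\"older step is taken against the constant function, whereas the paper keeps the $K[\TPho]$-coordinates and places the weight in the second H\"older factor; these are the same computation viewed from either side of the involution $y=x/|x|^{2}$.
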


\begin{proof}
It follows from Lemma \ref{proposition:2.3} that there exists $\omega_{1}>0$ such that 
$ \alpha_\omega \le 1$ for all $\omega \ge \omega_{1}$ and $\TPho \in \TGo$. 
In what follows, we always assume that $\omega>\omega_{1}$. 
Also let $L_0 \geq 1$ be the constant appeared in Lemma \ref{proposition:3.2}. 
We divide the integral into two parts:
\begin{align*}
I_{\omega,in}
&:=
\int_{|x|\le \sqrt{\alpha_\omega}/L_0}
\left| \frac{\beta_{\omega}}{|x|^{\gamma}}
K[\TPho]^{p-1}(x)
\right|^{\frac{d}{2}}
dx,
\\[6pt] 
I_{\omega,out}
&:=
\int_{ \sqrt{\alpha_\omega}/L_0 \le |x|\le 4}
\left| \frac{\beta_{\omega}}{|x|^{\gamma}}
K[\TPho]^{p-1}(x)
\right|^{\frac{d}{2}}
dx.
\end{align*}

	We first show 
	\begin{equation}\label{eq:3.18}
	\lim_{\omega \to \infty} \sup_{\TPho \in \TGo}  I_{\omega,out}=0.
	\end{equation}
To this end, set $s_{0}:=\frac{4}{(d-2)(p-1)}$ and note that thanks to 
$0 < p-1 < \frac{4}{d-2}$, we have $1 < s_0 < \infty$. 
Moreover, since $\frac{d(p-1)s_{0}}{2}=2^{*}$ and $1-\frac{1}{s_{0}}=\frac{\gamma}{4}$, we see from H\"older's inequality and Lemma \ref{proposition:3.3} that   
\[
\begin{aligned}
I_{\omega,out}
&= \beta_\omega^{\frac{d}{2}} 
\int_{ \sqrt{\alpha_\omega} / L_0 \le |x|\le 4} 
|x|^{- \frac{d}{2} \gamma  } K[\TPho]^{ \frac{d(p-1)}{2} }(x)\,dx 
\\[6pt]
&\le 
\beta_\omega^{\frac{d}{2}}
\left\{
\int_{ \sqrt{\alpha_\omega} / L_0 \le |x|\le 4}
K[\TPho]^{2^{*}}(x)\,dx 
\right\}^{1/s_0} 
\left\{
\int_{ \sqrt{\alpha_\omega} / L_0 \le |x|\le 4}
|x|^{-2d} \,dx 
\right\}^{\frac{\gamma}{4}}
\\[6pt]
&\lesssim 
\beta_\omega^{\frac{d}{2}}
\left\{
\int_{\sqrt{\alpha_\omega} / L_0}^{4} 
r^{-d-1} \,dr 
\right\}^{\frac{\gamma}{4}}
\lesssim
\beta_\omega^{\frac{d}{2}} 
\alpha_\omega^{ -\frac{d\gamma}{8} }
=
\omega^{-\frac{d\gamma}{8}} . 
\end{aligned}
\]
Thus, \eqref{eq:3.18} holds.

	Next, we consider $I_{\omega,in}$. We see from \eqref{eq:3.15} and $\gamma + (d-2)(p-1)= 4$ (see \eqref{eq:3.14}) that 
\begin{equation}\label{eq:3.19}
\begin{aligned}
I_{\omega,in}
&= \beta_\omega^{\frac{d}{2}} 
\int_{|x| \le  \sqrt{\alpha_\omega}/ L_0 }
|x|^{-\frac{d\gamma}{2}} K[\TPho]^{\frac{d}{2}(p-1)}(x)\,dx
\\[6pt]
&\lesssim 
\beta_\omega^{\frac{d}{2}} 
\alpha_\omega^{ \frac{d(d-2)(p-1)}{8} } 
\int_{|x| \le \sqrt{\alpha_\omega}/ L_0} 
|x|^{- \frac{d}{2} \{ \gamma +(d-2)(p-1)\} } 
\exp\left( - \frac{d(p-1) \sqrt{\alpha_\omega}}{ 4|x|} \right) dx
\\[6pt]
&\lesssim
\beta_\omega^{\frac{d}{2}} 
\alpha_\omega^{ \frac{d(d-2)(p-1)}{8}} 
\int_0^{\sqrt{\alpha_\omega}/L_0}
r^{-d-1} 
\exp\left( \frac{-d(p-1)\sqrt{\alpha_\omega}}{4r}\right) \,dr 
.
\end{aligned}
\end{equation}
Furthermore, by the change of variables $s = \sqrt{\alpha_\omega} r^{-1}$, 
we find from \eqref{eq:3.19} that  
	\begin{equation*}
		I_{\omega,in} \lesssim 
		\beta_\omega^{\frac{d}{2}} 
		\alpha_\omega^{ \frac{d(d-2)(p-1)}{8} - \frac{d}{2} } 
		\int_{L_0}^\infty s^{d-1} \exp\left( -\frac{d(p-1) s}{4}\right) ds
.
        \end{equation*}
Since $\frac{d}{8} \gamma = \frac{d}{2} - \frac{d(d-2)(p-1)}{8}$, we see that 
\begin{equation*}
\beta_\omega^{\frac{d}{2}} 
		\alpha_{\omega}^{ \frac{d(d-2)(p-1)}{8} - \frac{d}{2} } 
= \beta_\omega^{\frac{d}{2}} \alpha_\omega^{-\frac{d}{8}\gamma} 
= \omega^{-\frac{d\gamma}{8}}.
\end{equation*}
Hence, we conclude that  
\begin{equation}\label{eq:3.20}
\lim_{\omega\to \infty} \sup_{\TPho \in \TGo} I_{\omega, in}=0
.
\end{equation} 
By \eqref{eq:3.18} and \eqref{eq:3.20}, we obtain the desired result \eqref{eq:3.17}. 
\end{proof}

%%%%%%%%%%%%%%%%%%%%%%%%%%%%%%%%%%%%%%%%%%%%%%%%%%%%%%%%%%%%%%%%%%%%%%%%%%%%%%%

\subsection{Proof of Proposition \ref{proposition:3.1}}\label{subsection:3.3}

Now we prove Proposition \ref{proposition:3.1}:

\begin{proof}[Proof of Proposition \ref{proposition:3.1}]
As mentioned in \eqref{eq:3.12}, it suffices to show 
	\[
	\sup_{\omega>\omega_{dec}} \sup_{\TPho \in \TGo} \| K[\TPho]\|_{L^{\infty}(B_{1})} <\infty.
	\]
Recall that $K[\TPho]$ is a solution to \eqref{eq:3.13}. 
We shall prove \eqref{eq:3.12} by applying Proposition \ref{proposition:B.1} to $K[\TPho] $ 
with 
\begin{equation}\label{eq:3.21}
a(x) = \frac{\alpha_\omega}{|x|^4}, \quad 
b(x) 
= 
\frac{\beta_\omega}{|x|^\gamma} K[\TPho]^{p-1}(x) 
 + K[\TPho]^{\frac{4}{d-2}} (x) .
\end{equation}
First, we note that it follows from \eqref{eq:3.15} that for any $v \in H_{0}^{1}(B_{4})$, 
\begin{equation}\label{eq:3.22}
\int_{B_{4}} \frac{\alpha_\omega}{|x|^4} K[\TPho](x) |v(x)|\,dx 
<\infty. 
\end{equation}
By \eqref{eq:3.16}, 
Proposition \ref{proposition:2.1} and Sobolev's inequality, one has
	\[
	\lim_{\omega \to \infty} \sup_{\TPho \in \TGo} 
			\left\| K[\TPho] - K[W] \right\|_{L^{2^\ast}(\Rd)} = 0,
	\]
implying that the family $\{ K [ \TPho ]^{2^\ast} \}$ is uniformly 
integrable. Hence, it is not difficult to check that 
	\begin{equation}\label{eq:3.23}
		\lim_{\omega \to \infty} \sup_{\TPho \in \TGo} 
		\left\| K[\TPho]^{\frac{4}{d-2}} - K[W]^\frac{4}{d-2} \right\|_{L^{\frac{d}{2}}(\Rd)} = 0.
	\end{equation}
Combining \eqref{eq:3.22} and \eqref{eq:3.23} with Lemma \ref{proposition:3.4}, 
we may apply Proposition \ref{proposition:B.1} (i) 
to show that for every $q>1$ there exists $\omega_{dec,q} > 0$ such that 
	\begin{equation}\label{eq:3.24}
		\sup_{\omega > \omega_{dec,q}} \sup_{\TPho \in \TGo } 
		\left\| K[\TPho]^{q} \right\|_{H^1(B_1)} \leq C_{q}. 
	\end{equation}

	Next, let $\eta$ be a non-increasing smooth function on $[0,\infty)$ 
such that $\eta(r)=1$ for $r \leq 1/2$ and $\eta(r)=0$ for $r\ge 1$. 
It is easily seen that for any $x \in \mathbb{R}^{d}\setminus \{0\}$ and any $q>1$, 
\begin{equation}\label{eq:3.25} 
K \big[\eta K[\TPho]^{q} \big] (x)
=
|x|^{-(d-2)} \eta \left( \frac{1}{|x|} \right)
K[\TPho]^{q} \left(\frac{x}{|x|^{2}} \right) 
=
|x|^{(d-2)(q-1)} 
\eta \left( \frac{1}{|x|} \right)\TPho^q (x).
\end{equation}
It follows from \eqref{eq:3.16}, \eqref{eq:3.24} and \eqref{eq:3.25} that for any $q>1$, 
there exists  $C_{q}>0$ such that 
\begin{equation}\label{eq:3.26} 
\sup_{\omega > \omega_{dec,q}} \sup_{\TPho \in \TGo }
\left\|  |x|^{(d-2)(q-1)}\eta \left(\frac{1}{|x|}\right) \TPho^{q}  \right\|_{\dH} 
= \sup_{\omega > \omega_{dec,q}} \sup_{\TPho \in \TGo }\left\| \eta  K[\TPho]^{q} \right\|_{\dH}
\le C_{q}
.
\end{equation}
Furthermore, by \cite[Lemma A.III]{Berestycki-Lions} and \eqref{eq:3.26}, it holds that 
	\[
	\sup_{\omega > \omega_{dec,q}} \sup_{\TPho \in \TGo } 
	\left( |x|^{(d-2)(q-1)} \TPho^{q}(x) \right) 
 \lesssim C_{q} |x|^{-\frac{d-2}{2}} 
	\] 
for all $|x|\ge 2$. This implies that for any $q>1$ and any $|x|\le 1/2$, 
	\begin{equation}\label{eq:3.27}
		\sup_{\omega > \omega_{dec,q}} \sup_{\TPho \in \TGo } K[\TPho](x) \lesssim 
		C_{q}|x|^{-\frac{d-2}{2q}}.
	\end{equation}

	To prove \eqref{eq:3.12}, we shall apply Proposition \ref{proposition:B.1} (ii). 
Since we have \eqref{eq:3.21}, \eqref{eq:3.22} and \eqref{eq:3.24}, what remains to prove is that there exist $q_0>d/2$ and 
$\omega_{dec} > 0$ such that 
	\begin{equation}\label{eq:3.28}
		\sup_{\omega > \omega_{dec}} \sup_{\TPho \in \TGo } 
		\int_{|x| \leq 4}
		\left| \frac{\beta_\omega}{|x|^\gamma} K[\TPho]^{p-1} \right|^{q_0}  dx 
		\le C_{q_0}.
	\end{equation}

To this end, 
we divide the proof of \eqref{eq:3.28} into two cases. 

\medskip

\noindent 
{\bf Case 1:} $\frac{d}{d-2}<p<\frac{d+2}{d-2}$.

\medskip

	We first remark that the condition 
$\frac{d}{d-2} < p < \frac{d+2}{d-2}$ implies $ 0 < \gamma < 2 $. 
Therefore, we may choose $q_0>d/2$ and sufficiently large $q > 1$ such that 
	\[
		\left[ \gamma + \frac{(d-2)(p-1)}{2q} \right] q_0 < d. 
	\]
It follows from \eqref{eq:3.27} that 
	\[
		\sup_{\omega > \omega_{dec,q}} \sup_{\TPho \in \TGo } 
		\left(\frac{1}{|x|^\gamma} K[ \TPho ]^{p-1} (x)\right)^{q_0} 
		\lesssim  C_q |x|^{ - \gamma q_0 - \frac{(d-2)(p-1)}{2q} q_0 } \in L^1(B_{1/2}). 
	\]
From this, \eqref{eq:3.28} holds.

\medskip

\noindent
{\bf Case 2:} $1<p\le \frac{d}{d-2}$.

\medskip

We remark that $1 < p \leq \frac{d}{d-2}$ gives $ 2 \leq \gamma < 4$ and 
$\frac{d}{2} < \frac{2d}{\gamma}$. 
Let $q_0 \in ( \frac{d}{2} , \frac{2d}{\gamma}  )$ 
and we claim that \eqref{eq:3.28} holds for this $q_0$. 
For this purpose, we remark that 
by $1 - \frac{d(p-1)}{2} < 1$, there exist $\theta > \frac{1}{2}$ and $s_1 > 1$ so that 
	\begin{equation}\label{eq:3.29}
		\frac{4d \theta }{4\theta -1}>\gamma q_{0}, \quad 
		\frac{4d \theta }{(4\theta -1)\gamma q_{0}} \ge s_{1}>1, \quad 
		\frac{d(p-1)}{2} s_1 > s_1 - 1. 
	\end{equation}
Next, we divide the integral into two parts:
\begin{align*}
J_{\omega,out}
&:=
\int_{ \alpha_{\omega}^{\theta} \le |x|\le 4}
\Big| \frac{\beta_{\omega}}{|x|^{\gamma}}
K[\TPho]^{p-1}
\Big|^{q_{0}}
dx ,
\\[6pt]
J_{\omega,in}
&:=
\int_{|x|< \alpha_{\omega}^{\theta}}
\Big| \frac{\beta_{\omega}}{|x|^{\gamma}}
K[\TPho]^{p-1}
\Big|^{q_{0}}
dx .
\end{align*}

	We first consider $J_{\omega,out}$. 
Since $\frac{(p-1)q_{0}s_{1}}{s_{1}-1}>1$ holds due to $q_0 > \frac{d}{2}$ and \eqref{eq:3.29}, 
by H\"older's inequality and \eqref{eq:3.24}, we see that  
\begin{equation*}
\begin{split}
J_{\omega,out}
&= \beta_{\omega}^{q_{0}} 
\int_{ \alpha_{\omega}^{\theta} \le |x|\le 4} 
|x|^{- \gamma q_{0} } 
K[\TPho]^{(p-1)q_{0}}\,dx 
\\[6pt]
&\le 
\beta_{\omega}^{q_{0}}
\left\{
\int_{|x|\le 4}
K[\TPho]^{\frac{(p-1) q_{0} s_{1}}{s_{1}-1}}
\,dx 
\right\}^{1-\frac{1}{s_{1}}} 
\left\{
\int_{ \alpha_{\omega}^{\theta} \le |x|\le 4}
|x|^{- \gamma q_{0} s_{1}} \,dx 
\right\}^{\frac{1}{s_{1}}}
\\[6pt]
&\le C(q_{0},s_{1}) 
\beta_\omega^{q_{0}}
\left\{
\int_{\alpha_{\omega}^{\theta}}^{4} 
r^{-\gamma q_{0} s_{1}+d-1} \,dr 
\right\}^{\frac{1}{s_{1}}}
\end{split}
\end{equation*} 
where $C(q_{0}, s_{1})>0$ depends only on $d$, $p$, $q_{0}$ and $s_{1}$. 
Recalling the definitions of $\alpha_{\omega}$, $\beta_{\omega}$ and $\gamma$, 
we obtain 
\begin{equation}\label{eq:3.30}
\begin{aligned}
J_{\omega,out} 
&\le  
C(q_{0}, s_{1})\beta_{\omega}^{q_{0}} 
\alpha_{\omega}^{\frac{(-\gamma q_{0} s_{1}+d)\theta}{ s_{1}}}
=
C(q_{0}, s_{1})
M_{\omega}^{-\frac{\gamma q_{0}}{d-2}}
\omega^{\frac{(-\gamma q_{0}s_{1} + d)\theta}{s_{1}}}
M_{\omega}^{\frac{4\theta \gamma q_{0} s_{1} - 4 d\theta}{(d-2)s_{1}}}
\\[6pt]
&=
C(q_{0}, s_{1})
\omega^{\frac{(-\gamma q_{0}s_{1} + d)\theta}{s_{1}}}
M_{\omega}^{-\frac{(4\theta -1) \gamma q_{0}}{(d-2) s_{1}}
\big\{ \frac{4d\theta}{(4\theta -1)\gamma q_{0}} - s_{1} \big\}}
.
\end{aligned}
\end{equation}
Since $2 \leq \gamma < 4$ and $s_1 > 1$ imply $-\gamma q_0 s_1 + d < 0$, 
\eqref{eq:3.30} and \eqref{eq:3.29} yield
\begin{equation}\label{eq:3.31}
\lim_{\omega \to \infty} \sup_{\TPho \in \TGo}  J_{\omega,out}=0.
\end{equation}

	Next, we consider $J_{\omega,in}$. 
Since $\theta > \frac{1}{2}$ and $\lim_{\omega \to \infty}\alpha_\omega = 0$, 
we may assume $\alpha^{-\theta}_\omega \leq L_0\alpha_\omega^{-\frac{1}{2}}$. 
Thus, from Lemma \ref{proposition:3.2} (or \eqref{eq:3.15}) 
and the definition of $\gamma$, it follows that 
\begin{equation}\label{eq:3.32}
\begin{aligned}
J_{\omega,in}
&= 
\beta_{\omega}^{q_{0}} 
\int_{|x| \le  \alpha_{\omega}^{\theta}  }
|x|^{-\gamma q_{0}} 
K[\TPho]^{(p-1)q_{0}} \,dx
\\[6pt]
&\lesssim 
\beta_{\omega}^{q_{0}} 
\alpha_\omega^{ \frac{(d-2)(p-1)q_{0}}{4} } 
\int_{|x| \le \alpha_{\omega}^{\theta}} 
|x|^{-4 q_{0}} 
\exp\left( - \frac{(p-1)q_{0} \sqrt{\alpha_\omega} }{ 2 |x|} \right) dx
\\[6pt]
&\lesssim 
\beta_{\omega}^{q_{0}} 
\alpha_\omega^{ \frac{(d-2)(p-1)q_{0}}{4} } 
\int_{0}^{\alpha_{\omega}^{\theta}} 
s^{- 4q_{0}+d-1} 
\exp\left( - \frac{(p-1)q_{0} \sqrt{\alpha_\omega}}{2 s} \right) \,ds 
.
\end{aligned}
\end{equation}
Using the change of variables $t = \alpha_{\omega}^{\theta} s^{-1}$, 
we find from  $\theta>1/2$ that 
\begin{equation}\label{eq:3.33}
\begin{split}
J_{\omega,in} 
&\lesssim 
\beta_{\omega}^{q_{0}} 
\alpha_{\omega}^{\frac{(d-2)(p-1)q_{0}}{4}-(4q_{0}-d)\theta} 
\int_{1}^{\infty} 
t^{4q_{0}-d-1} 
\exp\left( -\frac{(p-1)q_{0} \alpha_{\omega}^{-(\theta-1/2)}}{2} t\right)\, dt
\\[6pt]
&\le
C(q_{0}, \theta) 
\beta_{\omega}^{q_{0}} 
\alpha_{\omega}^{\frac{(d-2)(p-1)q_{0}}{4}-(4q_{0}-d)\theta} 
\int_{1}^{\infty} 
\exp\left( -\frac{(p-1)q_{0} \alpha_{\omega}^{-(\theta-1/2)}}{4} t\right)\, dt
\\[6pt]
&\le 
C(q_{0}, \theta) 
\beta_{\omega}^{q_{0}} 
\alpha_{\omega}^{\frac{(d-2)(p-1)q_{0}}{4}-(4q_{0}-d)\theta +\theta -\frac{1}{2}} 
\exp\left( -\frac{(p-1)q_{0} \alpha_{\omega}^{-(\theta-1/2)}}{4}\right)
\\[6pt]
&\le C(q_{0}, \theta)  \beta_{\omega}^{q_{0}}
\end{split}
\end{equation}
where $C(q_{0}, \theta)$ denotes a positive constant depending only on 
$d$, $p$, $q_{0}$ and $\theta$. Thus, it follows from \eqref{eq:3.33} 
and  $\lim_{\omega\to \infty}\beta_{\omega}=0$ that 
\begin{equation}\label{eq:3.34}
\lim_{\omega \to \infty} \sup_{\TPho \in \TGo}  J_{\omega,in}=0.
\end{equation}
By \eqref{eq:3.31} and \eqref{eq:3.34}, we have \eqref{eq:3.28} and complete the proof. 
\end{proof}

%%%%%%%%%%%%%%%%%%%%%%%%%%%%%%%%%%%%%%%%%%%%%%%%%%%%%%%%%%%%%%%%%%%%%%%%%%%%%%%%
\section{Proof of Theorem \ref{theorem:1.1}}\label{section:4}

In this section, we shall give a proof of Theorem \ref{theorem:1.1}. 
In the sequel, we assume $d \geq 5$, hence, 
by Proposition \ref{proposition:1.1}, we have $\Go \neq \emptyset$ 
and the results of Sections \ref{section:2} and \ref{section:3} hold.

Our proof is based on the ideas in \cite{Grossi, GLP} and 
we first prove the uniqueness by contradiction. 
Therefore, we suppose to the contrary that there exists  a sequence $\{\omega_{n}\}$ in $(0,\infty)$ such that 
$\lim_{n\to \infty}\omega_{n}=\infty$ and 
for each $n\ge 1$, $\Phi_{n,1}, \Phi_{n,2}\in \mathcal{G}_{\omega_{n}}$ and $\Phi_{n,1}\neq \Phi_{n,2}$. For $j=1,2$, we set 

\begin{equation}\label{eq:4.1}
\begin{aligned}
M_{n,j}&:=\max_{\mathbb{R}^{d}}\Phi_{n,j}=\Phi_{n,j}(0), 
\\[6pt] 
\widetilde{\Phi}_{n,j}(x)&:=M_{n,j}^{-1}\Phi_{n,j}(M_{n,j}^{-\frac{2}{d-2}}x),
\\[6pt]
\alpha_{n,j} &:= \omega_n M_{n,j}^{-\frac{4}{d-2}},  
\\[6pt] 
			\beta_{n,j} &:= M_{n,j}^{p-1-\frac{4}{d-2}},  
			\end{aligned}
\end{equation}
and
\begin{equation}%\\[6pt] 
			\mu_{n}:=\frac{M_{n,2}}{M_{n,1}}.
\end{equation}
We shall derive a contradiction. Let us begin with the following lemma: 
\begin{lemma}\label{proposition:4.1}
Assume $d\ge 5$ and $1<p<\frac{d+2}{d-2}$. Then, for $j=1,2$, 
\begin{equation}\label{eq:4.2}
\lim_{n \to \infty} \frac{\beta_{n,j}}{\alpha_{n,j}}
= 
\frac{2(p+1)}{4-(d-2)(p-1)}
\frac{\| W\|^2_{L^{2}}}{\|W\|_{L^{p+1}}^{p+1}}
\end{equation}
where $\alpha_{n,j}, \beta_{n,j}$ are given in \eqref{eq:4.1}, and $W$ in \eqref{eq:1.10}. 
Furthermore, it holds that  
\begin{equation}\label{eq:4.3}
\lim_{n\to \infty} \mu_{n} = 1 .
\end{equation}
\end{lemma}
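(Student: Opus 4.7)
\textbf{Proof plan for Lemma \ref{proposition:4.1}.}
The strategy is to read off both claims from the Pohozaev--Nehari combination \eqref{eq:2.11} applied to the rescaled ground states $\widetilde{\Phi}_{n,j}$, and then use the strong convergence of $\widetilde{\Phi}_{n,j}$ to the Talenti function $W$ in the appropriate Lebesgue norms.

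First, since each $\widetilde{\Phi}_{n,j}$ solves \eqref{eq:1.9} with $(\alpha_\omega,\beta_\omega) = (\alpha_{n,j},\beta_{n,j})$ and is (the rescaling of) a ground state, identity \eqref{eq:2.11} yields
\[
\frac{\alpha_{n,j}}{d}\,\|\widetilde{\Phi}_{n,j}\|_{L^2}^2
= \frac{2^\ast-(p+1)}{2^\ast(p+1)}\,\beta_{n,j}\,\|\widetilde{\Phi}_{n,j}\|_{L^{p+1}}^{p+1}.
\]
A short algebraic simplification of the prefactor using $2^\ast = \frac{2d}{d-2}$ gives
\[
\frac{\beta_{n,j}}{\alpha_{n,j}}
= \frac{2(p+1)}{4-(d-2)(p-1)}\cdot\frac{\|\widetilde{\Phi}_{n,j}\|_{L^2}^2}{\|\widetilde{\Phi}_{n,j}\|_{L^{p+1}}^{p+1}}.
\]

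Next, the dimensional hypothesis $d\ge 5$ enters in two ways: it guarantees both $2>\frac{d}{d-2}$ and $p+1>\frac{d}{d-2}$ (the latter since $p>1$), and it ensures $W\in L^2(\mathbb{R}^d)$ (because $W(x)\sim|x|^{-(d-2)}$ is $L^2$ exactly when $2(d-2)>d$). Consequently, Corollary \ref{theorem:3.1} applies with $q=2$ and $q=p+1$, so
\[
\lim_{n\to\infty}\|\widetilde{\Phi}_{n,j}\|_{L^2}^2 = \|W\|_{L^2}^2,\qquad
\lim_{n\to\infty}\|\widetilde{\Phi}_{n,j}\|_{L^{p+1}}^{p+1} = \|W\|_{L^{p+1}}^{p+1}>0.
\]
Passing to the limit in the displayed formula for $\beta_{n,j}/\alpha_{n,j}$ yields \eqref{eq:4.2}.

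Finally, the identity \eqref{eq:4.3} is a direct consequence. From the definitions in \eqref{eq:4.1} one checks
\[
\frac{\beta_{n,j}}{\alpha_{n,j}} = \frac{M_{n,j}^{p-1}}{\omega_n},
\qquad\text{hence}\qquad
\frac{\beta_{n,2}/\alpha_{n,2}}{\beta_{n,1}/\alpha_{n,1}} = \mu_n^{p-1}.
\]
Since the numerator and denominator share the same positive limit given by \eqref{eq:4.2}, we conclude $\mu_n^{p-1}\to 1$, and $\mu_n>0$ together with $p>1$ forces $\mu_n\to 1$.

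The only non-routine point is ensuring the validity of the $L^2$ (and $L^{p+1}$) convergence of $\widetilde{\Phi}_{n,j}$ to $W$; this is precisely what Corollary \ref{theorem:3.1} delivers and is the step where the restriction $d\ge 5$ is essential, matching the hypothesis of Theorem \ref{theorem:1.1}. Everything else is algebraic rearrangement.
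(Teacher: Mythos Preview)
Your proof is correct and follows exactly the approach indicated in the paper, which simply cites identity \eqref{eq:2.11} together with Corollary \ref{theorem:3.1} for \eqref{eq:4.2}, and calls \eqref{eq:4.3} an elementary consequence of \eqref{eq:4.2}. You have filled in the algebra the paper omits, including the verification that $2$ and $p+1$ exceed $\tfrac{d}{d-2}$ when $d\ge 5$ (so Corollary \ref{theorem:3.1} applies) and the observation that $\beta_{n,j}/\alpha_{n,j}=M_{n,j}^{p-1}/\omega_n$, whence the ratio of these quantities for $j=1,2$ equals $\mu_n^{p-1}$.
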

\begin{proof}
The claim \eqref{eq:4.2} follows from identity \eqref{eq:2.11} and Corollary \ref{theorem:3.1}. Furthermore, an elementary computation together with \eqref{eq:4.2} shows \eqref{eq:4.3}.
\end{proof}
In what follows, owing to Lemma \ref{proposition:4.1}, we may assume the that for all $n$ and $j=1,2$: 
\begin{align}
\label{eq:4.4}
&\beta_{n,j} \lesssim \alpha_{n,j}, 
\\[6pt]
\label{eq:4.5}
&\frac{1}{2}\le \mu_{n} \le \frac{3}{2}. 
\end{align}
Moreover, Proposition \ref{proposition:2.1}, Corollary \ref{theorem:3.1} and Lemma \ref{proposition:4.1} imply that 
\begin{equation}\label{eq:4.6}
\lim_{n\to \infty}
\| \TPhnone -W \|_{H^{1}}
=
\lim_{n\to \infty}
\| \mu_{n} \TPhntwo( \mu_{n}^{\frac{2}{d-2}}\cdot )
-W
\|_{H^{1}}
=0
.
\end{equation}
Next, we define  
\begin{align}
\label{eq:4.7}
\Psi_{n}(x)
&:=
\Phi_{n,1}(M_{n,1}^{-\frac{2}{d-2}}x)-\Phi_{n,2}(M_{n,1}^{-\frac{2}{d-2}}x) 
			= M_{n,1} \Big\{  \TPhnone(x) - \mu_n \TPhntwo (\mu_{n}^{\frac{2}{d-2}} x)  \Big\}, 
\\[6pt]
\label{eq:4.8}
\wt{z}_{n}(x) &:=\frac{\Psi_{n}(x)}{\| \Psi_{n} \|_{L^{\infty}}}. 
\end{align}
Since $\widetilde{\Phi}_{n,1}$ and $\mu_n \TPhntwo (\mu_{n}^{\frac{2}{d-2}} x)$ are solutions to the same equation  
	\[
		-\Delta u + \alpha_{n,1} u = \beta_{n,1} u^p + u^{(d+2)/(d-2)},
	\]
we can verify that 
\begin{equation}\label{eq:4.9}
-\Delta \wt{z}_n = - \alpha_{n,1} \wt{z}_n + p \beta_{n,1} \int_0^1 V_n^{p-1} (x,\theta) d \theta \wt{z}_n 
+ \frac{d+2}{d-2} \int_0^1 V_n^{\frac{4}{d-2}} (x,\theta) d \theta \wt{z}_n 
\end{equation}
where 
\begin{equation}\label{eq:4.10}
V_{n}(x,\theta):=\theta \widetilde{\Phi}_{n,1}(x)+(1-\theta)\mu_{n} \widetilde{\Phi}_{n,2}(\mu_{n}^{\frac{2}{d-2}}x). 
\end{equation}
We first show that $\{\wt{z}_n\}$ is bounded in $\dHRd$:
\begin{lemma}\label{proposition:4.2}
	Assume $d\ge 5$ and $1 < p < \frac{d+2}{d-2}$. 
	Then, $\{\wt{z}_n\}$ is bounded in $\dHRd$. 
\end{lemma}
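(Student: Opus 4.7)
The plan is to test equation \eqref{eq:4.9} against $\wt z_n$ itself. Since $\wt z_n\in H^1(\Rd)$ (as the difference of two $H^1$-ground states divided by a positive constant), this is legitimate, and writing $A_n(x):=\int_0^1 V_n(x,\theta)^{p-1}\,d\theta$ and $B_n(x):=\int_0^1 V_n(x,\theta)^{4/(d-2)}\,d\theta$, it would produce
\begin{equation*}
\|\nabla \wt z_n\|_{L^2}^2 + \alpha_{n,1}\|\wt z_n\|_{L^2}^2
= p\beta_{n,1}\int A_n \wt z_n^2\,dx + \frac{d+2}{d-2}\int B_n \wt z_n^2\,dx.
\end{equation*}
The goal is to dominate the right-hand side by $C + \tfrac12\|\nabla \wt z_n\|_{L^2}^2 + o(1)$, from which the claimed $\dHRd$-bound follows by absorption.

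For the critical term I would first show $V_n(\cdot,\theta)\to W$ in $L^{2^*}(\Rd)$ uniformly in $\theta\in[0,1]$, by combining Proposition \ref{proposition:2.1}, Corollary \ref{theorem:3.1} and Lemma \ref{proposition:4.1} ($\mu_n\to 1$) with the $L^{2^*}$-invariance of the rescaling $v\mapsto \mu v(\mu^{2/(d-2)}\cdot)$. The elementary inequality $\|u^{\gamma}-v^{\gamma}\|_{L^{d/2}}\lesssim\|u-v\|_{L^{2^*}}^{\gamma}$, valid with $\gamma:=4/(d-2)$ because $\gamma\cdot(d/2)=2^*$, would then yield $B_n\to W^{4/(d-2)}$ in $L^{d/2}(\Rd)$. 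Given $\delta>0$, pick $R>0$ and $N$ so that $\|B_n\|_{L^{d/2}(\{|x|>R\})}<\delta$ for $n\geq N$, and split
\begin{equation*}
\int B_n \wt z_n^2\,dx
\leq \|\wt z_n\|_{L^\infty}^2 |B_R|^{1-2/d}\|B_n\|_{L^{d/2}(B_R)}
+ \|B_n\|_{L^{d/2}(\{|x|>R\})}\|\wt z_n\|_{L^{2^*}}^2
\leq C(R) + \delta S_d \|\nabla \wt z_n\|_{L^2}^2,
\end{equation*}
using $\|\wt z_n\|_{L^\infty}\leq 1$ with H\"older inside $B_R$ and H\"older together with the Sobolev inequality (constant $S_d$) outside. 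Choosing $\delta$ so that $\tfrac{d+2}{d-2}\delta S_d\leq \tfrac12$ lets the critical contribution be absorbed into the left-hand side. The subcritical term is handled along the same lines: from the uniform pointwise bound $A_n\lesssim(1+|x|)^{-(d-2)(p-1)}$ of Proposition \ref{proposition:3.1}, pick an exponent $r$ for which $\|A_n\|_{L^r}$ is under control (directly when $p>d/(d-2)$, or by combining the polynomial tail with the exponential decay of Lemma \ref{proposition:3.2} when $p\leq d/(d-2)$), pair with $\|\wt z_n\|_{L^{2r'}}^2$ estimated via $\|\wt z_n\|_{L^\infty}\leq 1$ and Sobolev, and exploit the vanishing factor $\beta_{n,1}\to 0$ supplied by Lemma \ref{proposition:2.3}.

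The main obstacle is the criticality of $B_n$: since $B_n\to W^{4/(d-2)}$ in $L^{d/2}$ with a non-vanishing limit, the critical term cannot be made globally small by Sobolev's inequality alone. The remedy is the inside/outside splitting: the uniform normalization $\|\wt z_n\|_{L^\infty}\leq 1$ controls the contribution on a fixed ball where most of the $L^{d/2}$-mass of $B_n$ sits, while the uniform polynomial decay of Proposition \ref{proposition:3.1} guarantees that the Sobolev term on the complement carries an arbitrarily small $L^{d/2}$-coefficient, which can be absorbed into the gradient term on the left.
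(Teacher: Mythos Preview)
Your proof is correct and takes a genuinely different route from the paper's. Both arguments test \eqref{eq:4.9} against $\wt z_n$, but the paper handles the two right-hand terms in a single stroke: it first uses Young's inequality together with $\beta_{n,1}\lesssim\alpha_{n,1}$ to fold the subcritical term into the $\alpha_{n,1}\|\wt z_n\|_{L^2}^2$-term on the left plus a copy of the critical term; then for the critical term it exploits $\|\wt z_n\|_{L^\infty}=1$ globally to replace $|\wt z_n|^2$ by $|\wt z_n|^{2-\varepsilon_0}$ with $\varepsilon_0=\tfrac{4}{3(d-2)}$, and pairs $V_n^{4/(d-2)}$ (which, by Proposition~\ref{proposition:3.1}, lies uniformly in $L^{3d/8}$) with $|\wt z_n|^{2-\varepsilon_0}$ via H\"older and Sobolev to get the sub-quadratic estimate $\|\nabla\wt z_n\|_{L^2}^2\lesssim\|\nabla\wt z_n\|_{L^2}^{2-\varepsilon_0}$, from which boundedness is immediate without any splitting or absorption constant. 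Your inside/outside decomposition is the more standard device: it trades the clever exponent $\varepsilon_0$ for the qualitative convergence $B_n\to W^{4/(d-2)}$ in $L^{d/2}$ and uses the $L^\infty$-bound only on the fixed ball. One small caveat: the inequality $\|u^\gamma-v^\gamma\|_{L^{d/2}}\lesssim\|u-v\|_{L^{2^*}}^\gamma$ as you state it holds only for $\gamma\le1$, i.e.\ $d\ge6$; for $d=5$ one has $\gamma=4/3$ and needs the form $\lesssim(\|u\|_{L^{2^*}}+\|v\|_{L^{2^*}})^{\gamma-1}\|u-v\|_{L^{2^*}}$, which still gives the $L^{d/2}$-convergence you want. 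For the subcritical term, your appeal to Lemma~\ref{proposition:3.2} in the range $p\le d/(d-2)$ amounts to redoing the computation of Lemma~\ref{proposition:3.4} (which, undoing the Kelvin transform, says exactly $\beta_{n,1}\|A_n\|_{L^{d/2}}\to0$); the paper avoids this by absorbing the subcritical piece into the critical one. The paper's argument is slicker and uniform in $p$; yours is more modular and makes the role of the decay estimates more explicit.
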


	\begin{proof}
Since $\wt{z}_n \in H^1(\Rd)$, using $\wt{z}_n$ as  test function to \eqref{eq:4.9}, we have 
	\begin{equation}\label{eq:4.11}
\begin{split}
\alpha_{n,1} \| \wt{z}_{n} \|_{L^{2}}^{2}
+
\| \nabla \wt{z}_{n} \|_{L^{2}}^{2} 
&\lesssim 
\beta_{n,1} \int_{\Rd} \int_{0}^1 V_n^{p-1}(x,\theta) d \theta \, |\wt{z}_{n}|^{2} \, dx 
\\[6pt]
& \quad + \int_{\Rd} \int_{0}^{1} V_{n}^{\frac{4}{d-2}} (x, \theta)
\, d \theta \, |\wt{z}_{n} |^{2} \, dx. 
\end{split}
\end{equation}
We see from Young's  inequality, \eqref{eq:4.4} and \eqref{eq:3.14} that for any $\delta >0$, 
\begin{equation}\label{eq:4.12}
\begin{split}
&\beta_{n,1}
\int_{\Rd} \int_{0}^1 V_n^{p-1}(x,\theta) d \theta \, |\wt{z}_{n}|^{2} 
\,dx 
\\[6pt]
\lesssim  \, &
\beta_{n,1}
\delta^{\frac{4(p-1)}{\gamma}} 
\int_{\Rd} |\wt{z}_{n}|^{2} 
\, dx
+
\delta^{-\frac{4}{d-2}} \beta_{n,1} 
\int_{\Rd} \int_{0}^{1} V_{n}^{\frac{4}{d-2}} (x,\theta) \, d \theta 
\, |\wt{z}_{n}|^{2} 
\, dx
\\[6pt]
\lesssim \, &
\alpha_{n,1}
\delta^{\frac{4(p-1)}{\gamma}}  \| \wt{z}_{n} \|_{L^{2}}^{2}
+
\delta^{-\frac{4}{d-2}} \beta_{n,1} 
\int_{\Rd} \int_{0}^{1} V_{n}^{\frac{4}{d-2}} (x,\theta) \, d \theta 
\, |\wt{z}_{n}|^{2} 
\, dx,
\end{split}
\end{equation}
where the implicit constants are independent of $\delta$.

	Next, set $\e_{0} :=\frac{4}{3(d-2)}$ and $p_{0} := \frac{2^\ast}{2-\e_0}$. Note that 
	\begin{equation*}
 1-\frac{1}{p_{0}} = \frac{4+(d-2)\e_0 }{2d} = \frac{8}{3d} .
	\end{equation*} 
Since $\|\wt{z}_n \|_{L^\infty} = 1$ holds by definition, it follows from H\"older's inequality, 
 Proposition \ref{proposition:3.1} and Sobolev's inequality that 
for all $n \geq 1$, 
	\begin{equation}\label{eq:4.13}
		\begin{aligned}
\int_{\Rd} \int_0^{1} V_{n}^{\frac{4}{d-2}} (x, \theta) d \theta |\wt{z}_n|^{2} dx 
&\le 
\int_{\Rd} \int_0^{1} V_{n}^{\frac{4}{d-2}} (x, \theta) d \theta |\wt{z}_n |^{2-\e_0} d x
			\\[6pt]
			 &\leq 
\left\| \int_0^{1} V_{n}^{\frac{4}{d-2}} (x, \theta) d \theta 
\right\|_{L^{\frac{2d}{4+(d-2)\varepsilon_{0}}}} 
\| |\wt{z}_{n}|^{2-\e_0} \|_{L^{p_{0}}}
			\\[6pt]
			&\lesssim  
\left\| (1+|x|)^{-4} \right\|_{L^{\frac{2d}{4+(d-2)\varepsilon_{0}}}}
\left\|\wt{z}_{n} \right\|_{L^{2^{*}}}^{2-\varepsilon_{0}}
\lesssim \| \nabla  \wt{z}_n \|_{L^2}^{2-\e_0}.
		\end{aligned}
	\end{equation}

Choosing $\delta$ sufficiently small depending on $d$ and $p$, 
we find from \eqref{eq:4.11}, \eqref{eq:4.12}, \eqref{eq:4.13} and $\lim_{n\to \infty}\beta_{n,1}=0$ that 
\begin{equation*}
\| \nabla \wt{z}_{n} \|_{L^{2}}^{2} 
\le  
C  \| \nabla  \wt{z}_n \|_{L^2}^{2-\e_0}
\end{equation*}
where $C>0$ depends only on $d$ and $p$. 
Hence, $\{ \wt{z}_{n} \}$ is bounded in $\dHRd$. 
	\end{proof}

Next, we derive a uniform decay estimate for $\{\wt{z}_n\}$. 

\begin{lemma}\label{proposition:4.3}
Assume $d\ge 5$ and $1<p<\frac{d+2}{d-2}$. Then, there exists $C_{1}>0$ such that 
for any $x \in \Rd$ and $n \geq 1$, 
\begin{equation}\label{eq:4.14}
|\wt{z}_{n}(x)| \leq C_{1} |x|^{-(d-2)}
.
\end{equation}
\end{lemma}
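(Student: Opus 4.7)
The plan is to mimic the strategy of Proposition \ref{proposition:3.1}: apply Moser's iteration (Proposition \ref{proposition:B.1}) to the Kelvin transform $K[\wt{z}_n]$ of $\wt{z}_n$ on a small ball around the origin, and deduce the pointwise decay of $\wt{z}_n$ from a uniform $L^\infty(B_1)$-bound on $K[\wt{z}_n]$. Since $\|\wt{z}_n\|_{L^\infty}=1$, the inequality \eqref{eq:4.14} is automatic on $B_1$ once $C_1\geq 1$, while for $|x|\geq 1$ the identity $\wt{z}_n(x)=|x|^{-(d-2)}K[\wt{z}_n](x/|x|^2)$ reduces the estimate precisely to
\begin{equation*}
\sup_{n\geq 1}\|K[\wt{z}_n]\|_{L^\infty(B_1)}\lesssim 1.
\end{equation*}

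First I would derive the PDE satisfied by $K[\wt{z}_n]$ by applying the Kelvin conformal identity to \eqref{eq:4.9} together with the relation $V_n(x/|x|^2)=|x|^{d-2}K[V_n](x)$:
\begin{equation*}
-\Delta K[\wt{z}_n]+\frac{\alpha_{n,1}}{|x|^4}K[\wt{z}_n]=\frac{p\beta_{n,1}}{|x|^\gamma}\int_0^1 K[V_n]^{p-1}\,d\theta\,K[\wt{z}_n]+\frac{d+2}{d-2}\int_0^1 K[V_n]^{\frac{4}{d-2}}\,d\theta\,K[\wt{z}_n],
\end{equation*}
with $\gamma$ as in \eqref{eq:3.14}. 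Proposition \ref{proposition:3.1} gives $\TPhnone(y)\lesssim(1+|y|)^{-(d-2)}$ and, using \eqref{eq:4.5}, the same bound (up to a constant) for $\mu_n\TPhntwo(\mu_n^{2/(d-2)}\cdot)$; hence $V_n(y)\lesssim(1+|y|)^{-(d-2)}$ uniformly in $n$ and $\theta$, so $K[V_n]$ is uniformly bounded on $\Rd$. In particular $K[V_n]^{4/(d-2)}\in L^\infty(B_4)$ uniformly. The Kelvin transform being an isometry of $\dHRd$ and Lemma \ref{proposition:4.2} yielding the uniform $\dH$-bound of $\{\wt{z}_n\}$, one obtains the uniform $\dH$-boundedness of $\{K[\wt{z}_n]\}$. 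Finally, Lemma \ref{proposition:3.2} applied to both $\TPhnone$ and $\mu_n\TPhntwo(\mu_n^{2/(d-2)}\cdot)$ produces exponential decay of $V_n$ at comparable rates (by Lemma \ref{proposition:4.1}), giving super-fast decay of $K[V_n]$ and $K[\wt{z}_n]$ near the origin (cf.\ \eqref{eq:3.15}); this secures the integrability analogue of \eqref{eq:3.22} for the singular term $\alpha_{n,1}|x|^{-4}K[\wt{z}_n]$.

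The main obstacle is the $L^{q_0}(B_4)$-bound, for some $q_0>d/2$, of the subcritical coefficient $\beta_{n,1}|x|^{-\gamma}K[V_n]^{p-1}$. Since $K[V_n]$ is uniformly bounded, this majorizes a constant multiple of $\beta_{n,1}|x|^{-\gamma}$, and the same case distinction as in the proof of Proposition \ref{proposition:3.1} arises. In Case 1 (i.e.\ $\gamma<2$, equivalently $d/(d-2)<p$), any $q_0\in(d/2,d/\gamma)$ works because $|x|^{-\gamma}\in L^{q_0}(B_4)$. In Case 2 (i.e.\ $2\leq\gamma<4$, equivalently $1<p\leq d/(d-2)$), the singularity $|x|^{-\gamma}$ is not $L^{q_0}$-integrable for any $q_0>d/2$, and one must invoke the super-fast decay of $K[V_n]$ near the origin coming from the Lemma \ref{proposition:3.2} exponential decay of $V_n$ at infinity. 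Splitting $B_4=\{|x|\leq\sqrt{\alpha_{n,1}}/L_0\}\cup\{\sqrt{\alpha_{n,1}}/L_0<|x|\leq 4\}$ and mimicking the inner/outer estimates of Case 2 of the proof of Proposition \ref{proposition:3.1} (with appropriate parameters $s_1,\theta$) yields the required bound. Transferring the exponential decay through the affine combination defining $V_n$ and through the Kelvin transform is the delicate step.

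With the hypotheses verified, Proposition \ref{proposition:B.1}(i) delivers uniform $L^q(B_2)$-bounds for $K[\wt{z}_n]$ for every $q>1$. Each $\wt{z}_n$ is radial by Proposition \ref{proposition:1.2}, so \cite[Lemma A.III]{Berestycki-Lions} applies exactly as in the derivation of \eqref{eq:3.27}, giving $K[\wt{z}_n](x)\lesssim C_q|x|^{-(d-2)/(2q)}$ for $|x|\leq 1/2$. Applying then Proposition \ref{proposition:B.1}(ii) yields the sought $L^\infty(B_1)$-bound, from which \eqref{eq:4.14} follows.
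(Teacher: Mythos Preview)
Your approach is correct and follows the same overall strategy as the paper (Kelvin transform plus Moser iteration via Proposition \ref{proposition:B.1}), but you take a substantially longer route than necessary and include a redundant step.

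The paper's proof is much shorter because of the pointwise inequality \eqref{eq:4.17}: by bounding $V_n^r(x/|x|^2,\theta)$ above by a constant multiple of $\wt{\Phi}_{n,1}^r(x/|x|^2)+\mu_n^r\wt{\Phi}_{n,2}^r(\mu_n^{2/(d-2)}x/|x|^2)$, the subcritical coefficient $\beta_{n,1}|x|^{-\gamma}K[V_n]^{p-1}$ is controlled directly by quantities of the form $\beta_{n,j}|x|^{-\gamma}K[\wt{\Phi}_{n,j}]^{p-1}$, for which the $L^{q_0}(B_4)$-bound with $q_0>d/2$ is \emph{already} established in \eqref{eq:3.28} during the proof of Proposition \ref{proposition:3.1}. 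Similarly, the critical coefficient is bounded by $K[\wt{\Phi}_{n,j}]^{4/(d-2)}$, which is uniformly in $L^\infty$ by Proposition \ref{proposition:3.1}. With \eqref{eq:4.18} and \eqref{eq:4.19} in hand, Proposition \ref{proposition:B.1}(ii) applies immediately---no need for part (i), the radial Strauss-type bound \eqref{eq:3.27}, or any re-run of the Case 1/Case 2 computation. Your proposal instead re-derives the $L^{q_0}$ bound from scratch for $V_n$, which does work (the exponential rates even coincide exactly, since $\sqrt{\alpha_{n,2}}\,\mu_n^{2/(d-2)}=\sqrt{\alpha_{n,1}}$), but duplicates effort already invested in Section \ref{section:3}.

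Two minor corrections: the radiality of $\wt{z}_n$ does not come from Proposition \ref{proposition:1.2} (which concerns ground states) but simply from $\wt{z}_n$ being a normalized difference of radial functions; and the split point for your Case 2 should be $\alpha_{n,1}^\theta$ with $\theta>1/2$ as in the $J$-integrals, not $\sqrt{\alpha_{n,1}}/L_0$ as you wrote (that was the split for the $I$-integrals in Lemma \ref{proposition:3.4}). Most importantly, once you have verified the $L^{q_0}$ bound on $b$ for some $q_0>d/2$, you can invoke Proposition \ref{proposition:B.1}(ii) directly; the intermediate application of (i) and the \cite[Lemma A.III]{Berestycki-Lions} step are superfluous here.
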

\begin{proof}
We see from \eqref{eq:3.16} and Lemma \ref{proposition:4.2} that $\{K[\wt{z}_{n}]  \}$ is bounded in $\dHRd$. 
Furthermore, it follows from \eqref{eq:4.9} that 
\begin{equation}\label{eq:4.15} 
\begin{split}
&-\Delta K[\wt{z}_{n}] 
+ \frac{\alpha_{n,1}}{|x|^{4}}  K[\wt{z}_{n}] 
	\\[6pt]	= \, & \frac{1}{|x|^{4}} 
		\left[ p \beta_{n,1}  \int_0^1 V_{n}^{p-1} \left(\frac{x}{|x|^{2}},\theta \right) d \theta + 
		\frac{d+2}{d-2} \int_0^1 V_{n}^{\frac{4}{d-2}}  \left(\frac{x}{|x|^{2}},\theta\right) d \theta  \right] K[\wt{z}_{n}].
\end{split} 
\end{equation}

To prove \eqref{eq:4.14}, we shall apply Proposition \ref{proposition:B.1} (ii). 
We first remark that by Lemma \ref{proposition:3.2}, 
$\Psi_n$ decays exponentially and so does $\wt{z}_n$. 
Thus, for any $n\ge 1$ and any $v \in H_{0}^{1}(B_{4})$, we have 
\begin{equation}\label{eq:4.16}
\int_{B_{4}} \frac{\alpha_{n,1}}{|x|^{4}}
|K[\wt{z}_{n}](x)||v(x)|\,dx <\infty
.
\end{equation}

	Next, it follows from \eqref{eq:4.10} that for each $r > 0$, 
$x \in \Rd \setminus \{0\}$ and $ \theta \in [0,1]$, 
	\begin{equation}\label{eq:4.17}
		V_n^r \left( \frac{x}{|x|^2} , \theta \right) \leq 
		C_r \left\{\widetilde{\Phi}_{n,1}^r \left( \frac{x}{|x|^2} \right)
		+ \mu_{n}^r \widetilde{\Phi}_{n,2}^r \left( \mu_{n}^{\frac{2}{d-2}} \frac{x}{|x|^2} \right)
		\right\}
	\end{equation}
where $C_r$ depends only on $r$. 
When $r = p - 1$, by $\gamma = 4 - (d-2)(p-1)$, we get 
	\[
		\begin{aligned}
			\frac{\beta_{n,1}}{|x|^4} 
			\int_0^1 V_n^{p-1} \left( \frac{x}{|x|^2} , \theta \right) d \theta 
			& \leq C_p \frac{\beta_{n,1}}{|x|^4} 
			\left\{ \wt{\Phi}_{n,1}^{p-1} \left( \frac{x}{|x|^2}  \right)
			+ \mu^{p-1} \wt{\Phi}_{n,2}^{p-1} 
			\left(  \mu_n^{\frac{2}{d-2}} \frac{x}{|x|^2} \right) \right\}
			\\
			& \leq C_p \frac{\beta_{n,1}}{|x|^{\gamma}}
			\left\{ K[ \wt{\Phi}_{n,1} ]^{p-1} (x) + 
			\mu_n^{-(p-1)} K [ \wt{\Phi}_{n,2} ]^{p-1} \left( \mu_n^{-\frac{2}{d-2}} x \right)
			  \right\}.
		\end{aligned}
	\]
Hence, recalling \eqref{eq:4.5}, \eqref{eq:3.28} in the proof of Proposition \ref{proposition:3.1}, we see that for some $q_0 > \frac{d}{2}$, 
	\begin{equation}\label{eq:4.18}
		\sup_{n \geq 1}\left\| \frac{\beta_{n,1}}{|x|^4} 
		\int_0^1 V_n^{p-1} \left( \frac{x}{|x|^2} , \theta \right) d \theta  \right\|_{L^{q_0}(B_4)} 
		< \infty.
	\end{equation}

	On the other hand, when $r = \frac{4}{d-2}$, \eqref{eq:4.17} gives us that 
	\[
		\frac{1}{|x|^4}V_n^{\frac{4}{d-2}} \left( \frac{x}{|x|^2} , \theta \right)
		 \leq 
		C_d \left\{ K[ \wt{\Phi}_{n,1} ]^{\frac{4}{d-2}} (x)
		+ K[ \wt{\Phi}_{n,2} ]^{\frac{4}{d-2}} \left( \mu_n^{\frac{2}{d-2}} x \right) \right\}.
	\]
By Proposition \ref{proposition:3.1}, $\{ K[\wt{\Phi}_{n,j}] \}$ is bounded in $L^\infty(\Rd)$ 
for $j =1,2$. Hence, 
	\begin{equation}\label{eq:4.19}
		\sup_{n \geq 1} 
		\left\| \frac{1}{|x|^4} \int_0^1V_n^{\frac{4}{d-2}} \left( \frac{x}{|x|^2} , \theta \right) 
		d \theta 
		 \right\|_{L^\infty(B_4)} < \infty. 
	\end{equation}

	From \eqref{eq:4.15}, \eqref{eq:4.16}, \eqref{eq:4.18} and \eqref{eq:4.19}, 
we apply Proposition \ref{proposition:B.1} (ii) to obtain 
	\[
		\sup_{n\geq 1} \left\| K[\tilde{z}_n] \right\|_{L^\infty(B_1)} < \infty.
	\]
Therefore, Lemma \ref{proposition:4.3} holds. 
\end{proof}

As a corollary of Lemma \ref{proposition:4.3}, 
we obtain

\begin{lemma}\label{proposition:4.4}
Assume $d\ge 5$ and $1<p<\frac{d+2}{d-2}$. Then 
\begin{equation*}
\sup_{n\ge 1}\|\wt{z}_{n}\|_{L^{2}}<\infty. 
\end{equation*}
\end{lemma}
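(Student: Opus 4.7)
The plan is to combine the pointwise decay from Lemma \ref{proposition:4.3} with the trivial bound $\|\wt{z}_n\|_{L^\infty} = 1$ coming from the definition \eqref{eq:4.8}, and then split the $L^2$-integral into a bounded region and its exterior.

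More concretely, I would fix any convenient radius (say $R=1$) and write
\[
\|\wt{z}_n\|_{L^2}^2 = \int_{|x|\le 1} |\wt{z}_n|^2\, dx + \int_{|x|>1} |\wt{z}_n|^2\, dx.
\]
On the inner region I would use $\|\wt{z}_n\|_{L^\infty}=1$ to bound the first integral by $|B_1|$, uniformly in $n$. On the outer region I would use the pointwise estimate from Lemma \ref{proposition:4.3}, which gives $|\wt{z}_n(x)|^2 \le C_1^2 |x|^{-2(d-2)}$, and then reduce the integral to the one-dimensional integral $\int_1^\infty r^{d-1-2(d-2)}\,dr = \int_1^\infty r^{3-d}\,dr$ by passing to polar coordinates.

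The only thing to check is convergence of that last radial integral, which holds if and only if $3-d < -1$, that is, $d \geq 5$. This is precisely the standing hypothesis of the section, so the argument closes. I would therefore conclude that $\|\wt{z}_n\|_{L^2}^2 \le |B_1| + C_1^2 \omega_{d-1}/(d-4)$, uniformly in $n$.

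No serious obstacle is expected: the assumption $d \ge 5$ is exactly what makes the decay rate $|x|^{-(d-2)}$ square-integrable at infinity, and this is presumably the very reason the paper restricts to $d \ge 5$. The proof should fit in a few lines.
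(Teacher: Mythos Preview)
Your argument is correct and is exactly what the paper has in mind: it states Lemma~\ref{proposition:4.4} simply as a corollary of Lemma~\ref{proposition:4.3}, without giving further details. The split into $|x|\le 1$ (controlled by $\|\wt{z}_n\|_{L^\infty}=1$) and $|x|>1$ (controlled by the decay $|x|^{-(d-2)}$, square-integrable precisely when $d\ge 5$) is the intended reasoning.
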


Before proving Theorem \ref{theorem:1.1}, 
we use the following identity which is easily obtained from elementary calculations: 
\begin{lemma}\label{proposition:4.5}
Assume $d\ge 5$ and $1\le q \le \frac{d+2}{d-2}$. Then, the following holds:
	\begin{align*}
	\int_{\mathbb{R}^{d}} W^{q}(x) \Lambda W(x) \,dx 
	&=
	-\frac{4-(d-2)(q-1)}{2(q+1)}\|W\|_{L^{q+1}}^{q+1}
	\end{align*}
where $\Lambda$ is defined in \eqref{11111}.
\end{lemma}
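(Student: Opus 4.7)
The plan is to reduce the identity to a single integration by parts on $W^{q+1}$. Writing $\Lambda W=\tfrac{d-2}{2}W+x\cdot\nabla W$, I would split
\[
\int_{\mathbb{R}^d} W^q \Lambda W\,dx
= \frac{d-2}{2}\|W\|_{L^{q+1}}^{q+1}
+ \int_{\mathbb{R}^d} W^q (x\cdot\nabla W)\,dx
\]
and use the elementary identity $W^q(x\cdot\nabla W)=\tfrac{1}{q+1}x\cdot\nabla(W^{q+1})$. Then on a ball $B_R$, the divergence theorem gives
\[
\int_{B_R} x\cdot\nabla(W^{q+1})\,dx
= R\int_{\partial B_R} W^{q+1}\,dS - d\int_{B_R} W^{q+1}\,dx.
\]

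Next I would let $R\to\infty$. Since $W(x)=\bigl(1+|x|^2/[d(d-2)]\bigr)^{-(d-2)/2}\sim c|x|^{-(d-2)}$, the boundary term is of order $R^{1+(d-1)-(d-2)(q+1)}=R^{d-(d-2)(q+1)}$, which tends to $0$ exactly when $(d-2)(q+1)>d$. Under the hypotheses $d\ge5$ and $q\ge1$ this holds (the borderline case being $q=1$, which requires $d>4$), and the same condition guarantees $\|W\|_{L^{q+1}}^{q+1}<\infty$ so the right-hand side is well defined. Passing to the limit yields $\int_{\mathbb{R}^d} x\cdot\nabla(W^{q+1})\,dx=-d\|W\|_{L^{q+1}}^{q+1}$.

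Combining the two pieces,
\[
\int_{\mathbb{R}^d} W^q\Lambda W\,dx
=\left[\frac{d-2}{2}-\frac{d}{q+1}\right]\|W\|_{L^{q+1}}^{q+1}
=\frac{(d-2)(q-1)-4}{2(q+1)}\|W\|_{L^{q+1}}^{q+1},
\]
which is exactly $-\tfrac{4-(d-2)(q-1)}{2(q+1)}\|W\|_{L^{q+1}}^{q+1}$. The computation is essentially algebra; there is no real obstacle, only the mild integrability check that pins down the dimensional restriction $d\ge5$ (and is consistent with it being used throughout Section~4). An alternative and equivalent derivation would be to recognize $\Lambda W=\tfrac{d}{d\lambda}|_{\lambda=1}\lambda^{(d-2)/2}W(\lambda x)$ and differentiate $\int W(\lambda x)^{q+1}\lambda^{(d-2)(q+1)/2}\,dx=\lambda^{(d-2)(q+1)/2-d}\|W\|_{L^{q+1}}^{q+1}$ at $\lambda=1$, which gives the same constant without any boundary analysis; I would probably include this as a sanity check.
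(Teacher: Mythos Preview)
Your proof is correct and is precisely the kind of elementary computation the paper alludes to (the paper itself gives no details beyond stating the identity ``is easily obtained from elementary calculations''). The integration-by-parts argument and the boundary decay check are both sound, and your alternative scaling derivation via $\Lambda W=\frac{d}{d\lambda}\big|_{\lambda=1}\lambda^{(d-2)/2}W(\lambda x)$ is an equally valid shortcut.
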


	Now, we derive a contradiction and prove the uniqueness part:

\begin{proof}[Proof of uniqueness in Theorem \ref{theorem:1.1}]
By Lemmas \ref{proposition:4.2} and \ref{proposition:4.4}, 
$\{\wt{z}_n\}$ is bounded in $H^1(\Rd)$ and we may assume that 
\begin{equation}
\label{eq:4.20}
	\lim_{n\to \infty}\wt{z}_n = \wt{z}_\infty \quad 
	{\rm weakly \ in\ } H^1(\Rd). 
\end{equation}
Moreover, recalling that  $\wt{z}_n$ satisfies \eqref{eq:4.9}, 
by elliptic regularity with Corollary \ref{theorem:3.1},  \eqref{eq:4.3} and \eqref{eq:4.6}, we can see that 
	\begin{equation}\label{eq:4.21}
	\lim_{n\to \infty} \wt{z}_n = \wt{z}_\infty \quad {\rm in} \ C^2_{\rm loc}(\Rd),
	\end{equation}
hence, $\wt{z}_\infty$ is a solution to
\begin{equation}\label{eq:4.22}
-\Delta \wt{z}_{\infty}
-
\frac{d+2}{d-2} W^{\frac{4}{d-2}}
\wt{z}_{\infty}
=
0
.
\end{equation}
Thus, we find from \eqref{eq:4.22}, \eqref{eq:1.11} and the radial symmetry of $\wt{z}_{\infty}$ that 
either $\wt{z}_{\infty}\equiv 0$ or $\wt{z}_{\infty} = \kappa \Lambda W$ with $\kappa \neq 0$.

	First, we suppose that $\wt{z}_{\infty}\equiv 0$. Then, it follows from \eqref{eq:4.8} and \eqref{eq:4.21} that 
$1= \| \wt{z}_n \|_{L^\infty} \to 0$, which is a contradiction.

	Next, assume $\wt{z}_{\infty}=\kappa \Lambda W$ for some $\kappa \neq 0$. 
 Using \eqref{eq:2.4} and \eqref{eq:2.6}, we see that  
\begin{equation}\label{eq:4.23}
\begin{split}
&\frac{M_{n,1}}{\|\Psi_{n}\|_{L^{\infty}}}
\frac{\omega_{n}}{d}
\bigm(
\|\Phi_{n,1}\|_{L^{2}}^{2}-\|\Phi_{n,2}\|_{L^{2}}^{2}
\bigm)
\\[6pt]
= \, &
\frac{M_{n,1}}{\|\Psi_{n}\|_{L^{\infty}}}
\frac{4-(d-2)(p-1)}{2d(p+1)}
\bigm(
\|\Phi_{n,1}\|_{L^{p+1}}^{p+1}-\|\Phi_{n,2}\|_{L^{p+1}}^{p+1}
\bigm)
.
\end{split}
\end{equation}
For the left-hand side of \eqref{eq:4.23}, 
using the change of variables, we observe that 
\begin{equation}\label{eq:4.24}
\begin{aligned}
&\frac{M_{n,1}}{\|\Psi_{n}\|_{L^{\infty}}}
\frac{\omega_{n}}{d}
\bigm(
\|\Phi_{n,1}\|_{L^{2}}^{2}-\|\Phi_{n,2}\|_{L^{2}}^{2}
\bigm)
\\[6pt]
=&\,
M_{n,1}\frac{\omega_{n}}{d}
\int_{\mathbb{R}^{d}}
\frac{\Phi_{n,1}(x)-\Phi_{n,2}(x)}{\|\Psi_{n}\|_{L^{\infty}}}  
(\Phi_{n,1}(x)+\Phi_{n,2}(x))\,dx  
\\[6pt]
=&\,
\frac{\alpha_{n,1}}{d}
\int_{\mathbb{R}^{d}}
\widetilde{z}_{n}(x)
\Big[\wt{\Phi}_{n,1}(x)
+
\mu_{n} \wt{\Phi}_{n,2}(\mu_{n}^{\frac{2}{d-2}} x )\Big] \,dx 
.
\end{aligned}
\end{equation}
In a similar way, the right-hand side of \eqref{eq:4.23} becomes
\begin{equation}\label{eq:4.25}
\begin{aligned}
&
\frac{M_{n,1}}{\|\Psi_{n}\|_{L^{\infty}}}
\frac{4-(d-2)(p-1)}{2d(p+1)}
\bigm(
\|\Phi_{n,1}\|_{L^{p+1}}^{p+1}-\|\Phi_{n,2}\|_{L^{p+1}}^{p+1}
\bigm)
\\[6pt]
=&\,
M_{n,1}
\frac{4-(d-2)(p-1)}{2d}
\int_{\mathbb{R}^{d}} \int_{0}^{1}\Big[ \theta \Phi_{n,1}+(1-\theta)\Phi_{n,2}\Big]^{p}d\theta
\frac{\Phi_{n,1}-\Phi_{n,2}}{\|\Psi_{n}\|_{L^{\infty}}}
\,dx 
\\[6pt]
=&\,
\beta_{n,1} \frac{4-(d-2)(p-1)}{2d}
\int_{\mathbb{R}^{d}} \int_{0}^{1}\Big[ \theta \wt{\Phi}_{n,1}(x)
+
(1-\theta) \mu_{n} \wt{\Phi}_{n,2}(\mu_{n}^{\frac{2}{d-2}} x )  \Big]^{p}d\theta \widetilde{z}_{n}(x)\,dx 
.
\end{aligned}
\end{equation}
Hence, we obtain the following identity from \eqref{eq:4.23} through \eqref{eq:4.25} 
with \eqref{eq:4.10}:
\begin{equation}\label{eq:4.26}
	\begin{aligned}
		&\frac{1}{2}
		\int_{\mathbb{R}^{d}}
		\wt{z}_{n}(x)
		\Big[\wt{\Phi}_{n,1}(x)
		+
		\mu_{n} \wt{\Phi}_{n,2}(\mu_{n}^{\frac{2}{d-2}} x )\Big] \,dx 
		\\
		= & 
		\frac{\beta_{n,1}}{\alpha_{n,1}}
		\frac{4-(d-2)(p-1)}{4}
		\int_{\mathbb{R}^{d}} \int_{0}^{1} V_{n}(x,\theta)^{p}d\theta \wt{z}_{n}(x)\,dx .
	\end{aligned}
\end{equation}

	Since Corollary \ref{theorem:3.1} and Lemma \ref{proposition:4.1} yield 
	\[
		\lim_{n\to \infty} \TPhnone = W, \quad \lim_{n\to \infty} \mu_{n} \wt{\Phi}_{n,2}(\mu_{n}^{\frac{2}{d-2}} \cdot  ) 
		= W \quad {\rm strongly \ in} \ L^2(\Rd),
	\]
recalling \eqref{eq:4.20} with $\wt{z}_\infty = \kappa \Lambda W$, 
 we see from Lemma \ref{proposition:4.5} that 
\begin{equation}\label{eq:4.27}
\begin{split}
&
\lim_{n\to \infty}
\frac{1}{2}
\int_{\mathbb{R}^{d}}
\widetilde{z}_{n}(x)
\left[\wt{\Phi}_{n,1}(x)+\mu_{n} \wt{\Phi}_{n,2}( \mu_{n}^{\frac{2}{d-2}} x )\right] \,dx 
=
\int_{\mathbb{R}^{d}}\kappa \Lambda W(x) W(x) \,dx 
=-\kappa \|W\|_{L^{2}}^{2}.
\end{split}
\end{equation}
In a similar way, we can check that 

\[
	\lim_{n\to \infty} 	\int_{0}^{1} V_n^p (x,\theta) \,d\theta = W^{p} 
\quad  \mbox{strongly in $L^2(\Rd)$}
	\]
and 
\begin{equation}\label{eq:4.28}
\begin{split}
\lim_{n\to \infty}
\int_{\mathbb{R}^{d}} \int_{0}^{1}
V_n(x,\theta)^p 
d\theta \wt{z}_{n}(x)\,dx
&=
\int_{\mathbb{R}^{d}} 
W^{p}(x) \kappa \Lambda  W(x)\,dx 
\\[6pt]
&=
-\kappa 
\frac{4-(d-2)(p-1)}{2(p+1)}\|W\|_{L^{p+1}}^{p+1}
.
\end{split}
\end{equation}
Putting \eqref{eq:4.26}, \eqref{eq:4.27}, \eqref{eq:4.2}  and \eqref{eq:4.28} together, we find that 
\begin{equation*}
\begin{split}
-\kappa \|W\|_{L^{2}}^{2}
&=
\frac{2(p+1)}{4-(d-2)(p-1)}
\frac{\| W\|_{L^{2}}^2}{\|W\|_{L^{p+1}}^{p+1}}
\\[6pt]
&\qquad \times 
\frac{4-(d-2)(p-1)}{4} 
\left\{ -\kappa 
\frac{4-(d-2)(p-1)}{2(p+1)}\|W\|_{L^{p+1}}^{p+1} \right\}
\\[6pt]
&= -\kappa \frac{4 - (d-2)(p-1)}{4}  \|W\|_{L^{2}}^{2}
.
\end{split}
\end{equation*}
This contradicts $\kappa \neq 0$, and 
the uniqueness in Theorem \ref{theorem:1.1} holds. 
\end{proof}

	Next, we shall prove the nondegeneracy in $\Hrad(\Rd)$.

	\begin{proof}[Proof of nondegeneracy in Theorem \ref{theorem:1.1}]
From the uniqueness part, 
there exists $\wt{\omega}_\ast>0$ such that 
if $\omega > \wt{\omega}_\ast$, then \eqref{eq:1.1} admits a unique radial 
positive ground state and we denote it by $\Pho$. Our aim is to 
find $\omega_\ast \geq \wt{\omega}_\ast$ such that 
\begin{equation}\label{eq:4.29}
	{\rm Ker} \, L_{\Phi_\omega} |_{\Hrad(\Rd)} = \left\{ 0 \right\} 
	\quad {\rm for\ every} \ \omega > \omega_\ast. 
\end{equation}

In order to prove \eqref{eq:4.29}, 
we argue indirectly and suppose to the contrary that there exist 
$\{ \omega_n \}$ and $\{u_n\}$ such that 
	\[
		\wt{\omega}_\ast < \omega_n \to \infty, \quad 
		u_n \in H^1_{\rm rad} (\Rd) \setminus \left\{ 0 \right\}, 
		\quad L_{\Phon} u_n = 0.
	\]
Remark that we may assume $u_n: \Rd \to \R$ and $\| u_n \|_{L^\infty(\Rd)} = 1$ 
due to the linearity of $L_{\Phon}$ and elliptic regularity. 
Set 
	\[
		v_n(x) := u_n \left( M_n^{-\frac{2}{d-2}} x \right), \quad 
		M_n := \Phon (0) = \| \Phon \|_{L^\infty}. 
	\]
Then we observe that 
	\begin{equation}\label{eq:4.30}
		-\Delta v_n + \alpha_n v_n = 
		\left[ 
		p \beta_n \TPhon^{p-1} + \frac{d+2}{d-2} 
		\TPhon^{\frac{4}{d-2}} 		
		\right] v_n \quad {\rm in} \ \Rd, \quad 
		\| v_n \|_{L^\infty} = 1
	\end{equation}
where $\alpha_n := \omega_n M_{n}^{-\frac{4}{d-2}}$, 
$\beta_n := M_n^{p-1-\frac{4}{d-2}}$ and 
$\TPhon (x) := \Phon ( M_n^{-\frac{2}{d-2}} x )$. 
By Proposition \ref{proposition:3.1}, 
we can argue as in Lemma 4.2 to show that 
	\begin{equation}\label{eq:4.31}
		\text{$\{v_n\}$ is bounded in $\dHRd$}. 
	\end{equation}

Next, let us consider the Kelvin transform of $v_n$ and 
write $K[v_n]$ for it. Then $K[v_n]$ satisfies 
	\[
		-\Delta K[v_n] + \alpha_n \frac{1}{|x|^4} K[v_n] 
		= \left\{ p \frac{\beta_n}{|x|^{\gamma}} K[\TPhon]^{p-1} 
		+ \frac{d+2}{d-2} K[ \TPhon ]^{\frac{4}{d-2}} \right\} K [v_n],
	\]
where $\gamma=4-(d-2)(p-1)$. We remark that each $v_n$ has an exponential decay and 
this fact can be proved reasoning as for Lemma \ref{proposition:3.2}. 
Therefore, applying the argument in Lemma 4.3, 
we get the uniform decay estimate for $\{v_n\}$: 
	\begin{equation}\label{eq:4.32}
		\sup_{n \geq 1} \left| v_n(x) \right| \leq C_0 \left( 1 + |x| \right)^{-(d-2)}
		\quad {\rm for\ all} \ x \in \Rd. 
	\end{equation}

	By \eqref{eq:4.31} and \eqref{eq:4.32}, $\{v_n\}$ is bounded in $H^1(\Rd)$ and 
we may assume that there exists a $v_\infty \in \Hrad (\Rd)$ so that 
	\[
		v_n \rightharpoonup v_\infty \quad \text{weakly in $H^1(\Rd)$}. 
	\]
Using Proposition \ref{proposition:2.1}, \eqref{eq:4.30}, \eqref{eq:4.32} and elliptic regularity, we have 
	\[
	 \lim_{n\to \infty}v_n = v_\infty \quad \mbox{strongly in $C^2_{\rm loc}(\Rd)$}, \quad 1 = \| v_\infty \|_{L^\infty}, \quad
		-\Delta v_\infty = \frac{d+2}{d-2} W^{\frac{4}{d-2}} v_\infty \quad {\rm in} \ \Rd. 
	\]
Since $\| v_\infty \|_{L^\infty} = 1$ and $v_\infty \in \Hrad (\Rd)$, 
from \eqref{eq:1.11}, there exists a $\kappa \neq 0$ such that 
	\begin{equation}\label{eq:4.33}
		v_\infty = \kappa \Lambda W. 
	\end{equation}

Next, we consider $w_n(x) := x \cdot \nabla \TPhon (x)$. 
It is not difficult to check that $w_n$ satisfies 
\begin{equation}\label{eq:4.34}
-\Delta w_n + \alpha_n w_n = 
\left[  p \beta_n \TPhon^{p-1} 
+ \frac{d+2}{d-2} \TPhon^{\frac{4}{d-2}} \right] w_n 
+ 2 \left[ - \alpha_n \TPhon + \beta_n \TPhon^p 
+ \TPhon^{\frac{d+2}{d-2}} \right].
\end{equation}
Thus, multiplying \eqref{eq:4.30} by $w_n$ and \eqref{eq:4.34} by $v_n$, 
it follows from the integration by parts that 
\begin{equation}\label{eq:4.35}
\int_{\Rd}\left[ - \alpha_n \TPhon + \beta_n \TPhon^p 
+ \TPhon^{\frac{d+2}{d-2}} \right] v_n dx = 0.
\end{equation}
Recall that  
\begin{equation}\label{17/10/04/15:56}
-\Delta \TPhon + \alpha_n \TPhon = 
\beta_n \TPhon^p + \TPhon^{\frac{d+2}{d-2}} \quad {\rm in} \ \Rd. 
\end{equation}
Multiply \eqref{17/10/04/15:56} by $v_{n}$ and \eqref{eq:4.30} by $\TPhon$, and  then integrate them: putting these together, we obtain  
	\[
		\begin{aligned}
			\int_{\Rd} 
			\left[ \beta_n \TPhon^p + \TPhon^{\frac{d+2}{d-2}} \right] v_n dx 
			&= \int_{\Rd} \nabla \TPhon \cdot \nabla v_n + \alpha_n \TPhon v_n dx 
			\\
			&= \int_{\Rd} 
			\left[ 
			p \beta_n \TPhon^{p-1} + \frac{d+2}{d-2} 
			\TPhon^{\frac{4}{d-2}} 		
			\right] v_n \TPhon dx,
		\end{aligned}
\]
which implies 
\[
\int_{\Rd} \TPhon^{\frac{d+2}{d-2}} v_n d x 
= - \frac{(d-2)(p-1)}{4} \int_{\Rd} \beta_n \TPhon^p v_n dx. 
\]
Combining this with \eqref{eq:4.35}, we find
\[
\int_{\Rd} \TPhon v_n dx 
= \left[ 1 - \frac{(d-2)(p-1)}{4} \right] \frac{\beta_n}{\alpha_n} \int_{\Rd} 
\TPhon^p v_n dx.
\]
As $n \to \infty$, Corollary \ref{theorem:3.1}, Lemma \ref{proposition:4.1} and \eqref{eq:4.33} yield   
\[
\int_{\Rd} W \kappa \Lambda W d x 
= \left[ 1 - \frac{(d-2)(p-1)}{4} \right] 
\frac{2(p+1)}{4-(d-2)(p-1)} \frac{\|W\|_{L^2}^2}{\| W \|_{L^{p+1}}^{p+1}} 
\int_{\Rd} W^p \kappa \Lambda W d x.
\]
Since $\kappa \neq 0$, Lemma 4.5 gives a contradiction:
\[
- \| W \|_{L^2}^2 = - \left[ 1 - \frac{(d-2)(p-1)}{4} \right] \| W \|_{L^2}^2. 
\]
Thus, \eqref{eq:4.29} holds and we complete the proof of Theorem \ref{theorem:1.1}. 
	\end{proof}

\appendix

%%%%%%%%%%%%%%%%%%%%%%%%%%%%%%%%%%%%%%%%%%%%%%%%%%%%%%%%%%%%%%%%%%%%%%%%%%%%%%%

\section{Existence of ground state}\label{section:A}
In this section, we sketch the proof of Proposition \ref{proposition:1.1}. Since we restrict nonlinearities to combined power-type ones, the proof is much simpler than the general case dealt with in \cite{Zhang-Zou}. In particular, we can use a positive functional $\mathcal{I}_{\omega}$ given by  
\begin{equation}\label{eq:A.1}
\begin{split}
\mathcal{I}_{\omega}(u)
&:=
\mathcal{S}_{\omega}(u)-\frac{1}{p+1}\mathcal{N}_{\omega}(u)
\\[6pt]
&=
\frac{p-1}{2(p+1)} 
\Big\{  \|\nabla u \|_{L^{2}}^{2} + \omega \|u \|_{L^{2}}^{2}   \Big\} 
+
\frac{4-(d-2)(p-1)}{2d(p+1)} 
\|u \|_{L^{2^{*}}}^{2^{*}}.
\end{split}
\end{equation}
Moreover, we easily verify the following structures of $\mathcal{S}_{\omega}$ and $\mathcal{N}_{\omega}$ 
(cf. \cite[Chapter 4]{Willem}):

\noindent 
\textbullet~For any $u \in H^{1}(\mathbb{R}^{d})\setminus \{0\}$, 
there exists a unique $\lambda(u)>0$ such that 
\begin{equation}
\label{eq:A.2}
\mathcal{N}_{\omega}(\lambda u)
\left\{
\begin{array}{rcl}
>0 & \mbox{if}& 0< \lambda <\lambda(u),
\\
=0 & \mbox{if}& \lambda=\lambda(u),
\\
<0 & \mbox{if}& \lambda >\lambda(u).
\end{array} 
\right.
\end{equation}

\noindent
\textbullet~For any $u \in H^{1}(\mathbb{R}^{d})\setminus \{0\}$, 
\begin{equation}
\label{eq:A.3}
\text{the function $\lambda \mapsto \mathcal{I}_\omega (\lambda u)$ 
is non-decreasing in $[0,\infty)$}. 
\end{equation}

Next, we introduce several variational values: 
\begin{align}
\label{eq:A.4}
\sigma
&:=
\inf\{ \|\nabla u \|_{L^{2}}^{2} : \mbox{$u \in \dot{H}^{1}(\mathbb{R}^{d})$ 
with $\|u\|_{L^{2^{*}}}=1$}\},
\\[6pt]
\label{17/07/08/11:28}
m_{\omega}
&:=\inf\{ \mathcal{S}_{\omega}(u) : \mbox{$u\in H^{1}(\mathbb{R}^{d})\setminus \{0\}$ with $\mathcal{N}_{\omega}(u)=0$} \},
\\[6pt]
\nonumber
\widetilde{m}_{\omega}
&:=\inf \big\{\mathcal{I}_{\omega}(u) : 
\mbox{$u\in H^{1}(\mathbb{R}^{d})\setminus \{0\}$ with $\mathcal{N}_{\omega}(u)\le 0$} 
\big\}
.
\end{align}

	By a standard argument (cf. \cite[Chapter 4]{Willem}), 
it is known that a minimizer for $m_\omega$ becomes a ground state to \eqref{eq:1.1}. 
Hence, in order to prove Proposition \ref{proposition:1.1}, 
it suffices to show the existence of minimizer for $m_\omega$.

	We first state the relationship between $m_\omega$ and $\wt{m}_\omega$ 
	(cf. \cite[Proposition 1.2]{Akahori-Ibrahim-Kikuchi-Nawa1}):

\begin{lemma}\label{proposition:A.1}
Assume $d\ge 3$ and $1<p<\frac{d+2}{d-2}$. Then, for any $\omega>0$, we have the following:
\begin{enumerate}
\item[\emph{(i)}] 
$m_{\omega}=\widetilde{m}_{\omega}>0$
\item[\emph{(ii)}] 
Any minimizer for $\widetilde{m}_{\omega}$ is also a minimizer 
for $m_{\omega}$, and vice versa.
\end{enumerate}
\end{lemma}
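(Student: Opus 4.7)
The plan is to prove the two statements together by combining the Nehari rescaling property \eqref{eq:A.2} with the monotonicity property \eqref{eq:A.3} and the explicit positive expression for $\mathcal{I}_\omega$ in \eqref{eq:A.1}. The central observation is that $\mathcal{I}_\omega = \mathcal{S}_\omega - \frac{1}{p+1}\mathcal{N}_\omega$, so $\mathcal{I}_\omega(u)=\mathcal{S}_\omega(u)$ whenever $\mathcal{N}_\omega(u)=0$; moreover from \eqref{eq:A.1} one has the strict monotonicity $\lambda\mapsto \mathcal{I}_\omega(\lambda u)$ strictly increasing on $(0,\infty)$ for each fixed $u\ne 0$.

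\emph{Step 1 (the inequality $\widetilde{m}_\omega\le m_\omega$).} Every $u$ with $\mathcal{N}_\omega(u)=0$ is admissible for $\widetilde{m}_\omega$, and on the Nehari manifold $\mathcal{S}_\omega(u)=\mathcal{I}_\omega(u)$; taking the infimum gives the desired inequality.

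\emph{Step 2 (the inequality $m_\omega\le\widetilde{m}_\omega$).} Given any $u\in H^1\setminus\{0\}$ with $\mathcal{N}_\omega(u)\le0$, take the Nehari scaling $\lambda(u)>0$ from \eqref{eq:A.2}; since $\mathcal{N}_\omega(u)\le0$, we must have $\lambda(u)\le 1$. Then $\lambda(u)u$ is admissible for $m_\omega$, and using \eqref{eq:A.3},
\[
m_\omega\le \mathcal{S}_\omega(\lambda(u)u)=\mathcal{I}_\omega(\lambda(u)u)\le \mathcal{I}_\omega(u).
\]
Taking the infimum over admissible $u$ yields $m_\omega\le\widetilde{m}_\omega$.

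\emph{Step 3 (positivity).} For any $u$ with $\mathcal{N}_\omega(u)=0$, the identity $\|\nabla u\|_{L^2}^2+\omega\|u\|_{L^2}^2=\|u\|_{L^{p+1}}^{p+1}+\|u\|_{L^{2^*}}^{2^*}$ combined with Sobolev and Gagliardo--Nirenberg inequalities gives
\[
\|\nabla u\|_{L^2}^2+\omega\|u\|_{L^2}^2\lesssim \big(\|\nabla u\|_{L^2}^2+\omega\|u\|_{L^2}^2\big)^{(p+1)/2}+\big(\|\nabla u\|_{L^2}^2+\omega\|u\|_{L^2}^2\big)^{2^*/2},
\]
where both exponents are strictly greater than $1$. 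This forces $\|\nabla u\|_{L^2}^2+\omega\|u\|_{L^2}^2\ge c_\omega>0$, and then \eqref{eq:A.1} gives $\mathcal{I}_\omega(u)\ge \frac{p-1}{2(p+1)}c_\omega>0$. Consequently $m_\omega=\widetilde{m}_\omega\ge \frac{p-1}{2(p+1)}c_\omega>0$, completing (i).

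\emph{Step 4 (equivalence of minimizers).} If $u$ minimizes $m_\omega$, then $\mathcal{N}_\omega(u)=0$ makes $u$ admissible for $\widetilde{m}_\omega$, and $\mathcal{I}_\omega(u)=\mathcal{S}_\omega(u)=m_\omega=\widetilde{m}_\omega$, so $u$ minimizes $\widetilde{m}_\omega$. Conversely, let $u$ minimize $\widetilde{m}_\omega$; with $\lambda(u)\in(0,1]$ as above, $\mathcal{S}_\omega(\lambda(u)u)=\mathcal{I}_\omega(\lambda(u)u)\le\mathcal{I}_\omega(u)=\widetilde{m}_\omega=m_\omega$, so $\lambda(u)u$ minimizes $m_\omega$. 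It remains to show $\lambda(u)=1$: if $\lambda(u)<1$ then strict monotonicity of $\lambda\mapsto\mathcal{I}_\omega(\lambda u)$ yields $\mathcal{I}_\omega(\lambda(u)u)<\mathcal{I}_\omega(u)=\widetilde{m}_\omega$, contradicting the admissibility of $\lambda(u)u$ for $\widetilde{m}_\omega$ (which has $\mathcal{N}_\omega=0\le 0$). Hence $\lambda(u)=1$, i.e., $u$ itself is a minimizer for $m_\omega$.

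The only mildly delicate point is the positivity in Step 3, which requires that both nonlinear terms be super-quadratic in $H^1$ norm; this is guaranteed by $1<p<2^*-1$. Everything else is a direct bookkeeping exercise based on \eqref{eq:A.2}, \eqref{eq:A.3}, and the manifestly positive formula \eqref{eq:A.1}.
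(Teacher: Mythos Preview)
Your proof is correct and follows essentially the same approach as the paper's: both use the Nehari rescaling \eqref{eq:A.2} together with the monotonicity of $\lambda\mapsto\mathcal{I}_\omega(\lambda u)$ to compare $m_\omega$ and $\widetilde m_\omega$, establish positivity via Sobolev embedding applied to the constraint $\mathcal{N}_\omega(u)\le 0$, and rule out $\mathcal{N}_\omega(u)<0$ for a minimizer by the strict inequality $\mathcal{I}_\omega(\lambda_0 u)<\mathcal{I}_\omega(u)$ when $\lambda_0<1$. Your explicit observation that \eqref{eq:A.1} yields \emph{strict} monotonicity (the paper only states non-decreasing in \eqref{eq:A.3} but uses the strict version in its proof of (ii)) is a minor clarification, not a different argument.
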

\begin{proof}
We shall prove claim (i). 
Since $\mathcal{I}_\omega (u) = \mathcal{S}_\omega(u)$ for every $u \in H^1(\Rd)$ 
with $\mathcal{N}_\omega (u) = 0$, it is clear that $\wt{m}_\omega \leq m_\omega$. For the opposite inequality $m_\omega \leq \wt{m}_\omega$, 
fix any $u \in H^1(\Rd) \setminus \{0\}$ with $\mathcal{N}_\omega (u) \leq 0$. 
By \eqref{eq:A.2}, there exists a $\lambda \in (0,1]$ such that 
$\mathcal{N}_\omega (\lambda u) = 0$. By \eqref{eq:A.3}, 
	\[
		m_\omega \leq \mathcal{S}_\omega (\lambda u) = \mathcal{I}_\omega(\lambda u) 
		\leq \mathcal{I}_\omega (u),
	\]
which yields $m_\omega \leq \wt{m}_\omega$. Thus, $m_\omega = \wt{m}_\omega$. 
 It remains to prove that $\wt{m}_\omega >0$. Let $u \in H^{1}(\mathbb{R}^{d})\setminus \{0\}$ with $\mathcal{N}_{\omega}(u)\le 0$. Then, it follows from $\mathcal{N}_{\omega}(u)\le 0$ and Sobolev's inequality that 
\[
\min\{1, \omega \}
\|u\|_{H^{1}}^{2}
\lesssim  
\|u\|_{H^{1}}^{p+1}+\|u\|_{H^{1}}^{2^{*}}
.
\]
This implies that there exists a constant $c(\omega)>0$ such that $\|u\|_{H^{1}}^{2}\ge c(\omega)$ and therefore $\mathcal{I}_\omega (u) \gtrsim \min\{1,\omega\}c(\omega)$. Since $u$ is arbitrary, we find that $\wt{m}_\omega >0$.

Next, we shall prove claim {\rm (ii)}. Since $\mathcal{I}_{\omega}=\mathcal{S}_{\omega}-\frac{1}{p+1}\mathcal{N}_{\omega}$ and $m_{\omega}=\widetilde{m}_{\omega}$, it suffices to prove that $\mathcal{N}_{\omega}(\widetilde{Q}_{\omega})=0$ for all minimizer $\widetilde{Q}_{\omega}$ for $\widetilde{m}_{\omega}$. Suppose the contrary that there exists a minimizer $\widetilde{Q}_{\omega}$ for $\widetilde{m}_{\omega}$ such taht $\mathcal{N}_{\omega}(\widetilde{Q}_{\omega})<0$. Then, it follows from \eqref{eq:A.2} that there exists a unique $\lambda_{0}\in (0,1)$ such that $\mathcal{N}_{\omega}(\lambda_{0}\widetilde{Q}_{\omega})=0$. Furthermore, we have 
\[
\widetilde{m}_{\omega}\le \mathcal{I}_{\omega}(\lambda_{0}\widetilde{Q}_{\omega})
< \mathcal{I}_{\omega}(\widetilde{Q}_{\omega})=\widetilde{m}_{\omega},
\]
which is a contradiction. Thus, $\mathcal{N}_{\omega}(\widetilde{Q}_{\omega})=0$. 
\end{proof}

	Next, we state a key inequality to show the existence of minimizer for $m_\omega$ (cf. \cite[Lemma 2.2]{Zhang-Zou}):

\begin{lemma}\label{proposition:A.2}
Assume that $d\ge 3$ and $3< p<5$, or $d\ge 4$ and $1<p<\frac{d+2}{d-2}$. Then, the following estimate holds 
\begin{equation}\label{eq:A.5}
m_{\omega}< \frac{1}{d}\sigma^{\frac{d}{2}} 
= \frac{1}{2} \| \nabla W \|_{L^2}^2 - \frac{1}{2^\ast} \| W \|_{L^{2^\ast}}^{2^\ast} 
= \frac{1}{d} \| \nabla W \|_{L^2}^2 
.
\end{equation}
\end{lemma}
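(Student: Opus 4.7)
The plan is to construct a Brezis--Nirenberg-type test function $u_\varepsilon$, estimate $\sup_{\lambda>0}\mathcal{S}_\omega(\lambda u_\varepsilon)$ asymptotically as $\varepsilon\to 0$, and show that this supremum drops strictly below $\tfrac{1}{d}\sigma^{d/2}$. By \eqref{eq:A.2}--\eqref{eq:A.3}, for every $u\in H^1(\Rd)\setminus\{0\}$ the map $\lambda\mapsto \mathcal{S}_\omega(\lambda u)$ attains its unique maximum at the Nehari projection $\lambda(u)$, where $\mathcal{N}_\omega(\lambda(u)u)=0$, so $m_\omega\le \mathcal{S}_\omega(\lambda(u)u)$; therefore it suffices to exhibit a single $u_\varepsilon$ with $\mathcal{S}_\omega(\lambda(u_\varepsilon)u_\varepsilon)<\tfrac{1}{d}\sigma^{d/2}$.

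Fix a cutoff $\eta\in C_c^\infty(\Rd)$ with $\eta\equiv 1$ on $B_1$ and $\eta\equiv 0$ outside $B_2$, and set
\[
u_\varepsilon(x):=\eta(x)\,\varepsilon^{-(d-2)/2}\,W(x/\varepsilon),\qquad 0<\varepsilon\le 1.
\]
Standard computations give, as $\varepsilon\to 0$,
\begin{align*}
\|\nabla u_\varepsilon\|_{L^2}^2 &= \sigma^{d/2}+O(\varepsilon^{d-2}),\quad
\|u_\varepsilon\|_{L^{2^*}}^{2^*} = \sigma^{d/2}+O(\varepsilon^d),\\
\|u_\varepsilon\|_{L^{p+1}}^{p+1}&= c_{d,p}\,\varepsilon^{\alpha}+o(\varepsilon^{\alpha}),\qquad \alpha:=d-\tfrac{(d-2)(p+1)}{2}\in(0,d-2),
\end{align*}
together with the familiar dimension-dependent $L^2$ estimate $\|u_\varepsilon\|_{L^2}^2\asymp \varepsilon^2$ for $d\ge 5$, $\varepsilon^2|\log\varepsilon|$ for $d=4$, and $\varepsilon$ for $d=3$. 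The hypotheses of the lemma are tailored precisely so that $\alpha$ is strictly smaller than both $d-2$ and the exponent carried by $\|u_\varepsilon\|_{L^2}^2$: an elementary check shows $\alpha<2$ iff $p>1$ (which handles $d\ge 4$), while $\alpha<1$ iff $p>d/(d-2)$, which for $d=3$ forces $p>3$.

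Writing $\lambda_\varepsilon:=\lambda(u_\varepsilon)=1+\delta_\varepsilon$ and linearising $\mathcal{N}_\omega(\lambda_\varepsilon u_\varepsilon)=0$ about $\lambda=1$ gives
\[
\delta_\varepsilon = \frac{\omega\|u_\varepsilon\|_{L^2}^2-\|u_\varepsilon\|_{L^{p+1}}^{p+1}}{(2^*-2)\sigma^{d/2}}+(\text{higher order}),
\]
after which a direct expansion of $\mathcal{S}_\omega(\lambda_\varepsilon u_\varepsilon)$, exploiting the algebraic identity $\tfrac12-\tfrac{1}{2^*}=\tfrac1d$ that collapses the critical contributions, yields
\[
\mathcal{S}_\omega(\lambda_\varepsilon u_\varepsilon) = \tfrac{1}{d}\sigma^{d/2} + \tfrac{\omega}{2}\|u_\varepsilon\|_{L^2}^2 -\tfrac{1}{p+1}\|u_\varepsilon\|_{L^{p+1}}^{p+1}+O(\varepsilon^{d-2})+o(\varepsilon^\alpha).
\]
Under the exponent restrictions above, the negative $L^{p+1}$ contribution of order $\varepsilon^\alpha$ strictly dominates both the positive $L^2$ correction and the critical error $O(\varepsilon^{d-2})$, so $\mathcal{S}_\omega(\lambda_\varepsilon u_\varepsilon)<\tfrac1d\sigma^{d/2}$ for $\varepsilon$ sufficiently small, whence \eqref{eq:A.5}.

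The main technical point is the simultaneous bookkeeping of the four exponents $\{\alpha,\,d-2,\,d,\text{ and the $L^2$-exponent}\}$ as $\varepsilon\to 0$; once this ordering is verified, the choice of test function and the expansion are classical. The use of the cutoff $\eta$ is forced in dimensions $d=3,4$ because $W\notin L^2(\Rd)$ there, and it is the slower decay of the $L^2$ norm in dimension three that is responsible for the stronger exponent requirement $p>3$ in the hypothesis.
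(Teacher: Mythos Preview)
Your argument is correct and follows the same Brezis--Nirenberg strategy as the paper: both use a truncated Talenti bubble and compare the exponents carried by $\|u_\varepsilon\|_{L^{p+1}}^{p+1}$, $\|u_\varepsilon\|_{L^2}^2$, and the $O(\varepsilon^{d-2})$ critical error. The only tactical difference is how the Nehari projection is handled: the paper bounds $\mathcal{S}_\omega(\tau_{\varepsilon,0}V_\varepsilon)$ from above by the maximum of the ``critical-only'' functional $y_\varepsilon$ and then shows $\tau_{\varepsilon,0}\ge \tfrac12$ for small $\varepsilon$, whereas you Taylor-expand $\mathcal{S}_\omega(\lambda u_\varepsilon)$ around $\lambda=1$ and use that the correction enters at order $\delta_\varepsilon^2=o(\varepsilon^\alpha)$; both devices are standard and lead to the same inequality.
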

\begin{proof}
Let $\chi$ be an even smooth function on $\mathbb{R}$ such that $\chi(r)=1$ for $0\le r\le 1$, $\chi(r)=0$ for $r\ge 2$, 
and $\chi$ is non-increasing on $[0,\infty)$. Then, we define 
\[
W_\e(x) := \varepsilon^{-\frac{d-2}{4}}W \left( \frac{x}{\sqrt{\varepsilon}} \right)
	=
	\varepsilon^{\frac{d-2}{4}}\left(\varepsilon+\frac{|x|^{2}}{d(d-2)}\right)^{-\frac{d-2}{2}}, 
\quad V_{\varepsilon}(x)
:=
\chi(|x|)W_{\varepsilon}(x).
\]
Then, we can verify that 
$\sigma^{\frac{d}{2}} = \| \nabla W_\e \|_{L^2}^2 = \| W_\e \|_{L^{2^\ast}}^{2^\ast} $ and 
\begin{align}
\label{17/07/08/13:27}
\|\nabla V_{\varepsilon}\|_{L^{2}}^{2}
&=
\sigma^{\frac{d}{2}}+O(\varepsilon^{\frac{d-2}{2}})
,
\\[6pt]
\label{17/07/08/14:29}
\| V_{\varepsilon} \|_{L^{2^{*}}}^{2^{*}}
&=
\sigma^{\frac{d}{2}}+O(\varepsilon^{\frac{d}{2}})
.
\end{align}
Moreover, we find that 
\begin{equation}\label{17/07/08/20:47}
\|V_{\varepsilon}\|_{L^{q+1}}^{q+1} 
=
\left\{ \begin{array}{rcl}
O (\varepsilon^{\frac{2d-(d-2)(q+1)}{4}})
&\mbox{if}&  
\frac{2}{d-2} < q, 
\\[6pt]
O(\varepsilon^{\frac{d}{4}}|\log{\varepsilon}|)
&\mbox{if}& 
q=\frac{2}{d-2},
\\[6pt]
O(\varepsilon^{\frac{(d-2)(q+1)}{4}})
&\mbox{if}&
0< q< \frac{2}{d-2}
.
\end{array} \right.
\end{equation}

Next, for a given $\varepsilon\in (0,1)$, we introduce a function $y_{\varepsilon} \colon (0,\infty) \to \mathbb{R}$ as 
\[
y_{\varepsilon}(t):=\frac{1}{2}t^{2} \big\{ 
\|\nabla V_{\varepsilon}\|_{L^{2}}^{2}
+
\omega \|V_{\varepsilon}\|_{L^{2}}^{2}
\big\} 
-\frac{t^{2^{*}}}{2^{*}} 
\|V_{\varepsilon}\|_{L^{2^{*}}}^{2^{*}}
.
\]
It is easy to verify that the function $y_{\varepsilon}$ attains its maximum only at the point
\[
\tau_{\varepsilon, \max}:=\frac{\big\{ 
\|\nabla V_{\varepsilon}\|_{L^{2}}^{2}
+
\omega \|V_{\varepsilon}\|_{L^{2}}^{2}
\big\}^{\frac{d-2}{4}} }{ \|V_{\varepsilon}\|_{L^{2^{*}}}^{\frac{d}{2}}
}.
\]
It follows from the definition of $\sigma$ (see \eqref{eq:A.4}), \eqref{17/07/08/13:27} and \eqref{17/07/08/14:29} that
\[
\frac{\|\nabla V_{\varepsilon}\|_{L^{2}}^{2}}{\|V_{\varepsilon}\|_{L^{2^{*}}}^{2}}
=
\frac{\sigma^{\frac{d}{2}} + O( \varepsilon^{ \frac{d-2}{2}} )}{\sigma^{\frac{d-2}{2}}+O(\varepsilon^{\frac{d}{2}})}
=
\sigma +  O(\varepsilon^{\frac{d-2}{2}}).
\]
Moreover, we see from \eqref{17/07/08/13:27} and \eqref{17/07/08/20:47} that 
\[
\frac{\|V_{\varepsilon}\|_{L^{2}}^{2}}{\|\nabla V_{\varepsilon}\|_{L^{2}}^{2}}
=
\left\{ \begin{array}{rcl} 
O(\varepsilon^{\frac{1}{2}})
&\mbox{if}& d=3,
\\[6pt]
O( \varepsilon|\log{\varepsilon}| )
&\mbox{if}& d=4,
\\[6pt]
O( \varepsilon)
&\mbox{if}& d\ge 5.
\end{array}\right.
\]
Hence, we find that 
\begin{equation}\label{17/07/08/16:06}
\begin{split}
y(\tau_{\varepsilon,\max})
&=
\frac{1}{d} 
\frac{\|\nabla V_{\varepsilon}\|_{L^{2}}^{d}
}{\|V_{\varepsilon}\|_{L^{2^{*}}}^{d}}
\bigg( 1+ \omega \frac{\|V_{\varepsilon}\|_{L^{2}}^{2}}{ \|\nabla V_{\varepsilon}\|_{L^{2}}^{2}}
\bigg)^{\frac{d}{2}}
\\[6pt]
&=
\left\{ \begin{array}{rcl}
\frac{1}{d}\sigma^{\frac{d}{2}} + O(\varepsilon^{\frac{1}{2}})
&\mbox{if}& d=3,
\\[6pt]
\frac{1}{d}\sigma^{\frac{d}{2}} + O(\varepsilon |\log{\varepsilon}|)
&\mbox{if}& d=4,
\\[6pt]
\frac{1}{d}\sigma^{\frac{d}{2}} + O(\varepsilon)
&\mbox{if}& d\ge 5,
\end{array} \right. 
\end{split}
\end{equation}

On the other hand, for each $\varepsilon\in (0,1)$, there exists $\tau_{\varepsilon,0}>0$ such that $\mathcal{N}_{\omega}(\tau_{\varepsilon,0}V_{\varepsilon})=0$. 
\par 
Now, we assume that $d=3$ and $3< p< 5$. Then, it follows from \eqref{17/07/08/13:27}, \eqref{17/07/08/14:29} and \eqref{17/07/08/20:47} that 
\[
\begin{split}
0
&=\mathcal{N}_{\omega}(\tau_{\varepsilon,0}V_{\varepsilon})
\\[6pt]
&=
\omega 
\tau_{\varepsilon,0}^{2}O(\varepsilon^{\frac{1}{2}})
+
\tau_{\varepsilon,0}^{2}
\{\sigma^{\frac{3}{2}}+O(\varepsilon^{\frac{1}{2}})\}
-
\tau_{\varepsilon,0}^{p+1} 
O(\varepsilon^{\frac{5-p}{4}})
-
\tau_{\varepsilon,0}^{6} 
\{\sigma^{\frac{3}{2}}+O(\varepsilon^{\frac{3}{2}})\}.
\end{split}
\]
Divide both sides above by $\tau_{\varepsilon,0}^{2}\{\sigma^{\frac{3}{2}}+O(\varepsilon^{\frac{3}{2}})\}$. Then, we obtain  
\[
\tau_{\varepsilon,0}^{4}
=
\omega O(\varepsilon^{\frac{1}{2}})
+
1+O(\varepsilon^{\frac{1}{2}})
-
\tau_{\varepsilon,0}^{p-1}O(\varepsilon^{\frac{5-p}{4}})
.
\]
Since $p-1 < 4$, this implies that for any $\omega>0$, 
\begin{equation}\label{17/07/08/21:59}
\lim_{\varepsilon \to 0}\tau_{\varepsilon,0} \ge \frac{1}{2}.
\end{equation}
Furthermore, it follows from the definition of $m_{\omega}$ (see \eqref{17/07/08/11:28}), \eqref{17/07/08/20:47}, \eqref{17/07/08/16:06}, \eqref{17/07/08/21:59} and $2< p <2^{*}-1$ that 
\[
\begin{split}
m_{\omega} 
&\le \mathcal{S}_{\omega}(\tau_{\varepsilon,0} V_{\varepsilon})
=
y_{\varepsilon}(\tau_{\varepsilon,0})
-
\frac{\tau_{\varepsilon,0}^{p+1}}{p+1}
\|V_{\varepsilon}\|_{L^{p+1}}^{p+1}
\\[6pt]
&\le 
y_{\varepsilon}(\tau_{\varepsilon,\max})
-
\frac{\tau_{\varepsilon,0}^{p+1}}{p+1}
c_{1} \varepsilon^{\frac{5-p}{4}}
=
\frac{1}{3}\sigma^{\frac{3}{2}} + O(\varepsilon^{\frac{1}{2}})
-c_{2} \varepsilon^{\frac{5-p}{4}}
\end{split} 
\]
for some positive constants $c_{1}$ and $c_{2}$ depending only on $p$. Thus, we find that if $p>3$ and $\varepsilon$ is sufficiently small depending only on $p$ and $\omega$, then 
\[
m_{\omega} < \frac{1}{3}\sigma^{\frac{3}{2}}.
\]
Similarly, we can prove that if $d\ge 4$, then claim \eqref{eq:A.5} is true. 
\end{proof}

Now, we are ready to prove Proposition \ref{proposition:1.1}.
\begin{proof}[Proof of Proposition \ref{proposition:1.1}]
By Lemma \ref{proposition:A.1}, it suffices to prove the existence of minimizer for $\widetilde{m}_{\omega}$. 
To this end, we consider  a minimizing sequence $\{u_{n}\}$ for $\widetilde{m}_{\omega}$. 
We denote the Schwarz symmetrization of $u_{n}$ by $u_{n}^{*}$. 
Note that $\| \nabla u_n^\ast \|_{L^2} \leq \| \nabla u_n \|_{L^2}$ and 
$\| u_n^\ast \|_{L^q} = \| u_n \|_{L^q}$ hold for each $q \in [2,2^\ast]$. 
For example, see \cite{Lieb-Loss}. From these properties, we have
\begin{align}
\label{eq:A.6}
&\mathcal{N}_{\omega}(u_{n}^{*})\le 0
\quad 
\mbox{for any $n \ge 1$},
\\[6pt]
\label{eq:A.7}
&\lim_{n\to \infty}\mathcal{I}_{\omega}(u_{n}^{*})=\widetilde{m}_{\omega},
\\[6pt]
\nonumber
& 
\left\| u_{n}^{*} \right\|_{H^{1}} < \infty
\quad 
\mbox{for any $n \ge 1$}.
\end{align}
Since $\{u_{n}^{*}\}$ is  radially symmetric and bounded in $H^{1}(\mathbb{R}^{d})$, there exists a radially symmetric function $Q \in H^{1}(\mathbb{R}^{d})$ such that, passing to some subsequence,  
\begin{equation}
\label{eq:A.8}
\lim_{n\to \infty}u_{n}^{*}=Q 
\quad 
\mbox{weakly in $H^{1}(\mathbb{R}^{d})$ and 
strongly in $L^{p+1}(\Rd)$}. 
\end{equation}
We shall show that $Q$ becomes a minimizer for $m_{\omega}$.

	We first show $Q \not\equiv 0$. 
Suppose the contrary that $Q \equiv 0$. 
Then, it follows from (\ref{eq:A.6}) and (\ref{eq:A.8}) that, passing to some subsequence,  
\begin{equation}\label{eq:A.9}
0\ge \lim_{n\to \infty}\mathcal{N}_{\omega}(u_{n}^{*})
\ge \lim_{n\to \infty}\left\{ 
\left\| \nabla u_{n}^{*} \right\|_{L^{2}}^{2}
-\left\| u_{n}^{*} \right\|_{L^{2^{*}}}^{2^{*}}
\right\}.
\end{equation}
If $\| \nabla u_n^* \|_{L^2} \to 0$, then $\| u_n^* \|_{L^q} \to 0$ for all $2<q \leq 2^\ast$.  
By \eqref{eq:A.6} and \eqref{eq:A.7}, one has $\| u_n^* \|_{L^2} \to 0$ and 
$\widetilde{m}_{\omega} = 0$. However, this contradicts $\widetilde{m}_{\omega} > 0$ (see Lemma \ref{proposition:A.1}). Therefore, we may assume $\lim_{n\to\infty} \| \nabla u_n^* \|_{L^2} > 0$.

	Now, \eqref{eq:A.9} with the definition of $\sigma$ gives us
\[
\lim_{n\to \infty}\left\| \nabla u_{n}^{*} \right\|_{L^{2}}^{2}
\ge 
\sigma 
\lim_{n\to \infty}\left\| u_{n}^{*} \right\|_{L^{2^{*}}}^{2}
\ge 
\sigma \lim_{n\to \infty}
\left\| \nabla u_{n}^{*} \right\|_{L^{2}}^{\frac{2(d-2)}{d}}.
\]
From $\lim_{n\to\infty} \| \nabla u_n^* \|_{L^2} >0$, it follows that 
\begin{equation}\label{eq:A.10}
\sigma^{\frac{d}{2}}
\le \lim_{n\to \infty}\left\| \nabla u_{n}^{*} \right\|_{L^{2}}^2.
\end{equation}
Hence, we see from \eqref{eq:A.1}, \eqref{eq:A.7}, \eqref{eq:A.9} and \eqref{eq:A.10} that 
\[
\begin{split}
\widetilde{m}_{\omega}
&=
\lim_{n\to \infty}\mathcal{I}_{\omega}(u_{n}^{*})
\\[6pt]
&\ge 
\lim_{n\to \infty}
\left\{ 
\frac{p-1}{2(p+1)}\left\| \nabla u_{n}^{*} \right\|_{L^{2}}^{2}
+
\frac{4-(d-2)(p-1)}{2d(p+1)}\left\| u_{n}^{*} \right\|_{L^{2^{*}}}^{2^{*}}
\right\}
\\[6pt]
&\ge \frac{1}{d}\lim_{n\to \infty}\left\| \nabla u_{n}^{*} \right\|_{L^{2}}^{2}
\ge \frac{1}{d}\sigma^{\frac{d}{2}}.
\end{split}
\]
However, this contradicts \eqref{eq:A.5}. Thus, $Q \not\equiv 0$.

	Next, we shall show that $\mathcal{N}_{\omega}(Q)=0$. 
Using the Brezis-Lieb Lemma \cite{Brezis-Lieb}, we have
\begin{align}
\label{eq:A.11}
\mathcal{I}_{\omega}(u_{n}^{*})
-
\mathcal{I}_{\omega}(u_{n}^{*}-Q)
-
\mathcal{I}_{\omega}(Q)
&=o_{n}(1),
\\[6pt]
\label{eq:A.12}
\mathcal{N}_{\omega}(u_{n}^{*})-\mathcal{N}_{\omega}(u_{n}^{*}-Q)
-\mathcal{N}_{\omega}(Q)
&=o_{n}(1)
.
\end{align} 
Furthermore, \eqref{eq:A.11} together with \eqref{eq:A.7} and the positivity of $\mathcal{I}_{\omega}$ implies that
\begin{equation}\label{eq:A.13}
\mathcal{I}_{\omega}(Q)\le \widetilde{m}_{\omega}.
\end{equation}

	Let us suppose $\mathcal{N}_{\omega}(Q)<0$ and derive a contradiction. 
Note that \eqref{eq:A.13} implies that $\mathcal{I}_{\omega}(Q)=\widetilde{m}_{\omega}$. Moreover, it follows from \eqref{eq:A.2} that there exists a unique $\lambda_{0}\in (0,1)$ such that $\mathcal{N}_{\omega}(\lambda_{0}Q)=0$. Hence, we have  
\[
\widetilde{m}_{\omega}\le \mathcal{I}_{\omega}(\lambda_{0}Q)<\mathcal{I}_{\omega}(Q)=\widetilde{m}_{\omega}.
\]
This is a contradiction.

	Next, suppose that $\mathcal{N}_{\omega}(Q)>0$. 
Then, it follows from  \eqref{eq:A.6} and \eqref{eq:A.12} that $\mathcal{N}_{\omega}(u_{n}^{*}-Q)<0$ for any sufficiently large $n$. Hence, we  can take $\lambda_{n}\in (0,1)$ such that 
$\mathcal{N}_{\omega}(\lambda_{n}(u_{n}^{*}-Q))=0$. Furthermore, we see from \eqref{eq:A.7} and \eqref{eq:A.11} that  
\[
\begin{split}
\widetilde{m}_{\omega}
&\le 
\mathcal{I}_{\omega}( \lambda_{n} (u_{n}^{*}-Q))
\le \mathcal{I}_{\omega}(u_{n}^{*}-Q)
=\mathcal{I}_{\omega}(u_{n}^{*})
-
\mathcal{I}_{\omega}(Q)+o_{n}(1)
\\[6pt]
&= \widetilde{m}_{\omega} -
\mathcal{I}_{\omega}(Q)+o_{n}(1).
\end{split}
\]
Hence, we conclude that $\mathcal{I}_{\omega}(Q)=0$ and $Q \equiv 0$. 
However, this is a contradiction. Thus $\mathcal{N}_{\omega}(Q)=0$.

	Since $Q \not\equiv 0$ and $\mathcal{N}_{\omega}(Q)=0$, we have 
\begin{equation}\label{eq:A.15}
m_{\omega}
\le \mathcal{S}_{\omega}(Q)=\mathcal{I}_{\omega}(Q). 
\end{equation}
Moreover, it follows from \eqref{eq:A.11} and Proposition \ref{proposition:A.1} that 
\begin{equation}\label{eq:A.16}
\mathcal{I}_{\omega}(Q) 
\le 
\liminf_{n\to \infty}\mathcal{I}_{\omega}(u_{n}^{*})
\le \widetilde{m}_{\omega}=m_{\omega}.
\end{equation} 
Combining (\ref{eq:A.15}) and (\ref{eq:A.16}), we obtain $\mathcal{S}_{\omega}(Q)=\mathcal{I}_{\omega}(Q)=m_{\omega}$. Thus, we have proved that $Q$ is a minimizer for $m_{\omega}$. 
\end{proof}

% % % % % % % % % % % % % % % % % % % % % % % % % % % % % % % % %
% % % % % % % % % % % % % % % % % % % % % % % % % % % % % % % % %

\section{The Moser iteration}\label{section:B}

% % % % % % % % % % % % % % % % % % % % % % % % % % % % % % % % %
% % % % % % % % % % % % % % % % % % % % % % % % % % % % % % % % %

Here we state a result used in sections \ref{section:3} and \ref{section:4} 
to obtain the uniform decay estimates.

	\begin{proposition}\label{proposition:B.1} 
		Assume $d\ge 3$. 
		Let $a(x)$ and $b(x)$ be functions on $B_{4}$, 
		and let $ u \in H^1(B_4)$ be a weak solution to 
			\begin{equation*}\label{eq:B.1}
				-\Delta u + a(x) u = b (x) u \quad {\rm in} \ B_4.
			\end{equation*}
		Suppose that $a(x)$ and $u$ satisfy that 
			\begin{equation*}\label{eq:B.2}
				a(x) \geq 0 \quad {\rm for\ a.a.}\ x \in B_4, \quad 
				\int_{B_4} a(x) |u(x)v(x)| d x < \infty \quad 
				{\rm for\ each\ } v \in H^1_0(B_4). 
			\end{equation*}
		
		\begin{enumerate}
		\item[\emph{(i)}] 
			Assume that for any $\e \in (0,1)$, there exists $t_{\e}>0$ such that 
				\begin{equation*}\label{eq:B.3}
					\left\| \chi_{[|b|>t_{\e}]}b  \right\|_{L^{d/2}(B_4)} \leq \e
				\end{equation*}
		where $[|b|>t] := \left\{ x \in B_4 : |b(x)| > t \right\}$, and $\chi_A(x)$ 
		denotes the characteristic function of $A \subset \Rd$. 
		Then, for any $q \in (0,\infty)$, there exists a constant $C(d,q, t_{\e})$ such that 
			\[
				\| |u|^{q+1} \|_{H^1(B_1)} \leq C(d,q,t_{\e}) \| u \|_{L^{2^\ast}(B_4)}. 
			\]
		
		\item[\emph{(ii)}]
			Let $s>d/2$ and assume that $ b \in L^{s}(B_{4})$. 
			Then, there exists a constant $C(d,s, \|b\|_{L^{s}(B_{4})})$ such that 
			\[
				\| u \|_{L^\infty(B_1)} \le C \left(d,s, \|b\|_{L^{s}(B_{4})} \right) 
				\| u \|_{L^{2^\ast}(B_4)}.
			\]
	\end{enumerate}
	Here, the constants $C(d,q,t_{\e})$ and $C(d,s, \|b\|_{L^{s}(B_{4})})$ in \emph{(i)} and \emph{(ii)} remain bounded as long as $q$, $t_{\e}$ and $\|b\|_{L^{s}(B_{4})}$ are bounded. 
	\end{proposition}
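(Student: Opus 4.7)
The approach is the classical Moser iteration scheme, with two adaptations: the term $a(x)u$ is discarded using $a \ge 0$, and the borderline potential $b$ is handled either by splitting in (i) or by using the strict supercriticality $s>d/2$ in (ii). The integrability hypothesis on $au$ against $H^1_0(B_4)$ test functions legitimizes the formal computation below. The core step is to test the equation with $\eta^2 (|u|\wedge M)^{2q} u$, where $\eta \in C^\infty_c(B_4)$ is a cutoff and $M>0$ truncates $|u|^{2q}u$. Setting $v_M := (|u|\wedge M)^q u$ and applying Young's inequality to the cross term $\int \eta (|u|\wedge M)^{2q} u\, \nabla \eta \cdot \nabla u$, then discarding the nonnegative contribution of $a$, yields
\[
\int \eta^2 |\nabla v_M|^2\, dx \;\le\; C_q \int |\nabla \eta|^2 v_M^2\, dx + C_q \int |b|\, \eta^2 v_M^2\, dx,
\]
with $C_q$ polynomial in $q$. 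For (i), split $b = b_1 + b_2$ per the hypothesis so $|b_1|\le t_\e$ and $\|b_2\|_{L^{d/2}(B_4)}\le \e$; by H\"older and Sobolev, $\int b_2 \eta^2 v_M^2 \lesssim \e \|\nabla(\eta v_M)\|_{L^2}^2$, which for $\e$ small is absorbed into the left. Passing $M\to \infty$ via Fatou and monotone convergence gives
\[
\bigl\|\eta |u|^{q+1}\bigr\|_{H^1(B_4)}^2 \;\le\; C(d,q,t_\e) \int \bigl(|\nabla \eta|^2 + \eta^2\bigr) |u|^{2(q+1)}\, dx.
\]

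Chaining this estimate through nested cutoffs $\eta_k$ (equal to $1$ on $B_{r_k}$, supported in $B_{r_{k-1}}$, with $r_0=4,\ r_N = 1$) and the Sobolev embedding $H^1 \hookrightarrow L^{2^*}$ raises the integrability exponent by the fixed factor $\chi := 2^*/2 = d/(d-2) > 1$ at each step. Starting from $u \in L^{2^*}(B_4)$, finitely many iterations with $q_k$ chosen so that $2(q_k+1)$ matches the exponent reached at step $k-1$ covers any target exponent $2(q+1)$, yielding (i); the constant depends only on $d$, $q$, $t_\e$ and remains bounded when $q$ and $t_\e$ are bounded. For (ii), note that $b \in L^s(B_4)$ with $s > d/2$ implies the hypothesis of (i) with $t_\e$ chosen as a function of $\|b\|_{L^s}$ (apply H\"older or dominated convergence on the superlevel sets). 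The new gain from $s>d/2$ is that the H\"older exponent in the $b$-estimate satisfies $2s/(s-1) < 2^*$; interpolation between $L^2$ and $L^{2^*}$ followed by Young produces a Moser-type inequality
\[
\|u\|_{L^{\chi p}(B_r)} \;\le\; \bigl(C(d,s,\|b\|_{L^s})\cdot p \cdot (R-r)^{-2}\bigr)^{\alpha/p}\, \|u\|_{L^p(B_R)}
\]
for some fixed $\alpha > 0$, valid for all $p \ge 2^*$ and $1 \le r < R \le 4$. Iterating with $p_k = 2^* \chi^k$ and $r_k = 1 + 3\cdot 2^{-k}$, the logarithms of the constants form a convergent series since $\sum p_k^{-1} \log p_k < \infty$ and $\sum k/p_k < \infty$, producing $\|u\|_{L^\infty(B_1)} \le C(d,s,\|b\|_{L^s})\|u\|_{L^{2^*}(B_4)}$.

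The main technical obstacle is the precise book-keeping of constants. For (i), one must verify that the absorption parameter $\e$ chosen at each iteration step combines into a final constant with the asserted dependence on $q$ and $t_\e$ only. The more delicate point is in (ii), where the constant produced at each step must grow at worst polynomially in $p$ so that the infinite iteration converges; this requires tracking how $p$ enters the Young inequality splitting $L^2$ from $L^{2^*}$ contributions of $\eta v_M$, and verifying that the combinatorial factor $C_q$ in the basic estimate is likewise polynomial in $q$. A secondary issue is the induction justifying the truncated test function at each stage: the improved integrability of $u$ obtained at step $k-1$ is exactly what makes the dominated-convergence passage $M\to\infty$ at step $k$ legitimate.
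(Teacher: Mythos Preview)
Your proposal is correct and follows essentially the same approach as the paper. The paper does not actually supply a proof: it merely observes that $a\ge 0$ allows the term $\int a\,\varphi u^2$ to be discarded and then defers to the standard Moser iteration in Gilbarg--Trudinger and the Brezis--Kato splitting argument in \cite{BK-79,Liu-Liu-Wang}, which is exactly the scheme you have outlined (truncated test function $\eta^2(|u|\wedge M)^{2q}u$, absorption of the small-$L^{d/2}$ part of $b$ in (i), and the polynomial-in-$p$ Moser iteration for (ii)).
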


By the assumption of Proposition \ref{proposition:B.1}, notice that 
	\begin{equation*}
		\int_{B_4} a (x) \varphi u^2 dx \geq 0 \quad \text{for all } u \in H^1(B_4)
	\end{equation*}
where $\varphi \in C^\infty_0(B_4)$ with $\varphi \geq 0$. 
Using this fact and arguing as in \cite[Proof of Proposition 2.2]{Liu-Liu-Wang} and \cite{GT} (cf. \cite{BK-79}). 
we may prove Proposition \ref{proposition:B.1}. 
Therefore, we omit the details of the proof.

\section{The Pucci-Serrin condition}\label{section:C}

In this section, we give the range of space dimension $d$ and the subcritical power $p$ for which  \cite[Theorem 1]{PS} is applicable to the case of equation \eqref{eq:1.1}. 

\begin{proposition}\label{proposition:C.1}
Let $3 \leq d \leq 6$ and assume $ \frac{4}{d-2} \leq p < \frac{d+2}{d-2}$ with $1<p$. 
Then, for any $\omega >0$, the equation \eqref{eq:1.1} admits at most one positive radial solution. 
\end{proposition}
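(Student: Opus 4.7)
The plan is to rewrite equation \eqref{eq:1.1} as the autonomous semilinear problem $\Delta u + f(u) = 0$ on $\Rd$, $u(r) \to 0$ as $r \to \infty$, where
\[
f(u) := -\omega u + u^{p} + u^{(d+2)/(d-2)} \quad \text{for } u \geq 0,
\]
and to invoke the uniqueness theorem of \cite[Theorem~1]{PS}. That theorem requires certain structural hypotheses on $f$ together with the so-called Pucci--Serrin sign/monotonicity condition on a specific functional built from $f$ and $F(u) := \int_{0}^{u} f(s)\, ds$. Any $H^{1}$-solution of \eqref{eq:1.1} is automatically smooth and decays to $0$ at infinity by standard elliptic regularity (since $\omega > 0$), so the Pucci--Serrin framework is indeed applicable.

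First, I would verify the elementary structural conditions on $f$. Since $f(u)/u = -\omega + u^{p-1} + u^{4/(d-2)}$ is strictly increasing on $(0,\infty)$, $f$ has a unique positive zero $b > 0$, with $f < 0$ on $(0,b)$ and $f > 0$ on $(b,\infty)$. A parallel monotonicity argument applied to $2F(u)/u^{2}$ gives a unique positive zero $\beta > b$ of $F$ with the corresponding sign change. These place $f$ in the structural framework of \cite[Theorem~1]{PS}.

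Second, I would verify the Pucci--Serrin condition. A direct computation using $d/2^{\ast} = (d-2)/2$ shows that the critical-exponent term cancels identically in the Pohozaev-type functional
\[
G(u) := d\, F(u) - \tfrac{d-2}{2}\, u f(u) = -\omega u^{2} + \tfrac{d + 2 - (d-2) p}{2(p+1)}\, u^{p+1},
\]
so the required condition reduces to a comparison between two monomials, together with control on the relative location of the unique positive zero of $G$ and the zero $b$ of $f$. The hypothesis $p \geq 4/(d-2) = 2^{\ast} - 2$ is exactly what is needed so that the subcritical exponent $p-1$ dominates the critical-exponent difference $2^{\ast}-2 - (p-1)$ in the correct sense, while the restriction $d \leq 6$ enters through a numerical inequality on the coefficient $(d + 2 - (d-2) p)/(2(p+1))$ that precisely breaks down for $d \geq 7$ in this regime.

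The main obstacle will be this second step, in particular ensuring that the Pucci--Serrin condition holds \emph{uniformly in $\omega > 0$}, since as $\omega$ varies the zeros of $f$ and $G$ move in a correlated way, and handling the endpoint cases $p = 4/(d-2)$ and $d = 6$ where the inequality is borderline. Once the condition is established in the stated range, \cite[Theorem~1]{PS} yields at most one positive radial solution of \eqref{eq:1.1}, which is exactly the statement of Proposition~\ref{proposition:C.1}.
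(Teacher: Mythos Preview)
Your overall strategy---reduce to \cite[Theorem~1]{PS} after checking the structural hypotheses on $f$---is the same as the paper's. The structural checks in your first step (unique zero $b$ of $f$, unique zero $\beta$ of $F$) are fine. The problem lies in your second step: you have misidentified the Pucci--Serrin condition.

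The condition \cite[(2.5)]{PS} that must be verified is \emph{not} a sign condition on the Pohozaev-type functional $G(u)=dF(u)-\tfrac{d-2}{2}uf(u)$. It is the growth condition
\[
\frac{d}{du}\left[\frac{F(u)}{f(u)}\right]\ \ge\ \frac{d-2}{2d}
\qquad\text{for }u>0,\ f(u)\ne 0,
\]
which, after clearing denominators (recall $\tfrac{d-2}{2d}=\tfrac{1}{q+1}$ with $q=\tfrac{d+2}{d-2}$), is equivalent to
\[
g(u):=q\,f(u)^{2}-(q+1)\,F(u)\,f'(u)\ \ge\ 0.
\]
Expanding $g$ gives a five-term polynomial
\[
g(u)=A_{2}\,\omega^{2}u^{2}+A_{p+1}\,\omega u^{p+1}+A_{q+1}\,\omega u^{q+1}+A_{2p}\,u^{2p}+A_{p+q}\,u^{p+q},
\]
and the critical term does \emph{not} cancel here. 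The constraints in the statement enter through two of these coefficients: $A_{q+1}=\tfrac{(q-1)(q-2)}{2}\ge 0$ forces $q\ge 2$, i.e.\ $d\le 6$, and $A_{p+q}=\tfrac{(q-p)(p+1-q)}{p+1}\ge 0$ forces $p\ge q-1=\tfrac{4}{d-2}$. The remaining work is to handle the possibly negative coefficient $A_{p+1}$, which for $d=4,5,6$ is done by completing the square in the quadratic $A_{2}+A_{p+1}r+A_{2p}r^{2}$ with $r=u^{p-1}/\omega$ and checking a polynomial inequality in $p$.

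Your observation that $G(u)$ collapses to two monomials is correct but concerns a different object; it does not substitute for the verification of \cite[(2.5)]{PS}, and the heuristic explanations you give for the roles of $p\ge\tfrac{4}{d-2}$ and $d\le 6$ do not match what is actually happening.
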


\begin{proof}
In order to apply \cite[Theorem 1]{PS}, 
what we need to check is \cite[(2.5)]{PS}. 
In our case, this condition becomes 
	\begin{equation}\label{eq:C.1}
		\frac{d}{du} \left[ \frac{F(u)}{f(u)} \right] \geq \frac{d-2}{2d} \quad 
		{\rm for\ } u > 0, \ u \neq a
	\end{equation}
where 
	\[
		f(u) : = - \omega u + u^p + u^q, \quad 
		F(u) := - \frac{\omega}{2} u^2 + \frac{u^{p+1}}{p+1} + \frac{u^{q+1}}{q+1}, 
		\quad q := \frac{d+2}{d-2}, \quad f(a) = 0.
	\]

	We first rewrite \eqref{eq:C.1}. Since 
	\[
		\frac{d}{du}\left[ \frac{F(u)}{f(u)} \right] 
		= 1 - \frac{F(u) f'(u) }{ (f(u))^2 },
	\]
\eqref{eq:C.1} is equivalent to 
	\begin{equation}\label{eq:C.2}
		0 \leq q (f(u))^2 - (q+1) F(u) f'(u) =:g(u) \quad {\rm for\ all}\ u > 0.
	\end{equation}

	Next, we expand $g(u)$ as follows:
	\[
		g(u)
		=
		A_{2} \omega^{2} u^{2} 
		+
		A_{p+1} \omega u^{p+1}
		+
		A_{q+1} \omega u^{q+1}
		+
		A_{2p} u^{2p}
		+
		A_{p+q} u^{p+q}
	\]
where 
\begin{equation}\label{eq:C.3}
	\begin{aligned}
		A_{2}&:=\frac{q-1}{2},
		\\[6pt]
		A_{p+1}&:=\frac{(p^{2}-3p-2)q+p^{2}+p+2}{2(p+1)},
		\\[6pt]
		A_{q+1}&:=\frac{(q-1)(q-2)}{2},
		\\[6pt]
		A_{2p}&:=\frac{q-p}{p+1},
		\\[6pt]
		A_{p+q}&:=\frac{(q-p)(p+1-q)}{p+1}.
	\end{aligned}
\end{equation}
We remark that our assumption yields $ q - 1 \leq p < q$ and $2 \leq q$. 
Hence, it is easily seen that 
\begin{equation}\label{eq:C.4}
A_2 >0, \  A_{q+1} \geq 0, \  A_{2p} > 0, \ A_{p+q} \ge 0. 
\end{equation}

To show \eqref{eq:C.2}, we divide the arguments into two cases:

\smallskip

\noindent
{\bf Case 1:} $d=3$. 

\smallskip

	In this case, we have $q = 5$ and $ 4 \leq p < 5$, which implies 
	\[
		2 (p+1) A_{p+1} = 6p^2 - 14 p - 8 > 0 \quad 
		\text{for all $ 4 \leq p < 5$.}
	\]
By \eqref{eq:C.4}, \eqref{eq:C.2} holds. 

\smallskip

\noindent
{\bf Case 2:} $d=4, 5, 6$. 

\smallskip

In this case, we rewrite $g(u)$ as follows:
	\[
		g(u) = \omega^2 u^2 \left\{
		A_2 + A_{p+1} \frac{u^{p-1}}{\omega} + A_{2p} \left( \frac{u^{p-1}}{\omega} \right)^2 \right\}
		+ A_{q+1} \omega u^{q+1} + A_{p+q} u^{p+q}.
	\]
By \eqref{eq:C.4}, it suffices to show 
	\[
		Q(r) := A_2 + A_{p+1} r + A_{2p} r^2 \geq 0 \quad 
		\text{for each $r \geq 0$ and $q-1 \leq p < q$}. 
	\]

When $d=4$, one has $q = 3$ and 
	\[
		Q(r) = 1 + \frac{2(p^2-2p-1)}{p+1} r + \frac{3-p}{p+1} r^2.
	\]
If $p^{2}-2p-1 \ge 0$, then $Q(r)> 0$ for all $r\ge 0$. 
On the other hand, if $p^{2}-2p-1 < 0$, then 
we obtain $2 \leq p < 1 + \sqrt{2} = :p_0$ and simple computations give 
\begin{equation}\label{eq:C.5}
\min_{r\ge 0}{Q(r)}= 
1 - \frac{(p^2-2p-1)^2}{(p+1)(3-p)}.
\end{equation}
Set 
	\[
		h(p) := (p+1)(3-p) - (p^2 - 2p - 1)^2 
		= -p^4 + 4p^3 - 3p^2 -2 p +2.
	\]
Note that 
	\[
		h'(p) = -4p^3 + 12p^2 - 6p -2, \quad 
		h''(p) = -12p^2 + 24 p - 6 < 0 \quad {\rm in} \ [2,p_0].
	\]
We also observe that 
	\[
		h'(2) = 2 >0, \quad h'(p_0) = -2p_0 + 2 < 0, \quad 
		h(2) = 2 = h(p_0) > 0.
	\]
Hence, $h(p) > 0$ in $[2,p_0]$ and by \eqref{eq:C.5}, we have 
$\min_{r \geq 0} Q(r) \geq 0$ and \eqref{eq:C.2}.

	When $d=5$, we see
	\[
		Q(r) = \frac{1}{3} \left\{ 
		2 + \frac{5p^2 - 9p - 4}{p+1} r 
		+ \frac{7-3p}{p+1} r^2
		 \right\} .
	\]
Remark that $5p^2 -9p - 4 < 0$ is equivalent to 
$9 - \sqrt{161} < 10 p < 9 + \sqrt{161}$ and 
also that $ \frac{4}{3}< \frac{9+\sqrt{161}}{10} < \frac{7}{3}$. 
Hence, if $ \frac{9+\sqrt{161}}{10} \leq p < \frac{7}{3}$, 
then $Q(r) \geq 0$ for all $ r \geq 0$.

	When $\frac{4}{3} \leq p < \frac{9+\sqrt{161}}{10}$, observe that  
	\[
	\min_{r \geq 0} Q(r) 
	= \frac{1}{3} \left\{ 2 - \frac{(5p^2 - 9p - 4)^2}{4(7-3p)(p+1)} \right\}.
	\]
Since 
	\[
		\begin{aligned}
			8(7-3p)(p+1) - (5p^2-9p-4)^2 
			&= - 25p^4 + 90 p^3 - 65p^2 - 40p + 40
			\\
			&=(p-1)^2 ( -25p^2 +40p + 40 ) =: (p-1)^2 h(p),
		\end{aligned}
	\]
by $\frac{9+\sqrt{161}}{10} < \frac{9+13}{10} = \frac{11}{5}$ and 
$h(\frac{11}{5}) = 7 > 0$, we obtain 
$h(p) \geq 0$ for every $p \in [ \frac{4}{3} , \frac{9+\sqrt{161}}{10} ]$. 
Thus, $\min_{r\geq 0} Q(r) \geq 0$ and \eqref{eq:C.2} holds.

	When $d=6$, we observe 
	\[
		Q(r) = \frac{1}{2} \left\{ 1 - \frac{(3p+1)(2-p)}{p+1} r + \frac{2(2-p)}{p+1} r^2 \right\}, 
		\ 
		\min_{r \geq 0} Q(r) = 
		\frac{1}{2} \left\{ 	1 -  \frac{(2-p)(3p+1)^2}{8(p+1)} \right\}
			 .
	\]
Setting $h(p) := 8(p+1) - (2-p) (3p+1)^2$ for $1 \leq p < 2$, we obtain 
	\[
		h(1) = 0, \quad 
		h'(p) = 27p^2 - 24 p -3 \geq 0 \quad \text{for each $1 \leq p <2$}.
	\]
Hence, $h(p) \geq 0$ for all $1 \leq p < 2$ and \eqref{eq:C.2} holds. 
\end{proof}

\begin{remark}\label{remark:C.1}
When $d \geq 7$ and $ 1 < p < \frac{d+2}{d-2} = q$, 
condition \eqref{eq:C.1} is not satisfied for $\omega \gg 1$. 
In fact, we have $1 < q < 2$ and $A_{q+1} < 0$ in \eqref{eq:C.3}. 
Fix an $\alpha \in ( \frac{1}{q-1}, \frac{1}{p-1} )$ and observe that 
	\[
		 \alpha ( p + q) < 1 + \alpha ( q + 1), \quad 
		2 + 2\alpha < 1 + \alpha ( q + 1).
	\]
Noting also $1+(p+1) \alpha < 1+(q+1)\alpha$ and $2 p \alpha < (p+q) \alpha$, 
 we see that 
	\[
		\begin{aligned}
			g( \omega^{\alpha} ) 
			&= A_2 \omega^{2+2\alpha} + A_{p+1} \omega^{1+ (p+1)\alpha} 
			+ A_{q+1} \omega^{ 1 + (q+1) \alpha } + A_{2p} \omega^{2\alpha p} 
			+ A_{p+q} \omega^{(p+q)\alpha}
			\\
			&= \omega^{1+(q+1)\alpha} \left( A_{q+1} + o(1) \right).
		\end{aligned}
	\]
Since $A_{q+1} < 0$, we obtain $g(\omega^\alpha) < 0$ for $\omega \gg 1$ and 
\eqref{eq:C.1} is not satisfied. 

It is worth noting that for any $\Phi_\omega \in \Go$ we have 
$\Phi_\omega (0) = \| \Phi_\omega \|_{L^\infty} \sim \omega^{\frac{1}{p-1}}$ 
by Lemma \ref{proposition:4.1}. Hence, \eqref{eq:C.1} breaks down 
even in the interval $[0,\Phi_\omega(0)]$. 
\end{remark}

%%%%%%%%%%%%%%%%%%%%%%%%%%%%%%%%%%%%%%%%%%%%
%%%%%%%%%%%%%%%%%%%%%%%%%%%%%%%%%%%%%%%%%%%%
%%%%%%%%%%%%%%%%%%%%%%%%%%%%%%%%%%%%%%%%%%%%

\subsection*{Acknowledgement}

S.I. is partially supported by NSERC Discovery grant \# 371637-2014, and also acknowledges the kind hospitality of Tsuda University, Japan. The work of N.I. was supported by JSPS KAKENHI Grant Number JP16K17623 and 
JP17H02851. 
The work of H.K. was supported by JSPS KAKENHI Grant Number 
JP17K14223. The work of H.N. was supported by JSPS KAKENHI Grant Number 17H02859 and 15K13450.

%%%%%%%%%%%%%%%%%%%%%%%%%%%%%%%%%%%%%%%%%%%%%%%%%%%%%%%%%%%%%%%%%%%%%%%%%%%%%%%

\bibliographystyle{plain}

%%%%%%%%%%%%%%%%%

\end{document}